\newcommand{\Addresses}{{
		\bigskip
		\footnotesize
		
		\textsc{Department of Mathematics, Technion - Israel Institute of Technology, Haifa, Israel}\par\nopagebreak
		\textit{E-mail address:} \texttt{ofir.gor@technion.ac.il}
}}
\author{Ofir Gorodetsky\thanks{This project has received funding from the European Research Council (ERC) under the European Union's Horizon 2020 research and innovation programme (grant agreement no.~851318) and from the Israel Science Foundation (grant no.~2088/24). O.G. is the Rabbi Dr.~Roger Herst Faculty Fellow at the Department of Mathematics, Technion. We thank Brad Rodgers for constructive discussions around Conjecture~\ref{conj}.}}
\title{High and odd moments in the Erd\H{o}s--Kac theorem}
\date{}
\theoremstyle{plain}
\newtheorem{thm}{Theorem}[section]
\newtheorem{lem}[thm]{Lemma}  
\newtheorem{proposition}[thm]{Proposition}
\newtheorem{cor}[thm]{Corollary}
\newtheorem{claim}{Informal Claim}
\newtheorem{conj}{Conjecture}
\theoremstyle{remark}
\newtheorem*{remark*}{Remark}
\newcommand{\CC}{\mathbb{C}}
\newcommand{\PP}{\mathbb{P}}
\newcommand{\ZZ}{\mathbb{Z}}
\newcommand{\NN}{\mathbb{N}}
\newcommand{\RR}{\mathbb{R}}
\newcommand{\EE}{\mathbb{E}}
\newcommand*\diff{\mathop{}\!\mathrm{d}}
\newcommand{\AbsMom}{M}
\newcommand{\MMc}{a}
\newcommand{\Shift}{T}
\numberwithin{equation}{section}
\begin{document}

\maketitle
\begin{abstract}
Granville and Soundararajan showed that the $k$th moment in the Erd\H{o}s--Kac theorem is equal to the $k$th moment of the standard Gaussian distribution in the range $k=o((\log \log x)^{1/3})$, up to a negligible error term.
We show that their range is sharp: when $k/(\log \log x)^{1/3}$ tends to infinity, a different behavior emerges, and odd moments start exhibiting similar growth to even moments. For odd $k$ we find the asymptotics of the $k$th moment when $k=O((\log \log x)^{1/3})$, where previously only an upper bound was known. Our methods are flexible and apply to other distributions, including the Poisson distribution, whose centered moments turn out to be excellent approximations for the Erd\H{o}s--Kac moments.
\end{abstract}

\section{Introduction}
Let $\omega(n)=\sum_{p \mid n}1$ be the prime-divisor function. The celebrated Erd\H{o}s--Kac theorem \cite{Erdos} states that 
\begin{equation}\label{eq:ek} 
\frac{\omega(n)-\log \log n}{\sqrt{\log \log n}}      \xrightarrow[x \to \infty]{d}  G
\end{equation}
where $n$ is sampled uniformly at random from the positive integers up to $x$, and $G \sim N(0,1)$. The arrow indicates convergence in distribution. We may replace $\log \log n$ in \eqref{eq:ek} with $\log \log x$ because $(\log \log n - \log \log x)/\sqrt{\log \log x}$ tends to $0$ in probability.
The moments of the standard Gaussian distribution determine it uniquely, so to prove \eqref{eq:ek} it suffices to show that
\begin{equation}\label{eq:mom} \lim_{x \to \infty}\EE_{n \le x}\bigg(\frac{\omega(n)-\log \log x}{\sqrt{\log \log x}} \bigg)^k = \EE (G^k)
\end{equation}
holds for all $k \in \NN$. The case $k=1$ of \eqref{eq:mom} follows from a classical result of Mertens on the asymptotics of $\sum_{p \le x} 1/p$ \cite[Thm.~2.7]{MV}. The case $k=2$ is due to Tur\'{a}n \cite{Turan} and implies the Hardy--Ramanujan Theorem on the normal order of $\omega$. Delange \cite{Delange1953} proved \eqref{eq:mom} for all $k \in \NN$ (see \cite{Halberstam,Halberstam1956,Delange,DH} for generalizations to other additive functions).
Much later, Granville and Soundararajan used a sieve-inspired approach to study \eqref{eq:mom} with $k$ growing \cite[Thm.~1]{Granville2007}. They proved that
\begin{equation}\label{eq:GS}
\EE_{n \le x}\bigg(\frac{\omega(n)-\log \log x}{\sqrt{\log \log x}} \bigg)^k = \EE (G^k) + O\bigg( \EE 
(|G|^k) \frac{k^{3/2}}{\sqrt{\log \log x} }\bigg)
\end{equation}
holds uniformly for $k \le (\log \log x)^{1/3}$.
Delange  \cite{Delange1959} used the moment generating function of $\frac{\omega(n)-\log \log x}{\sqrt{\log \log x}}$ to show the following. For $k \in \NN$, 
\begin{equation}\label{eq:D}
	\EE_{n \le x}\bigg(\frac{\omega(n)-\log \log x}{\sqrt{\log \log x}} \bigg)^k =\frac{k!A_k(x)}{(\log \log x)^{k/2}}+O_k\bigg(\frac{1}{\log x}\bigg)
\end{equation}
holds where $A_k(x)$ is the coefficient of $z^k$ in the Maclaurin expansion of $F(e^z)(\log x)^{e^z-z-1}$ and $F$ is the entire function
\begin{equation}\label{eq:Fdef} F(z):=\frac{1}{\Gamma(z)}\prod_{p}\bigg(1-\frac{1}{p}\bigg)^z \bigg(1+\frac{z}{p-1}\bigg).
\end{equation}
He used \eqref{eq:D} to give a new proof of \eqref{eq:mom}. Ghosh \cite{Ghosh} estimated the $r$th absolute moment of $(\omega(n)-\log \log x)/\sqrt{\log \log x}$, $r$ being any real, positive number, and showed it tends to $\EE(|G|^r)$.
\subsection{Results}
We extend and improve \eqref{eq:GS} in terms of range and error term. We compute the asymptotics of the $k$th centered moment of $\omega$ in the range $k=O(\log \log x)$, and in particular for odd $k$.  This requires a genuine modification of the Sath\'e--Selberg method \cite{Sathe,Selberg}. Namely, we apply a suitable saddle point analysis to extract information about the moments from their generating function.

 While the formulas for the moments ae complicated, we can state the following clean theorem, comparing moments of $\omega$ with moments of a Poisson random variable, as well as with the moments of sums of Bernoulli random variables. Recall the Meissel--Mertens constant
	\begin{equation}\label{eq:bdef}
	\MMc = \gamma+\sum_{p} \left(\log\left(1-\frac{1}{p}\right)+\frac{1}{p}\right)>0
\end{equation}
which arises as the limit of $\sum_{p\le x} 1/p -\log \log x$ \cite[Thm.~2.7]{MV} and coincides with $F'(1)$ for $F$ in \eqref{eq:Fdef}.
\begin{thm}\label{thm:main}
Fix $A>0$. Suppose $x\ge 3$ and $2 \le k \le A\log \log x$. Define $r>0$ via 
\[ r(e^r-1)=  \frac{k}{\log \log x}.\] 
\begin{itemize} 
\item 
Let $X(\log \log x)\sim \mathrm{Poisson}(\log \log x)$. As $x\to \infty$,
\[\EE_{n \le x}(\omega(n)-\log \log x-\MMc)^k \sim  \frac{F(e^r)}{e^{r\MMc}} \EE(X(\log \log x)-\log \log x)^k\]
where $F$ is given in \eqref{eq:Fdef} and $\MMc$ is defined in \eqref{eq:bdef}.
\item Let $(X_p)_{p \le x}$ be independent random variables with $X_p \sim \mathrm{Bernoulli}(1/p)$. As $x\to \infty$,
\[	\EE_{n \le x}\bigg(\omega(n)-\sum_{p\le x} \frac{1}{p}\bigg)^k \sim  \frac{e^{-\gamma (e^r-1)}}{\Gamma(e^r)} \EE\bigg(\sum_{p \le x} X_p - \sum_{p\le x} \frac{1}{p}\bigg)^k.\]
\end{itemize}
If $k=o(\log \log x)$ then $r=o(1)$ and so $F(e^r)/e^{r\MMc}\sim e^{-\gamma (e^r-1)}/\Gamma(e^r) \sim 1$.
\end{thm}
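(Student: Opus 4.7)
The strategy is a Cauchy coefficient formula combined with a saddle-point analysis, using as input a uniform Sath\'e--Selberg asymptotic
\[
\sum_{n \le x} z^{\omega(n)} = x F(z)(\log x)^{z-1}\bigl(1 + O((\log x)^{-1})\bigr),
\]
valid for $z$ in any fixed compact subset of $\CC$ off the negative real axis. Setting $L := \log\log x$ and using $\EE_{n\le x}\,e^{s(\omega(n)-L-a)} \sim F(e^s)\, e^{-sa}\exp\bigl(L(e^s - s - 1)\bigr)$, Cauchy's formula expresses the centered moment as
\[
\EE_{n \le x}(\omega(n) - L - a)^k = \frac{k!}{2\pi i}\oint_{|s|=r} F(e^s)\, e^{-sa}\, \frac{\exp\bigl(L(e^s - s - 1)\bigr)}{s^{k+1}}\, ds + E,
\]
with $E$ smaller than the main term by $O((\log x)^{-1})$. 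Crucially, stripping $F(e^s)\,e^{-sa}$ from the integrand recovers exactly $\EE(X(L)-L)^k$, via the centered Poisson MGF.

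\textbf{Saddle point and Bernoulli comparison.} I would take the contour $|s|=r$ with $r(e^r-1)=k/L$; this is, up to an $O(1/k)$ correction, the critical point of the full exponent $\phi(s) = L(e^s - s - 1) - (k+1)\log s$, and $\phi''(r) \asymp Le^r$. Standard saddle-point analysis confines the mass of the integral to the arc $|\arg s| \ll 1/\sqrt{L}$, on which the holomorphic factor $F(e^s)\,e^{-sa}$ equals $F(e^r)\,e^{-ra}(1+o(1))$ because $r = O(1)$ throughout the range $k \le A L$; outside this arc the integrand is exponentially small. Comparing with the Poisson integral evaluated by the same saddle point yields the first bullet. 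For the second, the PGF $\prod_{p\le x}(1+(z-1)/p)$ of the Bernoulli sum is related to $F$ via Mertens' third theorem together with the identity $(1-\tfrac1p)^z(1+\tfrac{z}{p-1}) = (1-\tfrac1p)^{z-1}(1+\tfrac{z-1}{p})$, giving
\[
\prod_{p\le x}\Bigl(1+\tfrac{z-1}{p}\Bigr) = F(z)\,\Gamma(z)\, e^{\gamma(z-1)}(\log x)^{z-1}\bigl(1+o(1)\bigr)
\]
uniformly on compacta. Running the same Cauchy-and-saddle-point calculation for the centered Bernoulli moment, with $\sum_{p\le x}\frac1p = L+a+O(1/\log x)$ replacing $L+a$ (the discrepancy is harmlessly absorbed), produces the prefactor $F(e^r)\Gamma(e^r)e^{\gamma(e^r-1)-ra}$; dividing the two asymptotics gives the stated $e^{-\gamma(e^r-1)}/\Gamma(e^r)$. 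The closing sentence of the theorem is Taylor expansion at $r=0$, since then both prefactors tend to $1$.

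\textbf{Main obstacle.} The delicate point is the generating-function input: when $k \asymp L$ the saddle radius $r$ is $O_A(1)$ but $e^r$ can exceed the classical Sath\'e--Selberg radius $2$. One must either invoke the extensions beyond $|z|<2$ due to Selberg and Tenenbaum, or redo the contour-integral analysis of $\sum_{n\ge 1} z^{\omega(n)} n^{-s}$, ensuring that the $O((\log x)^{-1})$ relative error is uniform in $z$ and strong enough to survive after being divided by $\EE(X(L)-L)^k$ and multiplied by the prefactor $F(e^r)\,e^{-ra}$. Once the generating-function estimate is in place, the saddle-point evaluation and the Bernoulli comparison are routine.
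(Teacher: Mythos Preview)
Your strategy---Cauchy's formula on the centered MGF, saddle point at the $r$ solving $r(e^r-1)=k/L$, with the Sath\'e--Selberg expansion supplying the amplitude $F(e^s)$---is exactly the paper's. Your derivation of the Bernoulli prefactor via Mertens' third theorem and the identity $(1-1/p)^z(1+z/(p-1))=(1-1/p)^{z-1}(1+(z-1)/p)$ is in fact a little more direct than the paper's route, which detours through a comparison of two Poisson variables with parameters $L$ and $\sum_{p\le x}1/p$ (its Proposition~2.4) before matching with the Bernoulli model.

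There is, however, one genuine gap. Your claim that the mass of $\oint_{|s|=r}\exp(L(e^s-s-1))s^{-k-1}\,ds$ localises to a single short arc near $s=r$ is false through most of the range: on $|s|=r$ there is a second near-critical point at $s=-r$, whose contribution relative to the main one is $\exp\bigl(L(e^{-r}-e^r+2r)\bigr)\asymp\exp(-c\,k^{3/2}/\sqrt{L})$, and this is \emph{not} $o(1)$ when $k=O(L^{1/3})$. For odd $k$ the two contributions carry opposite signs, so the Poisson integral $I_P$ is smaller than $\int|K|$ by a factor $\asymp k^{3/2}/\sqrt{L}$; the crude bound $|I_\omega-G(r)I_P|\le\max_{|s|=r}|G(s)-G(r)|\cdot\int|K|=O(r)\int|K|$ then fails to be $o(|I_P|)$ for bounded odd $k$. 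The paper patches this by treating the ranges $k\le A L^{1/3}$ and $k^{3/2}/\sqrt L\to\infty$ with separate lemmas (its Corollary~3.2 and Lemma~3.3 versus Lemma~3.4). A clean fix inside your framework: since $F'(1)=a$, the amplitude satisfies $G(s):=F(e^s)e^{-sa}=1+O(s^2)$, hence $G(s)-1=O(r^2)$ uniformly on $|s|=r$; then $|I_\omega-I_P|\le Cr^2\int|K|$, which \emph{is} $o(|I_P|)$ for every $2\le k=o(L)$, while for $k\asymp L$ your single-saddle argument is valid because the secondary saddle is now exponentially suppressed. But this step has to be made explicit---as written, the sketch breaks exactly where the odd moments are most delicate.
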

The proof of Theorem~\ref{thm:main} proceeds by computing separately the asymptotics of the moments in the right- and left-hand sides, and then comparing. This is in the spirit of the work of Radziwi\l\l, comparing large deviations of $\omega(n)-\sum_{p \le x} 1/p$ with those of $\mathrm{Poisson}(\log \log x)-\log \log x$ \cite[Cor.~3]{radziwill2011structure} and of sums of centered Bernoulli-s \cite[Eq.~(1.5)]{radziwill2009large}.

Next we describe the behavior of  centered moments of $\omega$ in the range $k=o(\sqrt{\log \log x})$, including the behavior of odd moments. For $k \in \NN$ we shall use the following notation:
\[\mu_k:= \EE (G^k) = \begin{cases} \frac{k!}{2^{k/2}(k/2)!} & \text{if }k\text{ is even,} \\ 0 &\text{ otherwise,}\end{cases}\qquad  \AbsMom_k:=\frac{k!}{2^{k/2}\Gamma\left(\frac{k}{2}+1\right)} \asymp \EE 
(|G|^k).\]
\begin{thm}\label{thm:lowk}
Fix $A> 0$. Suppose $x\ge 3$. For $1\le k\le A(\log \log x)^{1/3})$ we have
\[\EE_{n \le x}\bigg(\frac{\omega(n)-\log \log x}{\sqrt{\log \log x}} \bigg)^k =\bigg(\mu_k +\bigg(\frac{k(k-1)}{6}+k\MMc\bigg)\frac{\mu_{k-1}}{\sqrt{\log \log x}}\bigg) \bigg(1+O_A\bigg( \frac{k^3}{\log \log x}\bigg)\bigg)\]
where $\MMc$ is as in \eqref{eq:bdef}.	For  $k\asymp (\log \log x)^{1/3}$ we have
	\begin{align*}
			\AbsMom_k^{-1} \EE_{n \le x}&\bigg(\frac{\omega(n)-\log \log x}{\sqrt{\log \log x}} \bigg)^k \\
			& \sim \frac{1}{2}\exp\bigg( \frac{k^{3/2}}{6(\log \log x)^{1/2}}\bigg) \bigg(1 +(-1)^k \exp\left( - \frac{k^{3/2}}{3(\log \log x)^{1/2}}\right)\bigg) \asymp 1
	\end{align*}
	as $x \to \infty$. If $k/(\log \log x)^{1/3} \to \infty$ while $k=o(\sqrt{\log \log x})$ then letting  $r=\sqrt{k/\log \log x}$ we have
\[		\AbsMom_k^{-1} \EE_{n \le x}\bigg(\frac{\omega(n)-\log \log x}{\sqrt{\log \log x}} \bigg)^k \sim \frac{1}{2} (\log x)^{e^{r}-(1+r+r^2/2)}\to \infty.\]
\end{thm}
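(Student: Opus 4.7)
The plan is to apply Theorem~\ref{thm:main} to reduce the problem to computing centered Poisson moments $c_k:=\EE(X(\lambda)-\lambda)^k$ (with $\lambda:=\log\log x$), and then to analyze those via the moment generating function. Since $r=O(\sqrt{k/\lambda})$ throughout Theorem~\ref{thm:lowk}, the factor $F(e^r)/e^{r\MMc}=1+O(r^2)=1+O(k/\lambda)$ is absorbed by the advertised error term in every regime. To move the centering from $\lambda+\MMc$ to $\lambda$ I will expand
\[\EE_{n\le x}(\omega(n)-\lambda)^k=\sum_{j=0}^{k}\binom{k}{j}\MMc^j\,\EE_{n\le x}(\omega(n)-\lambda-\MMc)^{k-j}\]
and invoke Theorem~\ref{thm:main} termwise. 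The $j=1$ summand supplies the $k\MMc\,\mu_{k-1}/\sqrt{\log\log x}$ correction in regime~1; in regimes~2 and 3 all shift terms ($j\ge 1$) turn out to be $O(\lambda^{-1/3})$ relative to the main term and disappear into $o(1)$.

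For the Poisson moments themselves, I will use the identity $\EE e^{t(X(\lambda)-\lambda)}=e^{\lambda(e^t-1-t)}$ and substitute $t=s/\sqrt\lambda$ to obtain
\[\frac{c_k}{\lambda^{k/2}}=k!\,[s^k]\exp\!\Bigl(\frac{s^2}{2}+\sum_{n\ge 3}\frac{s^n}{n!\,\lambda^{n/2-1}}\Bigr).\]
In \emph{regime~1} ($k\le A\lambda^{1/3}$), I will expand $\exp(s^3/(6\sqrt\lambda))=\sum_j(s^3/(6\sqrt\lambda))^j/j!$ and keep only $j\in\{0,1\}$; a short Stirling estimate on $k!/((k-3j)/2)!$ bounds the tail $j\ge 2$ and the contributions from $s^n$ with $n\ge 4$ by $O(k^3/\lambda)$ relative to the main term. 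This yields $\mu_k+k(k-1)\mu_{k-1}/(6\sqrt\lambda)$, and the binomial shift then completes this regime.

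In \emph{regime~2} ($k\asymp\lambda^{1/3}$) the cubic truncation in the exponent remains valid because the contribution from $s^n/\lambda^{n/2-1}$ for $n\ge 4$ is $O(s^4/\lambda)=O(\lambda^{-1/3})=o(1)$ at $s\asymp\sqrt k$. The sum over $j$ no longer collapses to two terms, but uniform Stirling approximations render each summand as $\sqrt 2\,(k/e)^{k/2}\alpha^j/j!$ where $\alpha=k^{3/2}/(6\sqrt\lambda)$. The requirement that $(k-3j)/2$ be a non-negative integer forces $j\equiv k\pmod 2$, so the sum equals $\cosh\alpha$ if $k$ is even and $\sinh\alpha$ if $k$ is odd; combined with $\AbsMom_k\sim\sqrt 2(k/e)^{k/2}$ this produces $\tfrac12\AbsMom_k[\exp(\alpha)+(-1)^k\exp(-\alpha)]$. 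For \emph{regime~3} ($k/\lambda^{1/3}\to\infty$, $k=o(\sqrt\lambda)$), $\alpha\to\infty$ and higher-order Taylor terms enter the exponent; I will switch to a Hayman-style saddle-point analysis of $k![s^k]e^{\lambda\phi(s/\sqrt\lambda)}$ with $\phi(t)=e^t-1-t$. The saddle $s_0=r\sqrt\lambda$ satisfies exactly $r(e^r-1)=k/\lambda$, and the resulting asymptotic $c_k/\lambda^{k/2}\sim k!\,e^{\lambda\phi(r)}/(r^k\lambda^{k/2}\sqrt{2\pi b})$ with $b\sim\lambda r(re^r+e^r-1)$ simplifies, using $k=\lambda r(e^r-1)$ and $\AbsMom_k\sim\sqrt 2(k/e)^{k/2}$, to $\tfrac12(\log x)^{e^r-1-r-r^2/2}$; this is valid because $k=o(\sqrt\lambda)$ forces $\lambda r^4=o(1)$.

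The hard part will be uniform control across the transition $k\sim\lambda^{1/3}$ and, relatedly, the natural appearance of the $(-1)^k$ factor in regime~2. In the series picture it comes from the parity constraint on $j$; in the saddle-point picture it corresponds to a secondary maximum of the integrand on the circle of integration at $\theta=\pi$, whose relative size is $\exp(-\lambda r^3/3)$ and which therefore contributes precisely when $\lambda r^3\asymp 1$ (regime~2) but is negligible when $\lambda r^3\to\infty$ (regime~3). Matching the two viewpoints rigorously and controlling the Stirling errors uniformly in $j\le k/3$ will be the main technical burden.
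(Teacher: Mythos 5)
Your reduction of the $\omega$-side to Theorem~\ref{thm:main} is where the proof breaks down. Theorem~\ref{thm:main} is a rate-free asymptotic (``$\sim$'' as $x\to\infty$, and only for $k\ge 2$), so applying it termwise in the binomial re-centering $\EE_{n\le x}(\omega(n)-\lambda)^k=\sum_j\binom{k}{j}\MMc^j\,\EE_{n\le x}(\omega(n)-\lambda-\MMc)^{k-j}$ can never yield the first display of Theorem~\ref{thm:lowk}: that display asserts a relative error $O_A(k^3/\log\log x)$, uniformly in $1\le k\le A(\log\log x)^{1/3}$ and for every $x\ge 3$, and for odd $k$ the ``main term'' $\big(\tfrac{k(k-1)}{6}+k\MMc\big)\mu_{k-1}/\sqrt{\log\log x}$ is itself a lower-order quantity, so you need quantitative control of the $\omega$-moments at relative precision $k^3/\log\log x$ — an unquantified $1+o(1)$ per term (with uniformity in the $k+1$ different exponents $k-j$ that the statement of Theorem~\ref{thm:main} does not supply, and with the $k-j\in\{0,1\}$ terms outside its scope altogether) cannot produce this. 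More structurally, your argument never engages the only arithmetic input available, Selberg's estimate \eqref{eq:selberg} for $\EE_{n\le x}s^{\omega(n)}$: in the paper, Theorem~\ref{thm:main} is itself obtained by comparing Propositions~\ref{prop:omega}--\ref{prop:kac2} (which are proved from \eqref{eq:selberg} via the generating-function lemmas) with the Poisson propositions, and Theorem~\ref{thm:lowk} follows directly from Propositions~\ref{prop:omega} and \ref{prop:kac} with $\Shift=0$ — the shift is absorbed into the generating function $e^{-z\Shift}M(z)$, so no binomial re-centering is needed. Treating Theorem~\ref{thm:main} as a black box therefore rests on a statement that already encodes the entire difficulty while discarding exactly the quantitative content that makes the first part of Theorem~\ref{thm:lowk} an improvement over \eqref{eq:GS}.

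The purely probabilistic half of your plan is sound and essentially re-derives the paper's Propositions~\ref{prop:babypoisson}--\ref{prop:po2} (equivalently Theorem~\ref{thm:lowkpo}): the truncation of the exponent at the cubic term with the parity constraint $j\equiv k\pmod 2$ producing $\cosh\alpha$ or $\sinh\alpha$ with $\alpha=k^{3/2}/(6\sqrt\lambda)$, the identification of the $(-1)^k$ term with the secondary saddle at $\theta=\pi$ of relative size $\exp(-c\lambda r^3)$, the saddle equation $r(e^r-1)=k/\lambda$, and the observation that $k=o(\sqrt\lambda)$ (i.e.\ $\lambda r^4=o(1)$) permits replacing the saddle exponent by $e^r-1-r-r^2/2$ with $r=\sqrt{k/\lambda}$ are all correct and parallel Lemmas~\ref{lem:tech1}--\ref{lem:technical2}. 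The fix for the $\omega$-side is to run that same coefficient analysis not on $e^{\lambda(e^z-z-1)}$ but on $H(z)=F(e^z)(\log x)^{e^z-z-1}$ furnished by \eqref{eq:selberg} (after disposing of the $O(1/\log x)$ error as in Corollary~\ref{cor:effectdelange}); this yields Propositions~\ref{prop:omega} and \ref{prop:kac} with $\Shift=0$, gives the explicit $O_A(k^3/\log\log x)$ error (with $b_1=F'(1)=\MMc$ supplying the $k\MMc\mu_{k-1}$ term without any re-centering), and then the three displays of Theorem~\ref{thm:lowk} follow by the Taylor simplifications $e^{\pm r}=1\pm r+r^2/2\pm r^3/6+O(r^4)$ and $F(e^{\pm r})=1+O(r)$.
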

In the next section we shall state our results in full, which include the behavior of the moments of $\omega$ for $k=O(\log \log x)$. Our next two theorems may be of interest to probability theorists. We have an analogue of Theorem~\ref{thm:lowk} for the Poisson distribution:
\begin{thm}\label{thm:lowkpo}
Let $\lambda >0$ and $X(\lambda)\sim \mathrm{Poisson}(\lambda)$. Fix $A>0$. For $1\le k \le A\lambda^{1/3}$,
	\begin{equation}\label{eq:ratio3po}
	\EE\bigg(\frac{X(\lambda)-\lambda}{\sqrt{\lambda}} \bigg)^k = \bigg( \mu_k + \frac{k(k-1)\mu_{k-1}}{6\sqrt{\lambda}}\bigg)\bigg(1+O_A\bigg( \frac{k^3}{\lambda}\bigg)\bigg).
	\end{equation}
For $k\asymp \lambda^{1/3}\to \infty$,
	\begin{equation}\label{eq:ratio2po}
			\AbsMom_k^{-1} \EE\bigg(\frac{X(\lambda)-\lambda}{\sqrt{\lambda}} \bigg)^k \sim \frac{1}{2}\exp\bigg( \frac{k^{3/2}}{6\lambda^{1/2}}\bigg) \bigg(1 +(-1)^k \exp\left( - \frac{k^{3/2}}{3\lambda^{1/2}}\right)\bigg) \asymp 1.
	\end{equation}
If $k/\lambda^{1/3} \to \infty$ while $k=o(\lambda^{1/2})$ then letting  $r=\sqrt{k/\lambda}$ we have
	\begin{equation}\label{eq:ratiopoisson}
		\AbsMom_k^{-1} \EE\bigg(\frac{X(\lambda)-\lambda}{\sqrt{\lambda}} \bigg)^k \sim \frac{1}{2} e^{\lambda(e^{r}-(1+r+r^2/2))}\to \infty.
	\end{equation}
\end{thm}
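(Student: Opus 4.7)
My plan is to treat part (a) by a direct Taylor-coefficient computation on the moment generating function, and parts (b), (c) by a saddle-point analysis of the same generating function. For part (a), write
\[\EE(X(\lambda)-\lambda)^k = k!\,[z^k]\,e^{\lambda(e^z-1-z)} = \sum_{j\ge 0}\frac{k!\,\lambda^j}{j!}\,[z^k](e^z-1-z)^j.\]
Since $e^z-1-z = (z^2/2)h(z)$ with $h(z) = 1+z/3+z^2/12+\cdots$, only $j \le k/2$ contributes, and $[z^k](e^z-1-z)^j = 2^{-j}[z^{k-2j}]h(z)^j$. The dominant term is $j = \lfloor k/2\rfloor$: for even $k$ it equals $\mu_k\lambda^{k/2}$, and for odd $k$ it equals $k(k-1)\mu_{k-1}\lambda^{(k-1)/2}/6$ (matching the theorem after dividing by $\lambda^{k/2}$). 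The next contribution ($j$ smaller by one) is elementarily bounded by $O(k^3/\lambda)$ times the leading one, and subsequent terms decay geometrically by roughly $k^2/\lambda$ per step, giving the uniform error $(1+O_A(k^3/\lambda))$.

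For parts (b) and (c) I apply saddle-point analysis to the contour integral
\[\EE(X(\lambda)-\lambda)^k = \frac{k!}{2\pi i}\oint \frac{e^{\lambda(e^z-1-z)}}{z^{k+1}}\,dz\]
on the circle $|z| = r$ with $r = \sqrt{k/\lambda}$. Writing $z = re^{i\theta}$ and $\phi(\theta) := \lambda(e^{re^{i\theta}} - 1 - re^{i\theta}) - ik\theta$, one computes $\phi'(0) = O(kr)$ and $\phi''(0) = -(k + r^2\lambda e^r)$, giving a standard Gaussian contribution around $\theta = 0$. The observation driving the odd-moment behavior is that $|e^{\phi(\theta)}|$ has a \emph{second} local maximum at $\theta = \pi$, whose value is smaller than at $\theta = 0$ by $\lambda(e^r - e^{-r} - 2r) = \lambda r^3/3 + O(\lambda r^5)$; although $\phi'(\pi) = -ik(1-e^{-r})$ is nonzero, a shifted-Gaussian Laplace-type estimate still applies there, and $\mathrm{Im}\,\phi(\pi) = -k\pi$ produces the parity factor $(-1)^k$. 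Using Stirling for $\AbsMom_k \sim \sqrt{2}(k/e)^{k/2}$ and $\lambda r^2 = k$, everything collapses to the master formula
\[\AbsMom_k^{-1}\EE\bigg(\frac{X(\lambda)-\lambda}{\sqrt\lambda}\bigg)^k \sim \tfrac12\Bigl(e^{\psi(r)} + (-1)^k e^{\psi(-r)}\Bigr), \qquad \psi(u) := \lambda(e^u - 1 - u - u^2/2).\]
For part (b), $\psi(r) = k^{3/2}/(6\sqrt\lambda) + o(1) = \Theta(1)$ and $\psi(-r) = -\psi(r) + o(1)$, so the rewriting $e^{\psi(r)} + (-1)^k e^{-\psi(r)} = e^{\psi(r)}(1 + (-1)^k e^{-2\psi(r)})$ gives \eqref{eq:ratio2po}. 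For part (c), $\psi(r) \to \infty$ forces the $\theta = \pi$ term to vanish, leaving $\tfrac12 e^{\psi(r)}$ as in \eqref{eq:ratiopoisson}.

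The main obstacle is the uniform saddle-point estimate for $k = O(\sqrt\lambda)$ with the required accuracy, in particular the handling of the non-saddle critical point at $\theta = \pi$: standard steepest-descent must be replaced by a Laplace-type estimate with a small imaginary shift, and one must separately control the residual integral over arcs bounded away from both $0$ and $\pi$ via a monotonicity/convexity estimate on $\mathrm{Re}\,\phi(\theta)$.
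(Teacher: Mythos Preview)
Your approach is correct and arrives at the same master formula as the paper's Proposition~\ref{prop:po}. For part~(a), your direct expansion $e^{\lambda(e^z-1-z)} = \sum_j \lambda^j(e^z-1-z)^j/j!$ is a more elementary route than the paper's: the paper packages the tail into a Cauchy-type bound (Corollary~\ref{cor:tech}) applied with $H_1(z) = e^{\lambda(e^z-z^2/2-z-1)}$; your term-by-term argument works too, though the ``geometric decay by $k^2/\lambda$'' is really $(k^3/\lambda)^m/(2m{+}1)!$-type decay, and that factorial denominator is what makes the sum $O_A(k^3/\lambda)$ uniformly up to $k\le A\lambda^{1/3}$.

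For parts~(b) and~(c) you and the paper do essentially the same saddle-point analysis on $|z|=r=\sqrt{k/\lambda}$, with different bookkeeping. You treat $\theta=0$ and $\theta=\pi$ as two separate \emph{approximate} saddles (each with $\phi'=O(kr)$) and handle the resulting shifted-Gaussian corrections by hand. The paper instead factors $H(z)=e^{\lambda z^2/2}H_1(z)$ so that $\theta=0$ becomes an \emph{exact} saddle for the dominant factor, and then uses the folding identity $\int_{-\pi}^{\pi}f(\theta)\,d\theta = \int_{-\pi/2}^{\pi/2}(f(\theta)+f(\theta+\pi))\,d\theta$ to merge both saddles into a single integral whose integrand carries the factor $H_1(re^{i\theta})+(-1)^kH_1(-re^{i\theta})$. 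This device sidesteps the approximate-saddle-with-shift issue you flag as the main obstacle, and reduces the arc control to the single inequality $\cos 2\theta - 1 \le -c\theta^2$ on $[-\pi/2,\pi/2]$. Your route works but is more laborious; the paper's factoring-and-folding trick (Lemma~\ref{lem:tech1}) buys a cleaner error analysis.
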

We have a result for the \textit{Poisson Binomial distribution}, i.e.~the sum of independent Bernoulli-s.
\begin{thm}\label{thm:lowkpobern}
Let $(p_i)_{i=1}^{n}$ be  reals in $[0,1]$, $(Y_i)_{i=1}^{n}$ be independent random variables with $Y_i \sim \mathrm{Bernoulli}(p_i)$ and $Y=\sum_{i=1}^{n}Y_i$. Denote  $\lambda=\sum_{i=1}^{n} p_i$ and let  $X(\lambda) \sim \mathrm{Poisson}(\lambda)$. For $2 \le k =o(\lambda/\max\{1,\sum_{i=1}^{n}p_i^2\}$),
\[ \EE(Y-\lambda)^k \sim \EE (X(\lambda)-\lambda)^k.\]
\end{thm}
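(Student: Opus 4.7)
The plan is to compare the MGFs of $Y-\lambda$ and $X(\lambda)-\lambda$ on a saddle-point contour, in parallel with the Poisson comparison underlying Theorem~\ref{thm:main}. The MGFs are
\[
M_Y(t)=\prod_{i=1}^{n}(1+p_i(e^t-1))e^{-p_it}, \qquad M_X(t)=e^{\lambda(e^t-1-t)},
\]
both entire, with $\EE(Y-\lambda)^k=k![t^k]M_Y(t)$ and similarly for $X$. Taking logarithms and using $\log(1+u)-u=-\sum_{m\ge 2}(-u)^m/m$ yields the factorization $M_Y(t)=M_X(t)g(t)$, where $g(t)=\exp(h(t))$ and
\[
h(t)=\sum_{m\ge 2}\frac{(-1)^{m-1}}{m}S_m(e^t-1)^m, \qquad S_m:=\sum_{i=1}^{n}p_i^m,
\]
with $S_m\le S_2$ for $m\ge 2$ since $p_i\in[0,1]$.

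Next, take the Poisson saddle from Theorem~\ref{thm:main}: let $r>0$ satisfy $r(e^r-1)=k/\lambda$. Since $k=o(\lambda/\max\{1,S_2\})$ forces $k/\lambda\to 0$, we have $r\to 0$ and $r^2\asymp k/\lambda$; the same hypothesis also gives $kS_2/\lambda\to 0$. On the circle $|t|=r$, the bound $|e^{re^{i\theta}}-1|\le re^r$ gives
\[
|h(re^{i\theta})|\le S_2\sum_{m\ge 2}\frac{(re^r)^m}{m}=O(S_2(re^r)^2)=O(S_2 k/\lambda)=o(1),
\]
so $g(re^{i\theta})=1+o(1)$ uniformly in $\theta$. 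Moreover, $h'(r)=O(S_2 r)$, so $g$ is slowly varying across the Gaussian window $|\theta|\lesssim 1/\sqrt{k}$ that dominates the saddle integral. By Cauchy's formula,
\[
\EE(Y-\lambda)^k=\frac{k!}{2\pi r^k}\int_{-\pi}^{\pi}M_X(re^{i\theta})\,g(re^{i\theta})\,e^{-ik\theta}\,d\theta,
\]
and running the same saddle-point analysis as for Theorem~\ref{thm:lowkpo}, with the slowly varying factor $g$ essentially pulled out to $g(r)$, gives $\EE(Y-\lambda)^k\sim g(r)\EE(X(\lambda)-\lambda)^k\sim\EE(X(\lambda)-\lambda)^k$ since $g(r)=1+o(1)$.

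The main technical obstacle is justifying the ``$g$ pulls out'' step uniformly for both even and odd $k$. For even $k$, the crude estimate $|g-1|=o(1)$ combined with the standard saddle-point envelope for the Cauchy integrand already controls the error. For odd $k$, however, $\EE(X(\lambda)-\lambda)^k$ is smaller than that envelope due to cancellations from the oscillation $e^{-ik\theta}$, and the slow variation of $g$ (not merely its $L^\infty$-closeness to $1$) becomes essential: it ensures the cancellations responsible for the correct size of $\EE(X(\lambda)-\lambda)^k$ are faithfully reproduced inside the $Y$-integral.
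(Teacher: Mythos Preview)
Your approach coincides with the paper's. The paper also writes $M_Y(t)=e^{\lambda(e^t-t-1)}g(t)$ with exactly your $g$, and then feeds both $M_X$ and $M_Y$ through the same saddle-point lemmas (Lemmas~\ref{lem:tech0}--\ref{lem:technical2}); this is packaged as Proposition~\ref{prop:pb}, whose three parts treat the ranges $k\le A\lambda^{1/3}$, $k\asymp\lambda^{1/3}$, and $k/\lambda^{1/3}\to\infty$ separately. The theorem is then the one-line comparison of Proposition~\ref{prop:pb} with Propositions~\ref{prop:babypoisson}--\ref{prop:po2}, the extra factor $\prod_i(1+p_i(e^r-1))e^{-p_i(e^r-1)}=g(r)=1+O(\Lambda r^2)=1+o(1)$ being exactly your observation.

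The obstacle you flag for odd $k$ is genuine, and it is the only place where ``$g$ pulls out'' needs more than $\|g-1\|_\infty=o(1)$. It is localized to the range $k=o(\lambda^{1/3})$: once $k/\lambda^{1/3}$ is bounded below, odd and even Poisson moments are of the same order (Theorem~\ref{thm:lowkpo}), so the crude envelope bound already suffices. In the small range the paper uses Corollary~\ref{cor:tech} with $m=3$, Taylor-expanding $H_1(z)=e^{\lambda(e^z-z^2/2-z-1)}g(z)$ to pick up $b_3=\lambda/6+O(\Lambda)$; the $O(\Lambda)$ perturbation is harmless since $\Lambda/\lambda=o(1)$. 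Your ``slow variation'' idea can be made to work directly as well: the point is that $g$ is nearly even, with $g(z)-g(-z)=O(\Lambda r^3)$ on $|z|=r$ (the leading $m=2$ term of $h$ contributes $-(S_2/2)[(e^z-1)^2-(e^{-z}-1)^2]=O(S_2 r^3)$). Applying Lemma~\ref{lem:tech0} to $M_X(g-1)$ and using this parity bound gives, for odd $k$, an error $\ll\AbsMom_k\lambda^{k/2}\cdot\Lambda r^3(\lambda r^2+1)\asymp\AbsMom_k\lambda^{k/2}\cdot\Lambda r^3 k$, which is $o$ of the odd Poisson moment $\asymp\AbsMom_k\lambda^{k/2}\cdot\lambda r^3$ precisely because $k\Lambda/\lambda=o(1)$. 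Either route closes the gap.
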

We mention Le Cam's inequality \cite{LeCam}, or rather its later refinement \cite[Eq.~(5.5)]{Steele}, as it is in the spirit of Theorem~\ref{thm:lowkpobern}. It says, in the notation of Theorem~\ref{thm:lowkpobern}, that
\[ \mathrm{d}_{\mathrm{TV}}(Y,X(\lambda))\le \min\{1,\lambda^{-1}\}\sum_{i=1}^{n}p_i^2. \]
\subsection{Remark on primes in short intervals}
Let $\lambda>0$. Under the assumption of the Hardy--Littlewood $k$-tuple conjecture, Gallagher \cite{Gallagher} proved that the random variable
\[ \sum_{t\le p  < t+\lambda \log x} 1,\]
where $t$ is chosen uniformly at random from $[0,x]$, tends in distribution to $X(\lambda)\sim \mathrm{Poisson}(\lambda)$ as $x\to \infty$. He proved this by showing, for each $k \in \NN$, that 
\begin{equation}\label{eq:mompo}
\lim_{x\to \infty} \EE_{t\le x} \bigg(\sum_{t\le p  < t+\lambda \log x} 1\bigg)^k =\EE X^k(\lambda).
\end{equation}
We put forth a conjecture which generalizes \eqref{eq:mompo}, motivated by Theorem~\ref{thm:main}.
\begin{conj}\label{conj}
	Fix $k\ge 2$. If $0< h=x^{o(1)}$ then
	\[ \EE_{t\le x} \bigg(\sum_{t\le p< t+h}1 - \int_{t}^{t+h}\frac{\diff{t}}{\log t}\bigg)^k\sim \EE_{t\le x} \bigg( \frac{\sum_{t\le p< t+h}\log p - h}{\log x}\bigg)^k \sim \EE \bigg(X\bigg(\frac{h}{\log x}\bigg)-\frac{h}{\log x}\bigg)^k \]
as $x\to\infty$, where	$X(h/\log x)\sim \mathrm{Poisson}(h/\log x)$.
\end{conj}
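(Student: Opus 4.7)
The plan is to extend Gallagher's moment computation \cite{Gallagher}, which handles $h\asymp \log x$, to the full range $h=x^{o(1)}$ by using a suitably uniform quantitative form of the Hardy--Littlewood $k$-tuple conjecture as input. Prime-power contributions satisfy $\sum_{t\le p<t+h}\log p - \sum_{t\le n<t+h}\Lambda(n)=O(\sqrt h\log h)$ and Abel summation converts between the prime-counting version and the von Mangoldt version with error absorbed in the $k$-th moment, so it suffices to prove
\[ \EE_{t\le x}\left(\frac{\Psi(t,h)-h}{\log x}\right)^k \sim \EE(X(\lambda)-\lambda)^k,\qquad \Psi(t,h):=\sum_{t\le n<t+h}\Lambda(n),\ \lambda:=\frac{h}{\log x}. \]

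\textbf{Main computation.} Expand $(\Psi-h)^k=\sum_{j}\binom{k}{j}(-h)^{k-j}\Psi^j$ binomially, so the task reduces to $\EE_t\Psi(t,h)^j$ for $j\le k$. Interchange order of summation to obtain
\[ \EE_t\Psi(t,h)^j = \frac{1}{x}\sum_{(n_1,\ldots,n_j)}\prod_{i=1}^j\Lambda(n_i)\,(h-d(\{n_i\}))_+, \]
where $d$ denotes the diameter. Group the $j$-tuples by the set partition of $\{1,\ldots,j\}$ into $s$ blocks of sizes $(j_1,\ldots,j_s)$ recording which coordinates coincide, and parametrize ordered tuples of distinct values by a shape $\mathcal H=\{0,h_2,\ldots,h_s\}$ and its orderings. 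Use the average estimate $\Lambda(n)^{j_i}\sim (\log x)^{j_i-1}\Lambda(n)$ to reduce to the standard $k$-tuple correlation, and apply the quantitative conjecture
\[ \sum_{m\le x}\prod_{i=1}^s\Lambda(m+h_i) = \mathfrak{S}(\mathcal H)\,x + O(x^{1-\delta}) \]
uniformly for $|\mathcal H|\le k$ and $d(\mathcal H)\le x^{o(1)}$. Gallagher's singular-series lemma gives $\sum_{|\mathcal H|=s,\,d(\mathcal H)\le y}\mathfrak{S}(\mathcal H)\sim y^{s-1}/(s-1)!$, hence by Fubini $\sum_{|\mathcal H|=s,\,d<h}(h-d(\mathcal H))\mathfrak{S}(\mathcal H)\sim h^s/s!$. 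Collecting, the contribution of partitions with $s$ blocks to $\EE_t\Psi^j$ is $\sim S(j,s)(\log x)^{j-s}h^s$, and summing over $s$ yields $\EE_t\Psi(t,h)^j\sim (\log x)^j B_j(\lambda)=(\log x)^j\EE X(\lambda)^j$ with $B_j$ the Touchard polynomial. Substituting into the binomial expansion collapses the answer to $(\log x)^k\EE(X(\lambda)-\lambda)^k$, as required.

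\textbf{Main obstacle.} The critical difficulty is the Hardy--Littlewood input: one needs an error term strong and uniform enough that the total error, after summing over the $\asymp h^{k-1}/(k-1)!$ shapes, is negligible against the main term $(\log x)^k\EE(X(\lambda)-\lambda)^k$, which can be very small when $\lambda$ is tiny and $k$ is even (and even smaller for odd $k$, where Poisson cancellation produces a main term of size $\lambda^{(k+1)/2}$ rather than $\lambda^{k/2}$). The conjecture with an error of shape $O(x^{1/2+\varepsilon})$, predicted by the circle method, comfortably suffices but is far beyond reach unconditionally; even securing the $h=x^{o(1)}$ range qualitatively would require an ``almost-all shapes'' averaging of the singular series in the spirit of Bombieri--Vinogradov, itself a substantial independent problem.
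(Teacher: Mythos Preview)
The statement you are addressing is labeled \emph{Conjecture}~\ref{conj} in the paper, and the paper does not prove it. The surrounding discussion only records its consistency with known cases: for $h=o(\log x)$ it follows from an upper-bound sieve, for $h\asymp\log x$ it is equivalent to Gallagher's result, and for $h/\log x\to\infty$ it refines the Montgomery--Soundararajan conjecture. There is therefore no ``paper's proof'' to compare your proposal against.

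Your proposal is not a proof but a conditional derivation, and you are candid about this in your final paragraph: the entire argument rests on a quantitative Hardy--Littlewood $k$-tuple conjecture with a power-saving error term uniform over all admissible shapes of diameter $x^{o(1)}$. That input is itself a conjecture of at least comparable depth to the statement you are trying to establish, so what you have written is a heuristic reduction, not a proof. As such it is reasonable and in the spirit of Gallagher's original computation, but it should be presented as evidence for the conjecture rather than as a proof plan.

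Two technical points to tighten if you pursue this as a heuristic. First, the replacement $\Lambda(n)^{j_i}\sim(\log x)^{j_i-1}\Lambda(n)$ is only valid for primes $n$ with $\log n\sim\log x$; you need to restrict the average over $t$ to $t\ge x^{\varepsilon}$ (harmless for the moment) and separately bound the prime-power contribution. Second, and more seriously, the cancellation required for odd $k$ is delicate: after your binomial recombination the main terms of size $\lambda^{j}$ must cancel down to $\lambda^{(k+1)/2}$, so the error terms from Gallagher's singular-series averaging and from the Hardy--Littlewood input must be controlled to that precision, not merely to leading order. This is exactly the phenomenon the paper is highlighting --- that odd centered Poisson moments are genuinely smaller --- and it means your sketch would need a much more careful error analysis in the odd case than the phrase ``collapses the answer'' suggests.
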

When $h=o(\log x)$ and $k \ge 2$, $\EE (X(h/\log x)-h/\log x)^k \sim h/\log x$ and Conjecture~\ref{conj} is straightforward to verify using an upper bound sieve. For $h \asymp \log x$, Conjecture~\ref{conj} is \textit{equivalent} to Gallagher's \eqref{eq:mompo}. Now suppose $h/\log x \to \infty$ and $h=x^{o(1)}$. If $k$ is even, \cite[Conj.~1]{MS} says that $\EE_{t\le x} ( \sum_{t\le p< t+h}\log p - h)^k \sim \mu_{k} (h\log x)^{k/2}$. This is consistent with Conjecture~\ref{conj} since $\EE (X(h/\log x)-h/\log x)^k \sim (h/\log x)^{k/2} \EE(G^k)$. If $k$ is odd, \cite[Conj.~1]{MS} only says that $\EE_{t\le x} ( \sum_{t\le p< t+h}\log p - h)^k = o((h\log x)^{k/2})$. In contrast, Conjecture~\ref{conj} says that when $k$ is odd, the missing lower order term in \cite[Conj.~1]{MS} is explained by the Poisson distribution. Since $\EE (X(h/\log x)-h/\log x)^k \sim (h/\log x)^{(k-1)/2}\mu_{k+1} (k-1)/6$ for odd $k\ge 3$, Conjecture~\ref{conj} is also consistent with the conditional estimate in \cite[Thm~1.4]{LB}.
\begin{remark*}
Let $Y(n,p)\sim \mathrm{Binomial}(n,p)$. As $x\to \infty$, one has $\EE (X(\tfrac{h}{\log x})-\tfrac{h}{\log x})^k\sim \EE (Y(\lfloor h\rfloor,\tfrac{1}{\log x})-\tfrac{h}{\log x})^k$ as long as $h\to \infty$ or $h \in \NN$. Here $k\ge 2$ is fixed.
\end{remark*}
\subsection*{Structure of paper}
In \S\ref{sec:full} we state our full results on centered moments, from which Theorems~\ref{thm:main}--\ref{thm:lowkpobern} follow quickly, see \S\ref{sec:detailsthms}. In \S\ref{sec:lemmas} we state our technical lemmas on generating functions, and use them to deduce the results in \S\ref{sec:full}. In \S\ref{sec:lemmasproofs} we prove the technical lemmas stated in \S\ref{sec:lemmas}. In \S\ref{sec:intuition} we explain \textit{why} a transition occurs in the $k$th moment of $\omega(n)-\log \log n$ when $k\asymp (\log \log x)^{1/3}$.
\section{Full results}\label{sec:full}
The following result is a strengthening of \eqref{eq:GS}, but for the Poisson distribution instead of $\omega$.
\begin{proposition}\label{prop:babypoisson}
Let $\lambda >0$. Let $X(\lambda) \sim \mathrm{Poisson}(\lambda)$.	Fix $A>0$. For $1 \le k \le A \lambda^{1/3}$ we have
\[	\EE\bigg(\frac{X(\lambda)-\lambda}{\sqrt{\lambda}} \bigg)^k = \bigg( \mu_k + \frac{k(k-1)\mu_{k-1}}{6\sqrt{\lambda}}\bigg)\bigg(1+O_A\bigg( \frac{k^3}{\lambda}\bigg)\bigg).\]
The implied constant depends only on $A$.
\end{proposition}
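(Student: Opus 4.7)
I would begin with the moment generating function $\EE e^{z(X(\lambda)-\lambda)}=\exp(\lambda(e^z-1-z))$. Substituting $z=u/\sqrt{\lambda}$ and using $\lambda(e^{u/\sqrt{\lambda}}-1-u/\sqrt{\lambda})=\tfrac{u^2}{2}+\sum_{j\ge 3}u^j/(j!\,\lambda^{(j-2)/2})$ yields
\[
\EE\Bigl(\frac{X(\lambda)-\lambda}{\sqrt{\lambda}}\Bigr)^{k}
= k!\,[u^k]\,e^{u^2/2}\,H_\lambda(u),
\qquad
H_\lambda(u):=\exp\!\Bigl(\sum_{j\ge 3}\frac{u^j}{j!\,\lambda^{(j-2)/2}}\Bigr).
\]
The strategy is to expand $H_\lambda$ multinomially as $\sum_{\mathbf m\ge\mathbf 0}\prod_{j\ge 3} u^{jm_j}\lambda^{-m_j(j-2)/2}/((j!)^{m_j}m_j!)$ over finitely-supported vectors $\mathbf m=(m_3,m_4,\dots)$, and then to extract $[u^k]$ term by term.

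The index $\mathbf m=\mathbf 0$ produces $\mu_k$. The index $\mathbf m=(1,0,0,\dots)$ produces $\tfrac{k!}{6\sqrt{\lambda}}[u^{k-3}]e^{u^2/2}=\tfrac{k(k-1)(k-2)\mu_{k-3}}{6\sqrt{\lambda}}$, which I would rewrite as $\tfrac{k(k-1)\mu_{k-1}}{6\sqrt{\lambda}}$ using the identity $(k-2)\mu_{k-3}=\mu_{k-1}$. This identity is trivial for even $k$ (both sides vanish), and for odd $k=2\ell+1$ it is the standard Gaussian recursion $(2\ell-1)\mu_{2\ell-2}=\mu_{2\ell}$. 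These are exactly the two terms in the statement.

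For every other $\mathbf m$, I would estimate the contribution using the elementary ratio $\mu_{k-2i}/\mu_k=\prod_{s=1}^i(k-2s+1)^{-1}$. Direct calculation shows the leading remainder indices --- $\mathbf m=(2,0,\dots)$ and $(0,1,0,\dots)$ for even $k$, and $\mathbf m=(3,0,\dots),(1,1,0,\dots),(0,0,1,0,\dots)$ for odd $k$ --- each contribute at most $O(k^3/\lambda)$ times the main term. The main obstacle is dominating the sum over all larger $\mathbf m$: the key observation is that incrementing a single $m_j$ by one multiplies the corresponding summand by a factor of order $k^{j/2}/\lambda^{(j-2)/2}$, which in the regime $k\le A\lambda^{1/3}$ is $O_A(1)$ for $j=3$ and $o(1)$ for $j\ge 4$. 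Combined with the $1/m_j!$ factors, the resulting multiple sum converges to at most $e^{O_A(1)}$ times the main term, and a careful accounting of the smallest remainder contributions yields the claimed $O_A(k^3/\lambda)$ error uniformly in $k\le A\lambda^{1/3}$.
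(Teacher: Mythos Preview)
Your plan is correct and begins from exactly the same decomposition the paper uses: factor the moment generating function as $e^{\lambda z^2/2}\,H_1(z)$ with $H_1(z)=\exp\bigl(\lambda(e^z-1-z-z^2/2)\bigr)$, and read off the first few Taylor coefficients of $H_1$ to produce $\mu_k$ and $\tfrac{k(k-1)\mu_{k-1}}{6\sqrt\lambda}$. The only difference is in how the tail is controlled. The paper packages this step as Corollary~\ref{cor:tech} (built on the saddle-point estimate of Lemma~\ref{lem:tech0}): one bounds the remainder after $m+1$ Taylor terms of $H_1$ by a Cauchy estimate on the circle $|z|=\sqrt{(k-m-1)/\lambda}$, taking $m=1$ for even $k$ and $m=3$ for odd $k$. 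This disposes of all higher multi-indices $\mathbf m$ in one stroke and is reused verbatim for Propositions~\ref{prop:omega} and~\ref{prop:pb}. Your term-by-term summation over $\mathbf m$, exploiting the parity constraint on $N=\sum_j jm_j$, is a more elementary substitute that works equally well here; one small imprecision in your sketch is that incrementing $m_3$ by one flips the parity of $N$ and hence compares a nonzero term to a vanishing one, so the relevant step is incrementing $m_3$ by two, which multiplies the bound by a factor $\asymp k^3/\lambda$ --- exactly the source of the claimed error.
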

The following result goes beyond the range of Proposition~\ref{prop:babypoisson}.
\begin{proposition}\label{prop:po}
	Let $\lambda >0$. Let $X(\lambda) \sim \mathrm{Poisson}(\lambda)$.	Fix $A>0$. For $1 \le k \le A\lambda$ we have
	\begin{align*}
		\EE\bigg(\frac{X(\lambda)-\lambda}{\sqrt{\lambda}} \bigg)^k &=\frac{\AbsMom_k}{2}\exp(\lambda ( e^r-(1+r+r^2/2)))\\
		&\qquad \cdot ( 1 + (-1)^k \exp( \lambda (e^{-r}-e^{r}+2r))(1+O(1/k))+ O_A(k^2/\lambda + r))
	\end{align*}
	where $r=\sqrt{k/\lambda}$.  The first implied constant is absolute, the second one depends only on $A$.
\end{proposition}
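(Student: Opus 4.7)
The plan is to extract the moment from a contour integral via Cauchy's formula and then perform a saddle-point analysis that picks up contributions from \emph{both} $\theta=0$ and $\theta=\pi$. Since the moment generating function of $X(\lambda)-\lambda$ is $e^{\lambda(e^z-1-z)}$, for any $r>0$,
\[
\EE(X(\lambda)-\lambda)^k = \frac{k!}{2\pi i}\oint_{|z|=r}\frac{e^{\lambda(e^z-1-z)}}{z^{k+1}}\,\diff z = \frac{k!}{2\pi r^k}\int_{-\pi}^{\pi} e^{\Psi(\theta)}\,\diff\theta,
\]
where $\Psi(\theta)=\lambda(e^{re^{i\theta}}-1-re^{i\theta})-ik\theta$. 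I take $r=\sqrt{k/\lambda}$, as in the statement. The real part $\mathrm{Re}\,\Psi(\theta)=\lambda(e^{r\cos\theta}\cos(r\sin\theta)-1-r\cos\theta)$ has two local maxima on $[-\pi,\pi]$: at $\theta=0$, with value $\lambda(e^r-1-r)$, and at $\theta=\pi$, with value $\lambda(e^{-r}-1+r)$; the former exceeds the latter by $\lambda(e^r-e^{-r}-2r)>0$.

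Next I split the integral as $I_0+I_\pi+I_{\mathrm{rest}}$ corresponding to $|\theta|\le\delta$, $|\theta-\pi|\le\delta\pmod{2\pi}$, and the complement, for a suitable $\delta$. For $I_0$, I Taylor-expand (using $\lambda r^2=k$)
\[
\Psi(\theta) = \lambda(e^r-1-r) + i\,\frac{k(e^r-1-r)}{r}\,\theta - \frac{1}{2}\Bigl(k\,\frac{e^r-1}{r}+ke^r\Bigr)\theta^2 + O\bigl(ke^r|\theta|^3\bigr).
\]
The linear coefficient does not vanish, because $r$ is not the exact saddle point of $\lambda(e^z-1-z)-k\log z$, but its magnitude is only $O(kr)$; carrying out the Gaussian integration produces a shift factor $\exp(-\Psi'(0)^2/(2\Psi''(0)))=\exp(O(k^2/\lambda))$, which is precisely the source of the $O(k^2/\lambda)$ error in the statement. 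Using Stirling on $k!/r^k$ together with the identity $\lambda(e^r-1-r)=\lambda(e^r-1-r-r^2/2)+k/2$, the Gaussian contribution simplifies to
\[
I_0 \sim \frac{\AbsMom_k}{2}\,\lambda^{k/2}\, e^{\lambda(e^r-1-r-r^2/2)}\bigl(1+O(k^2/\lambda+r)\bigr).
\]
For $I_\pi$, the substitution $\theta=\pi+\eta$ together with $e^{re^{i(\pi+\eta)}}=e^{-re^{i\eta}}$ reduces the local analysis to the one at $\theta=0$ with $r\mapsto -r$, while the prefactor $e^{-ik\pi}=(-1)^k$ appears. This produces $(-1)^k\tfrac{\AbsMom_k}{2}\lambda^{k/2}e^{\lambda(e^{-r}-1+r-r^2/2)}(1+O(1/k))$; its ratio to $I_0$ is exactly the secondary factor $(-1)^k\exp(\lambda(e^{-r}-e^r+2r))(1+O(1/k))$. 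For $I_{\mathrm{rest}}$, one shows $\mathrm{Re}\,\Psi(\theta)\le \lambda(e^r-1-r)-ck$ on the bulk, using the leading-order expansion $\mathrm{Re}\,\Psi(\theta)-\lambda(e^r-1-r)\approx -k\sin^2\theta$, so this tail is absorbed into the error.

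The hard part will be a uniform control of the error terms across the whole range $1\le k\le A\lambda$. Three sources of error must be coordinated: (a) the shift factor $\exp(-\Psi'(0)^2/(2\Psi''(0)))=\exp(O(k^2/\lambda))$ coming from the mismatch between $r$ and the exact saddle $r_*$ defined by $\lambda r_*(e^{r_*}-1)=k$; (b) the cubic and higher Taylor contributions, which after being bounded by $O(ke^r|\theta|^3)$ and integrated against the Gaussian weight produce the $O(r)$ relative error; and (c) the Stirling approximation to $k!/r^k$, coupled with the asymptotics of $\AbsMom_k$ involving $\Gamma(k/2+1)$. A cleaner alternative to (a) would be to carry out the saddle-point analysis on the exact saddle contour $|z|=r_*$ and translate back to $r=\sqrt{k/\lambda}$ at the end; this eliminates the non-vanishing linear term at the cost of a short conversion step. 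I expect each of these estimates to be encapsulated in one of the generating-function lemmas of Section~\ref{sec:lemmas}, applied here to the explicit generating function $e^{\lambda(e^z-1-z)}$, whose only zeros and singularities lie far from the saddle.
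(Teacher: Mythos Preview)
Your approach is sound and would prove the proposition, but the paper organizes the computation differently. Rather than running a two-saddle analysis on the full phase $\Psi$, the paper simply applies Lemma~\ref{lem:tech1} to $H(z)=e^{\lambda(e^z-z-1)}$: one factors $H(z)=e^{\lambda z^2/2}H_1(z)$ with $H_1(z)=e^{\lambda(e^z-1-z-z^2/2)}$, and then uses the $\pi$-periodicity of $e^{\lambda r^2 e^{2i\theta}/2}$ to fold $[-\pi,\pi]$ onto $[-\pi/2,\pi/2]$ via $f(\theta)+f(\theta+\pi)$. This merges your two saddles into a single local analysis at $\theta=0$, producing the combination $H_1(re^{i\theta})+(-1)^kH_1(-re^{i\theta})$ automatically. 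Because $\lambda r^2=k$ exactly, the residual Gaussian phase $k(e^{2i\theta}-1)/2-ik\theta$ has \emph{no} linear term at $\theta=0$, so your shift-factor complication never arises; for even $k$ the Gaussian integral is then evaluated exactly as the $k$th coefficient of $e^{\lambda z^2/2}$, and for odd $k$ a standard Laplace estimate gives the $(1+O(1/k))$ with an absolute constant. The $O_A(k^2/\lambda+r)$ comes not from a shift factor but from bounding $E/k$ in Lemma~\ref{lem:tech1} via $r\max|H_1'|+r^2\max|H_1''|\ll_A(r+k^2/\lambda)\,H_1(r)$ on $|z|=r$. Your direct route also works---with the caveat that your claimed $(1+O(1/k))$ on $I_\pi$ is not what a symmetric shift-factor analysis naturally yields, though the weaker $O(k^2/\lambda+r)$ you would actually get is absorbed into the global error since the secondary factor is at most $1$. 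The payoff of the paper's factoring trick is reusability: the same Lemma~\ref{lem:tech1} handles Propositions~\ref{prop:kac} and~\ref{prop:pb} with no further saddle-point work.
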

Proposition~\ref{prop:po} yields an asymptotic result when $k=o(\sqrt{\lambda})$, and otherwise only implies an upper bound. The following result does yield an asymptotic result for $k=O(\lambda)$, as long as $k/\lambda^{1/3}\to \infty$.
\begin{proposition}\label{prop:po2}
Let $\lambda>0$. Let $X(\lambda) \sim \mathrm{Poisson}(\lambda)$. Fix $A>0$. Given $k \ge 1$ let $r$ be the positive solution to $r(e^r-1)=k/\lambda$.  For $1 \le k \le \lambda A$ we have $r \asymp_A\sqrt{k/\lambda}$ and
	\[\EE\bigg(\frac{X(\lambda)-\lambda}{\sqrt{\lambda}} \bigg)^k =\frac{\AbsMom_k}{2} \frac{\exp(kS)}{\sqrt{s}} \left(1 +O_A(1/k + e^{-c_A k^{3/2}/\lambda^{1/2}})\right)\]
	where $c_A$ is a positive constant that depends only on $A$, and $S$ and $s$ are functions of $r$ defined as in Lemma~\ref{lem:technical2}.
\end{proposition}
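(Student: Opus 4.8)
The plan is to extract the central Poisson moments from their (entire) moment generating function and feed the resulting coefficient integral into Lemma~\ref{lem:technical2}; the argument is close in spirit to the proof of Proposition~\ref{prop:po}, the difference being that we now take the radius of integration to be the genuine saddle point $r$ solving $r(e^r-1)=k/\lambda$ rather than its approximation $\sqrt{k/\lambda}$, which is what upgrades the error term to one that is $o(1)$ throughout $1\le k\le A\lambda$. Since $\EE e^{t(X(\lambda)-\lambda)}=\exp(\lambda(e^t-1-t))$ as entire functions of $t$,
\[ \EE(X(\lambda)-\lambda)^k = k!\,[t^k]\exp(\lambda(e^t-1-t)) = \frac{k!}{2\pi i}\oint_{|t|=r}\frac{\exp(\lambda(e^t-1-t))}{t^{k+1}}\diff t, \]
and $r$ is exactly the critical point of $t\mapsto\lambda(e^t-1-t)-k\log t$ on the positive real axis, since the derivative there equals $\lambda(e^r-1)-k/r$, which vanishes by definition of $r$. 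So the task reduces to checking the hypotheses of Lemma~\ref{lem:technical2} for this integrand and translating its output.

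First I would record the elementary bound $r\asymp_A\sqrt{k/\lambda}$: the map $\rho\mapsto\rho(e^\rho-1)$ is continuous and strictly increasing from $0$ to $\infty$ on $(0,\infty)$, so $r$ is well defined, and since $k/\lambda\le A$ we have $r\le r_A$ for some $r_A=r_A(A)$; on $[0,r_A]$ one has $\rho(e^\rho-1)\asymp_A\rho^2$, whence $r^2\asymp_A k/\lambda$. This controls the local data of the saddle: writing $\beta:=\lambda r(e^r-1+re^r)=k(1+re^r/(e^r-1))$ for minus the second $\theta$-derivative at $\theta=0$ of the phase along $|t|=r$, we get $\beta\asymp_A k$, so the Gaussian scale at the saddle is $\asymp_A1/\sqrt k$ — the source of the $O_A(1/k)$ error. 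The competing contribution comes from the point $t=-r$ (i.e.\ $\theta=\pi$), where $\theta\mapsto\mathrm{Re}(e^{re^{i\theta}}-re^{i\theta})$ has a secondary local maximum, strictly smaller than its value at $\theta=0$ by $\lambda(e^r-e^{-r}-2r)=2\lambda(\sinh r-r)\asymp_A\lambda r^3\asymp_A k^{3/2}/\lambda^{1/2}$; combined with the $(-1)^{k+1}$ sign from $(-r)^{-(k+1)}$ this produces the $O_A(e^{-c_A k^{3/2}/\lambda^{1/2}})$ term.

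With these inputs I would invoke Lemma~\ref{lem:technical2}. The remaining hypotheses — analyticity of the integrand and upper bounds for $|\exp(\lambda(e^t-1-t))|/|t|^{k+1}$ on the arcs of $|t|=r$ away from $t=\pm r$ — are immediate from $|\exp(\lambda e^t)|=\exp(\lambda e^{\mathrm{Re}\,t}\cos(\mathrm{Im}\,t))$ together with the shape of $\theta\mapsto\mathrm{Re}(e^{re^{i\theta}}-re^{i\theta})$ just described (global maximum only at $\theta=0$, secondary maximum at $\theta=\pi$, nowhere else above the level at $\theta=\pi$). The lemma then isolates the main saddle term $k!\exp(\lambda(e^r-1-r))/(r^k\sqrt{2\pi\beta})$ with relative error $O_A(1/k+e^{-c_A k^{3/2}/\lambda^{1/2}})$, the $t=-r$ bump being absorbed into the second error; rewriting this via Stirling's formula in the normalization of the statement (so that $\AbsMom_k\asymp\sqrt2\,(k/e)^{k/2}$) puts it in the form $\tfrac12\AbsMom_k\exp(kS)/\sqrt s$ with $S$ and $s$ the functions of $r$ defined in Lemma~\ref{lem:technical2} (here $s\asymp_A1$, being comparable to $\beta/k$).

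The step I expect to be the real obstacle is the one already packaged inside Lemma~\ref{lem:technical2}: making the saddle-point approximation uniform over the entire range $1\le k\le A\lambda$, including $k\asymp\lambda$ where $r\asymp_A1$ and one is no longer perturbing around the Gaussian, while simultaneously keeping the subdominant contribution genuinely of size $e^{-c_A k^{3/2}/\lambda^{1/2}}$ rather than something larger. Granting that lemma, the work specific to Proposition~\ref{prop:po2} is exactly the verification described above: the identity for $\EE(X(\lambda)-\lambda)^k$, the bound $r\asymp_A\sqrt{k/\lambda}$, the sizes of $\beta$ and of the dominant–subdominant gap, and the bookkeeping with $\AbsMom_k$.
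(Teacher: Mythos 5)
Your proof is correct and follows exactly the paper's approach: the paper likewise identifies the moment generating function $\exp(\lambda(e^t-t-1))$, equates $\EE(X(\lambda)-\lambda)^k$ with $k!$ times its $k$th Taylor coefficient, and applies Lemma~\ref{lem:technical2} with $H(z)=e^{\lambda(e^z-z-1)}$, $H_1\equiv 1$, $H_2\equiv 0$. The extra saddle-point bookkeeping you supply (the Hessian $\beta$, the secondary peak at $\theta=\pi$, the contour bounds) is already built into Lemma~\ref{lem:technical2} and need not be re-verified at the level of the proposition.
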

The following comparative result for Poisson moments will be needed.
\begin{proposition}\label{prop:comp}
Let $\lambda>0$. Fix $A>0$.  Let $\lambda'>0$ be a quantity satisfying $-A\le \lambda'-\lambda \le A$.  Let $X(\lambda) \sim \mathrm{Poisson}(\lambda)$ and $X(\lambda') \sim \mathrm{Poisson}(\lambda')$. Given $k \ge 2$ let $r$ be the positive solution to $r(e^r-1)=k/\lambda$.  If $k \le A \lambda$  then, as $\lambda \to \infty$,
\[\EE(X(\lambda)-\lambda )^k \sim e^{(\lambda-\lambda')(e^r-r-1)} \EE(X(\lambda')-\lambda' )^k.\]
\end{proposition}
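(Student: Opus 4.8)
The plan is to express each of the two centered Poisson moments as essentially the same contour integral, differing only by a bounded ``twist'', and to push that twist through the saddle point analysis already underlying Proposition~\ref{prop:po2}. Since $\EE e^{z(X(\mu)-\mu)}=e^{\mu(e^z-1-z)}$, for every $\mu>0$, $k\ge 1$ and $\rho>0$ we have
\[\EE(X(\mu)-\mu)^k=k!\,[z^k]e^{\mu(e^z-1-z)}=\frac{k!}{2\pi i}\oint_{|z|=\rho}\frac{e^{\mu(e^z-1-z)}}{z^{k+1}}\diff z.\]
Writing $e^{\lambda'(e^z-1-z)}=e^{\lambda(e^z-1-z)}\,E(z)$ with $E(z):=e^{(\lambda'-\lambda)(e^z-1-z)}$ entire, this gives
\[\EE(X(\lambda')-\lambda')^k=\frac{k!}{2\pi i}\oint_{|z|=\rho}\frac{e^{\lambda(e^z-1-z)}}{z^{k+1}}\,E(z)\,\diff z,\]
i.e.\ $\EE(X(\lambda')-\lambda')^k$ equals the integral that computes $\EE(X(\lambda)-\lambda)^k$ with the integrand multiplied by $E$. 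Because $|\lambda'-\lambda|\le A$ stays bounded as $\lambda\to\infty$, the twist $E$ varies slowly compared with the rapid decay of the integrand, and one expects it to contribute only the constant equal to its value at the saddle point. (Heuristically this is the envelope theorem: if $\rho(\mu)$ denotes the saddle, the $\mu$-derivative of the value of the exponent there equals $e^{\rho(\mu)}-1-\rho(\mu)$ because the $\rho'(\mu)$-terms cancel by the saddle equation $\mu\rho(\mu)(e^{\rho(\mu)}-1)=k+1$; integrating from $\lambda'$ to $\lambda$, with the saddle staying near $r$, yields the factor $e^{(\lambda-\lambda')(e^r-r-1)}$.)

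To make this precise I would take $\rho=r_0$, where $r_0>0$ solves $r_0(e^{r_0}-1)=(k+1)/\lambda$ — the saddle point of $z\mapsto\lambda(e^z-1-z)-(k+1)\log z$; since $k\ge 2$, the argument of Proposition~\ref{prop:po2} gives $r_0\asymp_A\sqrt{k/\lambda}=O_A(1)$. The saddle point analysis behind Lemma~\ref{lem:technical2} shows that $\EE(X(\lambda)-\lambda)^k$ is, up to a factor $1+o(1)$, the contribution of a short arc $\mathcal{A}\subset\{|z|=r_0\}$ centred at $z=r_0$ on which $|z-r_0|=o(1)$, the remaining part of the contour being negligible. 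On $\mathcal{A}$, since $w\mapsto e^w-1-w$ is analytic with $|e^w-1|=O_A(1)$ near $r_0$,
\[(\lambda'-\lambda)(e^z-1-z)=(\lambda'-\lambda)(e^{r_0}-1-r_0)+O_A(|z-r_0|)=(\lambda'-\lambda)(e^{r_0}-1-r_0)+o(1),\]
so $E(z)=e^{(\lambda'-\lambda)(e^{r_0}-1-r_0)}\,(1+o(1))$ uniformly on $\mathcal{A}$ and the constant may be pulled out. On $\{|z|=r_0\}\setminus\mathcal{A}$ one has $|\mathrm{Re}(e^z-1-z)|\le e^{r_0}+1+r_0=O_A(1)$, hence $|E(z)|=e^{O_A(1)}$, so the twisted tail is at most $e^{O_A(1)}$ times the already-negligible untwisted tail and remains negligible. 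Combining, $\EE(X(\lambda')-\lambda')^k\sim e^{(\lambda'-\lambda)(e^{r_0}-1-r_0)}\,\EE(X(\lambda)-\lambda)^k$. (Alternatively one can apply Lemma~\ref{lem:technical2}, or a routine variant of it, to the twisted and to the untwisted integral separately, and divide.)

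It remains to replace $r_0$ by $r$, the positive root of $r(e^r-1)=k/\lambda$. The map $t\mapsto t(e^t-1)$ has derivative $\asymp t$ on $(0,O_A(1)]$ and $r,r_0\asymp_A\sqrt{k/\lambda}$, so $|r_0-r|=O_A\!\left(\frac{1}{\lambda}\cdot\sqrt{\frac{\lambda}{k}}\right)=O_A\!\left(\frac{1}{\sqrt{k\lambda}}\right)$, whence
\[(e^{r_0}-1-r_0)-(e^r-1-r)=\int_r^{r_0}(e^t-1)\,\diff t=O(r_0\,|r_0-r|)=O_A(1/\lambda),\]
and therefore $(\lambda-\lambda')\big((e^{r_0}-1-r_0)-(e^r-1-r)\big)=O_A(1/\lambda)=o(1)$. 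Since $\EE(X(\lambda)-\lambda)^k\sim e^{(\lambda-\lambda')(e^{r_0}-1-r_0)}\,\EE(X(\lambda')-\lambda')^k$ by the previous paragraph, this yields $\EE(X(\lambda)-\lambda)^k\sim e^{(\lambda-\lambda')(e^r-r-1)}\,\EE(X(\lambda')-\lambda')^k$, which is the claim. (When $k$ stays bounded the claim is immediate anyway: $\EE(X(\mu)-\mu)^k$ is a polynomial in $\mu$ of degree $\lfloor k/2\rfloor$ with nonzero leading coefficient, so the ratio of the two moments tends to $1$, as does $e^{(\lambda-\lambda')(e^r-r-1)}$ because then $r\to 0$; this also covers the range in which the saddle point estimate degenerates.) The main obstacle is the uniform control of $E$ over the whole contour — knowing that the saddle point analysis concentrates the integral on an arc where $E$ oscillates by only $o(1)$, and that $E$ stays bounded off that arc — and it is exactly there that the hypothesis $|\lambda'-\lambda|\le A$ is essential: a twist of size comparable to $\lambda$ would shift the saddle and change the asymptotics entirely.
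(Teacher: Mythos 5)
Your approach for the high range $k/\lambda^{1/3}\to\infty$ is essentially the paper's: apply the saddle-point Lemma~\ref{lem:technical2} once with the trivial twist $H_1\equiv 1$ to get $\EE(X(\lambda')-\lambda')^k$ and once with the twist $H_1(z)=e^{(\lambda-\lambda')(e^z-z-1)}$ to get $\EE(X(\lambda)-\lambda)^k$, and divide; the passage from $r_0$ to $r$ is a routine perturbation exactly as you say.

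There is, however, a genuine gap in the intermediate range $k\to\infty$ with $k=O(\lambda^{1/3})$. Your argument rests on the assertion that ``$\EE(X(\lambda)-\lambda)^k$ is, up to a factor $1+o(1)$, the contribution of a short arc $\mathcal{A}\subset\{|z|=r_0\}$ centred at $z=r_0$.'' This is false when $\lambda r_0^3 = O(1)$, i.e.\ $k = O(\lambda^{1/3})$. The integrand on $|z|=r_0$ also has a near-saddle at $z\approx -r_0$, and the ratio of the two peak magnitudes is $\approx e^{\lambda(e^{r_0}-e^{-r_0}-2r_0)}\approx e^{\lambda r_0^3/3}$, which does not tend to infinity in this range; for odd $k$ the contributions of the two arcs cancel to leading order (this is why the odd moments are of smaller order $\lambda^{(k-1)/2}$ rather than $\lambda^{k/2}$, as in Proposition~\ref{prop:babypoisson}). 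So you cannot pull the twist $E$ out as its value at the single point $r_0$; you must track it on both arcs, and it is a lucky extra fact (namely $E(r_0)/E(-r_0)=e^{(\lambda'-\lambda)(e^{r_0}-e^{-r_0}-2r_0)}=1+O_A(1/\lambda)$ when $|\lambda'-\lambda|\le A$ and $k\le A\lambda^{1/3}$) that saves the statement. Your parenthetical fallback — ``$\EE(X(\mu)-\mu)^k$ is a polynomial of degree $\lfloor k/2\rfloor$, so the ratio tends to $1$'' — only covers bounded $k$; it does not extend to $k\to\infty$, and saying ``this also covers the range in which the saddle point estimate degenerates'' does not make it so. The paper avoids all of this by splitting into three regimes and using Proposition~\ref{prop:babypoisson} (for $k=o(\lambda^{1/3})$) and Proposition~\ref{prop:po} (for $k\asymp\lambda^{1/3}$) — each of which retains the crucial $H_1(r)\pm H_1(-r)$ structure — before invoking Lemma~\ref{lem:technical2} for $k/\lambda^{1/3}\to\infty$. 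To repair your proof you would need either to do the same, or to redo the short-arc analysis keeping both arcs and showing the twist is asymptotically equal on the two.
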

The following result strengthens \eqref{eq:GS} and parallels Proposition~\ref{prop:babypoisson}.
\begin{proposition}\label{prop:omega}
	Fix $A>0$. For $x\ge 3$, $1 \le k \le A(\log \log x)^{1/3}$ and $-A \le \Shift \le A$ we have
\[			\EE_{n \le x}\bigg(\frac{\omega(n)-\log \log x-\Shift}{\sqrt{\log \log x}} \bigg)^k =\mu_k +\bigg(\frac{k(k-1)}{6}+k(\MMc-\Shift)\bigg) \frac{\mu_{k-1}}{\sqrt{\log \log x}} + O_A\bigg(\AbsMom_k \bigg(\frac{k^{3}}{\log \log x}\bigg)^{1+\frac{\mathbf{1}_{2\nmid k}}{2}}\bigg)\]
where $\MMc$ is as in \eqref{eq:bdef}. The implied constant depends only on $A$.
\end{proposition}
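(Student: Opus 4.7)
The plan is to derive Proposition~\ref{prop:omega} from a Selberg--Delange type asymptotic for the moment generating function of $\omega(n)$, combined with Cauchy coefficient extraction and a careful Taylor expansion. Write $L = \log\log x$ and $\Psi(w) := F(e^w)e^{-\Shift w}$, so that $\Psi(0) = 1$ and $\Psi'(0) = \MMc - \Shift$. Invoking the uniform Selberg--Delange asymptotic promised in \S\ref{sec:lemmas}, the $k$th moment of $\omega(n) - L - \Shift$ equals
\[ k!\,[z^k]\bigl(\Psi(z)\exp(L(e^z - z - 1))\bigr) + (\text{negligible error}),\]
which I would extract via Cauchy's formula on a circle $|z| = r_0 \asymp 1/\sqrt{L}$. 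Dividing by $L^{k/2}$ converts this into $k!\,[u^k]$ of the same expression with $z = u/\sqrt{L}$. Since $L(e^{u/\sqrt{L}} - u/\sqrt{L} - 1) = u^2/2 + \phi(u)$ with $\phi(u) = \sum_{j\ge 3} u^j/(j!\, L^{(j-2)/2})$, the quantity to be analyzed is $\Psi(u/\sqrt{L})\exp(\phi(u))$ multiplied by $e^{u^2/2}$.

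Next I would expand $\Psi(u/\sqrt{L})\exp(\phi(u))$ in descending powers of $L^{-1/2}$:
\[ \Psi(u/\sqrt{L})\exp(\phi(u)) = 1 + \frac{(\MMc - \Shift)u + u^3/6}{\sqrt{L}} + \frac{P_2(u)}{L} + \frac{P_3(u)}{L^{3/2}} + \cdots,\]
where each $P_m(u)$ is a polynomial of degree $\le 3m$ with coefficients bounded in terms of $A$. A key parity observation is that every monomial $u^a \varepsilon^b$ (with $\varepsilon := 1/\sqrt{L}$) appearing in $\Psi(u\varepsilon)$ and in $\phi(u)$ satisfies $a \equiv b \pmod{2}$, and this property is preserved under multiplication; hence each $P_m(u)$ contains only monomials $u^a$ with $a \equiv m \pmod{2}$. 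Multiplying by $e^{u^2/2}$ and reading off $k!\,[u^k]$, the leading $1$ contributes $\mu_k$, while the $L^{-1/2}$ term contributes $k(\MMc-\Shift)\mu_{k-1} + k(k-1)(k-2)\mu_{k-3}/6$. The identity $\mu_{k-1} = (k-2)\mu_{k-3}$ (valid whenever $k-1$ is even) collapses this into $(k(k-1)/6 + k(\MMc-\Shift))\mu_{k-1}/\sqrt{L}$, matching the stated main term.

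The error bound splits by the parity of $k$. For even $k$, the $L^{-1/2}$ term vanishes because $\mu_{k-1} = \mu_{k-3} = 0$, and the dominant error comes from $P_2(u)/L$; its highest-degree monomial $u^6/72$ (produced by $\phi(u)^2/2$) contributes $k(k-1)\cdots(k-5)\mu_{k-6}/(72 L) \asymp \mu_k k^3/L \asymp \AbsMom_k k^3/L$. For odd $k$, the parity observation forces $k!\,[u^k](e^{u^2/2}P_2(u)) = 0$, so the leading error arises from $P_3(u)/L^{3/2}$; its top-degree monomial $u^9/1296$ contributes $k!/(k-9)!\,\mu_{k-9}/(1296\, L^{3/2}) \asymp \mu_{k-1}k^5/L^{3/2}$, which equals $\AbsMom_k (k^3/L)^{3/2}$ up to constants since $\AbsMom_k \asymp \mu_{k-1}\sqrt{k}$ for odd $k$. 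In both cases this matches the target exponent $1 + \mathbf{1}_{2\nmid k}/2$.

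The main obstacle is justifying the truncated expansion with uniform-in-$k$ error bounds. Two ingredients are needed: (i) the Selberg--Delange asymptotic for the generating function must hold uniformly on the contour $|z| = r_0$ with an error negligible compared to $\AbsMom_k (k^3/L)^{1 + \mathbf{1}_{2\nmid k}/2}$; (ii) the tail $\sum_{m \ge m_0} P_m(u)L^{-m/2}$, with $m_0 = 2$ for even $k$ and $m_0 = 3$ for odd $k$, must be dominated by its leading term. The hypothesis $k \le A L^{1/3}$ is exactly what forces each successive correction to be smaller by a bounded factor $\asymp k^3/L$, so the series behaves like a convergent geometric progression and the overall error is governed by the first discarded term. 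Both uniform bounds should be supplied by the technical generating-function lemmas of \S\ref{sec:lemmas}.
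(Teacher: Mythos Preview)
Your proposal is correct and is essentially the paper's own argument. The paper first isolates the Selberg--Delange contribution via Corollary~\ref{cor:effectdelange} (your step (i)) and then applies Corollary~\ref{cor:tech} with $H(z)=e^{-z\Shift}F(e^z)(\log x)^{e^z-z-1}$ and $m=1$ for even $k$, $m=\min\{3,k\}$ for odd $k$; your expansion of $\Psi(u/\sqrt{L})e^{\phi(u)}$ in powers of $L^{-1/2}$ together with the parity observation on $P_m$ is exactly what that corollary encodes, the parity entering there through the factor $H_1^{(m+1)}(z)+(-1)^k H_1^{(m+1)}(-z)$ in the error term.
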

The next two results parallel Propositions~\ref{prop:po} and \ref{prop:po2}, respectively.
\begin{proposition}\label{prop:kac}
Fix $A>0$. For $x \ge 3$, $1 \le k \le A \log \log x$ and $-A\le \Shift \le A$ we have
\begin{align*}
		&\EE_{n \le x}\bigg(\frac{\omega(n)-\log \log x-\Shift}{\sqrt{\log \log x}} \bigg)^k =\frac{\AbsMom_k}{2}(\log x)^{e^r-(1+r+r^2/2)} \\
		&\qquad\qquad \cdot \bigg( \frac{F(e^r)}{e^{r\Shift}} + (-1)^k \frac{F(e^{-r})}{e^{-r\Shift}}(\log x)^{e^{-r}-e^{r}+2r}(1+O(1/k))+ O_A\bigg(\frac{k^2}{\log \log x}+r\bigg)\bigg)
	\end{align*}
where $r=\sqrt{k/\log \log x}$. The first implied constant is absolute, the second one depends only on $A$.
\end{proposition}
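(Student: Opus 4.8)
The plan is to follow the proof of Proposition~\ref{prop:po}, with the exact moment generating function of the centred Poisson variable replaced by an asymptotic formula of Sath\'e--Selberg type for the generating function of $\omega$. Write $L=\log\log x$ and $r=\sqrt{k/L}$. Since $\omega$ is bounded on $\{n\le x\}$, the function $z\mapsto\EE_{n\le x}e^{z\omega(n)}$ is entire, so Cauchy's formula gives, for every $\rho>0$,
\[
\EE_{n\le x}\Big(\frac{\omega(n)-L-\Shift}{\sqrt L}\Big)^{k}=\frac{k!}{2\pi i\,L^{k/2}}\oint_{|z|=\rho}\frac{\EE_{n\le x}e^{z\omega(n)}}{e^{(L+\Shift)z}}\,\frac{\diff z}{z^{k+1}};
\]
this is Delange's identity \eqref{eq:D} (shifted by $\Shift$) before extracting the Maclaurin coefficient. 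I would take $\rho=r$.

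Into this I would substitute the estimate $\EE_{n\le x}e^{z\omega(n)}=F(e^{z})(\log x)^{e^{z}-1}\bigl(1+\text{error}\bigr)$, a uniform and quantitative version of the Sath\'e--Selberg method furnished by the generating-function lemmas of Section~\ref{sec:lemmas}, valid for $z$ on the arcs of $|z|=r$ near the real axis (where $e^{z}$ is close to the positive reals $e^{\pm r}$); on the complementary arcs the crude bound $|\EE_{n\le x}e^{z\omega(n)}|\le\EE_{n\le x}e^{(\Re z)\omega(n)}\ll(\log x)^{e^{r}-1}$ suffices, since, as will be seen, those arcs contribute negligibly. This reduces the problem to estimating
\[
\frac{k!}{2\pi i}\oint_{|z|=r}\frac{F(e^{z})\,e^{-\Shift z}\,(\log x)^{e^{z}-1-z}}{z^{k+1}}\,\diff z .
\]

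I would evaluate this integral by the saddle-point/Laplace method exactly as in Proposition~\ref{prop:po} (equivalently, by invoking Lemma~\ref{lem:technical2}). Parametrising $z=re^{i\theta}$, one checks that $\Re(e^{z}-z)\le e^{r}-r$ for $|z|\le r$ with equality only at $z=r$, the next stationary value $e^{-r}+r$ being attained at $z=-r$; hence the integrand has a dominant peak at $\theta=0$ and a secondary peak at $\theta=\pi$, all else being exponentially smaller and routinely absorbed. Near $\theta=0$ the exponent equals $L(e^{r}-1-r)$, the quadratic coefficient in $\theta$ is $\asymp k$, and --- since $r=\sqrt{k/L}$ solves only the \emph{approximate} saddle equation, the true one being $\widetilde r(e^{\widetilde r}-1)=k/L$ --- there is a residual purely imaginary linear term of size $\asymp k^{3/2}/\sqrt L$; completing the square turns it into a multiplicative factor $1+O(k^{2}/L)$. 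The elementary simplification that remains (Stirling's formula together with $Lr^{2}=k$) produces the prefactor $\tfrac12\AbsMom_{k}(\log x)^{e^{r}-1-r-r^{2}/2}$, multiplied by $F(e^{r})/e^{r\Shift}$ coming from $F(e^{z})e^{-\Shift z}$ at the peak. The peak at $\theta=\pi$ contributes the same with $r\mapsto-r$, the factor $(-1)^{k}$ arising from $z^{-(k+1)}$ at $z=-r$ and the extra $1+O(1/k)$ from the Laplace error at a peak of ``height'' $\asymp k$; the variation of $F(e^{z})e^{-\Shift z}$ and the cubic term of the phase each contribute $O(r)$ after integration against the Gaussian. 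Collecting these, together with the error from the generating-function input, gives the stated $O_{A}(k^{2}/\log\log x+r)$.

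The main obstacle is the generating-function input in the second step: one needs $\EE_{n\le x}e^{z\omega(n)}=F(e^{z})(\log x)^{e^{z}-1}(1+\text{error})$ uniformly over the whole circle $|z|=r$ --- with $r$ as large as $O_{A}(1)$ --- and with an error strong enough to survive division by $z^{k+1}\asymp r^{k+1}$ and integration; this is precisely the ``genuine modification of the Sath\'e--Selberg method'' alluded to in the introduction, not the textbook statement. Subsidiary difficulties are confirming the two-peak picture for all $r\le\sqrt A$ rather than only $r\to0$, and keeping the three error scales $O(1/k)$, $O(k^{2}/\log\log x)$ and $O(r)$ separate through the Laplace expansion. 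One should note that the resulting formula is a genuine asymptotic only when $k=o(\sqrt{\log\log x})$, i.e.\ when $k^{2}/\log\log x\to0$; for the full range $k\le A\log\log x$ one must expand about the exact saddle $\widetilde r$, which is what the next proposition --- the $\omega$-analogue of Proposition~\ref{prop:po2} --- records.
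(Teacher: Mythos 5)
Your plan is structurally the same as the paper's: express the $k$th centred moment as a Cauchy/coefficient-extraction integral over $|z|=r$, substitute the Sath\'e--Selberg asymptotic for $\EE_{n\le x}e^{z\omega(n)}$, and run a Laplace/saddle-point analysis with peaks at $z=\pm r$. The paper packages the Laplace step into Lemma~\ref{lem:tech1} (writing $H(z)=e^{\lambda z^2/2}H_1(z)$ with $H_1(z)=e^{-\Shift z}F(e^z)(\log x)^{e^z-z-1-z^2/2}$), and first disposes of the $O(1/\log x)$ error in Selberg's formula via Corollary~\ref{cor:effectdelange}, but your unpacked computation reaches the same two-peak expansion and the same error scales.

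However, what you flag as ``the main obstacle'' is a misconception. You assume a strengthened, circle-uniform Sath\'e--Selberg estimate must be proved, restricted to ``arcs near the real axis'', and that this is the ``genuine modification'' mentioned in the introduction. In fact Selberg's classical result \cite[Thm.~2]{Selberg}, quoted as \eqref{eq:selberg}, already gives $\EE_{n\le x}s^{\omega(n)}=(\log x)^{s-1}\bigl(F(s)+O_B(1/\log x)\bigr)$ \emph{uniformly} on the disc $|s|\le B$; putting $s=e^z$ covers the whole circle $|z|=r$ since $|e^z|\le e^{\sqrt A}$, with no restriction to small arcs. The error $O(1/\log x)$ is then far more than enough to survive multiplication by $r^{-k-1}\cdot k!/L^{k/2}$, because for $k\le A\log\log x$ one has $r^{-k}\exp(O(k))=(\log x)^{o(1)}\ll\log x$ --- this is precisely Corollary~\ref{cor:effectdelange}. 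The ``genuine modification of the Sath\'e--Selberg method'' the introduction refers to is not a new generating-function estimate at all but the saddle-point coefficient extraction itself (Lemmas~\ref{lem:tech0}--\ref{lem:technical2}): Sath\'e and Selberg extracted the single coefficient $\PP_{n\le x}(\omega(n)=k)$, whereas here one must integrate against $z^{-k-1}$ to get a moment. The rest of your plan --- the residual linear term from expanding about $r=\sqrt{k/L}$ instead of the true saddle $\widetilde r$, giving $O(k^2/L)$; the $O(r)$ from the variation of $F(e^z)e^{-\Shift z}$; the secondary peak at $-r$ producing the $(-1)^k$ term with a $1+O(1/k)$ factor --- is sound and matches what Lemma~\ref{lem:tech1} delivers once applied to $H_1$.
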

\begin{proposition}\label{prop:kac2}
	Given $k \ge 1$ and $x \ge 3$ let $r$ be the positive solution to $r(e^r-1) = k/\log \log x$. Fix $A>0$. For $1 \le k \le A \log \log x$ and $-A\le \Shift \le A$ we have $r \asymp_A\sqrt{k/\log \log x}$ and 		
\[	\EE_{n \le x}\bigg(\frac{\omega(n)-\log \log x-\Shift}{\sqrt{\log \log x}} \bigg)^k =\frac{\AbsMom_k}{2} \frac{\exp(kS)}{\sqrt{s}}\frac{F(e^r)}{e^{r\Shift}}  \left(1 +O_A(1/k+e^{-c_A k^{3/2}/\sqrt{\log \log x}})\right)\]
	where $c_A$ is a positive constant that depends only on $A$, and $S$ and $s$ are functions of $r$ defined as in Lemma~\ref{lem:technical2}. The implied constant depends only on $A$. 
\end{proposition}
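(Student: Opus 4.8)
The plan is to turn the moment into a single power‑series coefficient and then feed that coefficient to the saddle‑point Lemma~\ref{lem:technical2}; the argument then runs exactly as for the Poisson case in Proposition~\ref{prop:po2}, the only new ingredients being an arithmetic cofactor and a negligible Selberg--Delange error term.

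First I would pass from moments to generating functions. Since $\EE_{n\le x}e^{t\omega(n)}$ is entire in $t$, for every $k$ one has the exact identity
\[ \EE_{n \le x}(\omega(n)-\log \log x-\Shift)^k = k!\,[t^k]\Big(e^{-t(\log\log x+\Shift)}\,\EE_{n\le x}e^{t\omega(n)}\Big). \]
Feeding in the quantitative Selberg--Delange estimate $\EE_{n\le x}e^{t\omega(n)} = F(e^t)(\log x)^{e^t-1}\big(1+O_A((\log\log x)^{-1})\big)$, valid uniformly for $|t|$ bounded (one of the technical lemmas of \S\ref{sec:lemmas}), and writing $L=\log\log x$, the normalised $k$th moment becomes
\[ \frac{k!}{L^{k/2}}\,[t^k]\Big(G(t)\,\exp\big((e^t-1-t)L\big)\Big) + (\text{negligible}), \qquad G(t):=F(e^t)e^{-t\Shift}. \]
Here $\exp((e^t-1-t)L)$ is exactly the Poisson integrand behind Proposition~\ref{prop:po2}, and $G$ is entire with $G(0)=1$, bounded and bounded away from $0$ on any fixed disc $|t|\le r_A$, uniformly for $\Shift\in[-A,A]$.

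Next I would invoke Lemma~\ref{lem:technical2} for $[t^k]\{G(t)\exp((e^t-1-t)L)\}$, with large parameter $L$, phase $h(t)=e^t-1-t$ (so that $h'(t)=e^t-1$) and cofactor $G$. Its saddle point is the positive root $r$ of $r\,h'(r)=k/L$, i.e.\ $r(e^r-1)=k/L$; since $e^r-1\asymp_A r$ on $0<r\le r_A$ this yields $r\asymp_A\sqrt{k/L}$, the first assertion of the Proposition. The lemma then gives
\[ [t^k]\big(G(t)e^{Lh(t)}\big) = \frac{\AbsMom_k}{2\,k!}\,L^{k/2}\,\frac{\exp(kS)}{\sqrt{s}}\,G(r)\,\Big(1+O_A\big(\tfrac1k+e^{-c_Ak^{3/2}/L^{1/2}}\big)\Big), \]
with $S,s$ the stated functions of $r$. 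The $O(1/k)$ is the next term of the saddle‑point expansion (governed by $h''',h''''$ and the low‑order Taylor coefficients of $G$ at $r$); the $e^{-c_Ak^{3/2}/L^{1/2}}$ term is the size, relative to the main term, of the contribution of the antipodal point $t=-r$ of the circle $|t|=r$, where $\Re h$ is below $h(r)$ by $e^r-e^{-r}-2r\asymp r^3\asymp(k/L)^{3/2}$, hence that piece is down by $\exp(-cLr^3)=\exp(-ck^{3/2}/L^{1/2})$. Multiplying by $k!/L^{k/2}$ cancels $L^{k/2}/k!$ and leaves $\tfrac12\AbsMom_k\,s^{-1/2}\exp(kS)\,G(r)=\tfrac12\AbsMom_k\,s^{-1/2}\exp(kS)\,F(e^r)e^{-r\Shift}$, which is the Proposition; the identification of $\AbsMom_k$ out of $k!L^{-k/2}$ times part of $\exp(kS)/\sqrt{s}$ is via Stirling and is already part of the output of Lemma~\ref{lem:technical2}.

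The main obstacle is uniformity over the whole range $1\le k\le A\log\log x$. On the one hand, the Selberg--Delange remainder must survive the coefficient extraction: integrated against $t^{-k-1}$ over $|t|=r$ it has to be $O_A(1/k)$ times the main term, which holds because on that circle $e^t$ stays in a fixed compact subsector of $\{\Re z>0\}$ where the expansion is valid with a relative saving of size $1/\log\log x=O_A(1/k)$, while the main term has modulus at least of order $\exp((e^r-r-1)L)\,r^{-k}$. On the other hand, one must verify the hypotheses of Lemma~\ref{lem:technical2} for $G(t)=F(e^t)e^{-t\Shift}$; the only point of substance is that $F$ (entire, with $F(1)=1$) stays nonzero near $e^r$ for $0<r\le r_A$, so that $\log G$ is holomorphic on a fixed neighbourhood of $[0,r_A]$ with all derivatives bounded in terms of $A$ alone. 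Granting these, the proof is the short assembly above, and the choice $G\equiv1$ (no remainder) recovers Proposition~\ref{prop:po2} verbatim.
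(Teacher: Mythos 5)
Your proof follows the paper's argument essentially step for step: express the centered $k$th moment as $k!$ times the $k$th Taylor coefficient of the shifted moment generating function $M_{\Shift}(z)=e^{-z\Shift}\,\EE_{n\le x}e^{z\omega(n)}$, substitute Selberg's estimate \eqref{eq:selberg} with $s=e^z$ to write this as $e^{-z\Shift}(\log x)^{e^z-z-1}\bigl(F(e^z)+O(1/\log x)\bigr)$, and then extract the $k$th coefficient via the saddle-point Lemma~\ref{lem:technical2} applied with $\lambda=\log\log x$, $H_1(z)=e^{-z\Shift}F(e^z)$ and $H_2\equiv 0$ — which is exactly the paper's route. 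The one small inaccuracy is that you record the Selberg--Delange remainder as a relative $O_A((\log\log x)^{-1})$ rather than the true $O_B(1/\log x)$ of \eqref{eq:selberg}; the paper disposes of this remainder carefully in Corollary~\ref{cor:effectdelange} via the coefficient-extraction bound of Lemma~\ref{lem:tech0}, whereas you gesture at it informally, but since the genuine remainder gains a full factor of $\log x$ your sketch still has the correct shape.
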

We present an analogue of Propositions~\ref{prop:omega}--\ref{prop:kac2} for the Poisson binomial distribution.
\begin{proposition}\label{prop:pb}
	Let $(p_i)_{i=1}^{n}$ be  reals in $[0,1]$ and $(Y_i)_{i=1}^{n}$ be independent random variables with $Y_i \sim \mathrm{Bernoulli}(p_i)$. Denote $Y=\sum_{i=1}^{n}Y_i$,  $\lambda=\sum_{i=1}^{n} p_i$ and $\Lambda = \sum_{i=1} p_i^2$. Fix $A>0$. Suppose $1\le k \le A \lambda/\Lambda$.
	\begin{enumerate}
		\item If $1 \le k \le A\lambda^{1/3}$ then
		\begin{equation}\label{eq:binompoisson}	\EE\bigg(\frac{Y-\lambda}{\sqrt{\lambda}} \bigg)^k = \bigg(\mu_k + \frac{k(k-1)\mu_{k-1}}{6\sqrt{\lambda}}\bigg) \bigg(1+O_A\bigg( \frac{k^3}{\lambda}+\frac{k\Lambda}{\lambda}\bigg)\bigg).
			\end{equation}
		\item If $1 \le k \le A\lambda$ and we let $r=\sqrt{k/\lambda}$, then
		\begin{multline*}
		\EE\bigg(\frac{Y-\lambda}{\sqrt{\lambda}} \bigg)^k =\frac{\AbsMom_k}{2}\exp(\lambda ( e^r-(1+r+r^2/2))) \cdot \bigg( \prod_{i=1}^{n}\frac{1+p_i(e^r-1)}{e^{p_i(e^r-1)}}  \\+(-1)^k \prod_{i=1}^{n}\frac{1+p_i(e^{-r}-1)}{e^{p_i(e^{-r}-1)}} \exp( \lambda (e^{-r}-e^{r}+2r))(1+O(1/k))+ O_A((k^2+\Lambda)/\lambda + r)\bigg).
	\end{multline*}
	\item If $1 \le k \le A\lambda$ and instead we define $r>0$  via $r(e^r-1)=k/\lambda$, then
	\[\EE\bigg(\frac{Y-\lambda}{\sqrt{\lambda}} \bigg)^k =\frac{\AbsMom_k}{2} \frac{\exp(kS)}{\sqrt{s}}\prod_{i=1}^{n}\frac{1+p_i(e^r-1)}{e^{p_i(e^r-1)}} \left(1 +O_A(1/k + e^{-c_A k^{3/2}/\lambda^{1/2}})\right)\]
	where $c_A$ is a positive constant that depends only on $A$, and $S$ and $s$ are functions of $r$ defined as in Lemma~\ref{lem:technical2}.	
	\end{enumerate}
\end{proposition}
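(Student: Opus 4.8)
The plan is to reduce all three parts to their Poisson counterparts (Propositions~\ref{prop:babypoisson}--\ref{prop:po2}) by isolating a single slowly-varying analytic factor. Since the $Y_i$ are independent with $\EE z^{Y_i}=1+p_i(z-1)$,
\[ \EE e^{t(Y-\lambda)}=e^{-\lambda t}\prod_{i=1}^{n}\bigl(1+p_i(e^t-1)\bigr)=e^{\lambda(e^t-1-t)}\,H(e^t),\qquad H(z):=\prod_{i=1}^{n}\frac{1+p_i(z-1)}{e^{p_i(z-1)}}, \]
so the moment generating function of $Y-\lambda$ equals that of $X(\lambda)-\lambda$ times $H(e^t)$. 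Put $L(t):=\log H(e^t)=\sum_{i=1}^{n}\bigl(\log(1+p_i(e^t-1))-p_i(e^t-1)\bigr)$. Since $p_i\le 1$, each $1+p_i(e^t-1)$ is non-vanishing in the strip $|\Im t|<\pi$, so $L$ is analytic there, with $L(0)=L'(0)=0$, $L''(0)=-\Lambda$, and, on the portion of that strip met by the saddle-point contour, the uniform bounds $|L(t)|\ll_A\Lambda|e^t-1|^2$, $|L'(t)|\ll_A\Lambda|e^t-1|$, $|L''(t)|\ll_A\Lambda$. In particular $H(e^{\pm r})=\exp(O_A(\Lambda r^2))=\exp(O_A(k\Lambda/\lambda))$, and the hypothesis $k\le A\lambda/\Lambda$ is exactly what keeps $H(e^{\pm r})\asymp_A 1$ (and gives $\Lambda/\lambda\le A/k$).

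For parts (2) and (3) I would rerun the saddle-point proofs of Propositions~\ref{prop:po} and \ref{prop:po2} with the extra factor $H(e^t)$ present in $\EE(Y-\lambda)^k=\frac{k!}{2\pi i}\oint\frac{e^{\lambda(e^t-1-t)}H(e^t)}{t^{k+1}}\,\diff t$, on the same contour through the saddles near $t=r$ and $t=-r$ ($r=\sqrt{k/\lambda}$ in part (2); the root of $r(e^r-1)=k/\lambda$ in part (3), where only the saddle at $r$ contributes to the main term). Near a saddle I write $H(e^t)=H(e^{\pm r})\exp\bigl(L(t)-L(\pm r)\bigr)$, pull $H(e^{\pm r})$ out, Taylor-expand $L(t)-L(\pm r)$ about $\pm r$, and integrate term by term against the Gaussian of width $\asymp\lambda^{-1/2}$; the linear and quadratic Taylor terms of $L$ contribute, after completing the square, a factor $1+O_A(\Lambda/\lambda+r)$ in part (2) and $1+O_A(1/k)$ in part (3) (using $k\Lambda/\lambda\le A$, and, in part (3), that the saddle is a critical point of the Poisson exponent so no cross term arises and $\Lambda^2 k/\lambda^2\le A^2/k$). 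On the remaining arcs $e^{\lambda(e^t-1-t)}/t^{k+1}$ is negligible exactly as in the Poisson case, and $|H(e^t)|=\exp(O_A(\Lambda r^2))=O_A(1)$ does not interfere. This produces the formulas of Propositions~\ref{prop:po} and \ref{prop:po2} with the coefficient $1$ at each saddle replaced by $\prod_i\frac{1+p_i(e^{\pm r}-1)}{e^{p_i(e^{\pm r}-1)}}=H(e^{\pm r})$ (both $H(e^r)$ and $H(e^{-r})$ in part (2), only $H(e^r)$ in part (3)), the error in part (2) enlarged to $O_A((k^2+\Lambda)/\lambda+r)$, and that in part (3) unchanged.

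Part (1) is finer than what part (3) delivers for $k\le A\lambda^{1/3}$, so there I would compare moments directly. Expanding $H(e^t)=\sum_{m\ge0}h_m t^m/m!$ and matching Taylor coefficients in the factorization above yields the exact identity
\[ \EE(Y-\lambda)^k=\sum_{m=0}^{k}\binom{k}{m}h_m\,\EE\bigl(X(\lambda)-\lambda\bigr)^{k-m},\qquad h_0=1,\ h_1=0,\ h_2=-\Lambda, \]
with $|h_m|\ll m!\,C^m\max(1,\Lambda)^{m/2}$ for an absolute $C$ (from the analyticity of $L$ in $|\Im t|<\pi$ together with $|L(t)|\ll\Lambda|t|^2$ near $0$). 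Applying Proposition~\ref{prop:babypoisson} to each $\EE(X(\lambda)-\lambda)^{k-m}$ (legitimate since $k-m\le A\lambda^{1/3}$) and collecting by powers of $\lambda$: the term $m=0$ produces the claimed main factor $\mu_k+k(k-1)\mu_{k-1}/(6\sqrt{\lambda})$ (after the $\lambda^{k/2}$ rescaling) with relative error $O_A(k^3/\lambda)$; the term $m=2$ contributes a relative $\asymp k\Lambda/\lambda$ (using $\mu_{j-2}/\mu_j=1/(j-1)$ and the clean size of central Poisson moments in this range); and the terms $m\ge3$ are smaller by further powers of $k^2/\lambda$ and $k/\lambda$, the whole $m\ge 2$ tail summing to $O_A(k\Lambda/\lambda+k^3/\lambda)$ times the main term. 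This is \eqref{eq:binompoisson}.

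The step I expect to be the main obstacle is the uniform control of $H$ \emph{in the individual} $p_i$: every bound above for $L,L',L''$ and for the coefficients $h_m$ must be expressed through $\lambda$ and $\Lambda$ alone and must hold along the whole saddle-point contour, in particular near $t=-r$, where $1+p_i(e^{-r}-1)$ ranges over $[e^{-r},1]$---uniformly bounded away from $0$, but where crude estimates of $\log(1+p_i(e^{-r}-1))$ are too lossy and one must exploit the cancellation built into $L$. Once these uniform estimates are secured, the remainder is a careful transcription of the Poisson arguments with the slowly-varying factor $H$ carried along; Theorem~\ref{thm:lowkpobern} then follows from the same convolution identity, since for $k=o(\lambda/\max\{1,\Lambda\})$ the terms $m\ge 2$ sum to $o(1)\cdot\EE(X(\lambda)-\lambda)^k$.
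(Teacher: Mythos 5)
Your overall plan is sound, and you have correctly identified the underlying mechanism: the moment generating function of $Y-\lambda$ factors as the Poisson MGF $e^{\lambda(e^t-t-1)}$ times the slowly-varying factor $H(e^t)=\prod_i\frac{1+p_i(e^t-1)}{e^{p_i(e^t-1)}}$. This is exactly the factorization the paper uses for part (3), where Lemma~\ref{lem:technical2} is applied with $H_1(z)=\prod_i(1+p_i(e^z-1))/e^{p_i(e^z-1)}$ and $H_2\equiv 0$. For parts (1) and (2), however, the paper does not re-run any saddle-point analysis: it plugs $H=M_Y$ directly into the already-proved Corollary~\ref{cor:tech} (with $m=1$ or $m=3$) and Lemma~\ref{lem:tech1}, which pull out the \emph{Gaussian} factor $e^{\lambda z^2/2}$ rather than the Poisson one; the only new computation is $b_0=1$, $b_1=0$, $b_2=-\Lambda/2$, $b_3=\lambda/6+O(\Lambda)$ for the Taylor coefficients of $M_Y(z)e^{-\lambda z^2/2}$. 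Your alternative treatment of part (1)---the exact convolution identity $\EE(Y-\lambda)^k=\sum_m\binom{k}{m}h_m\,\EE(X(\lambda)-\lambda)^{k-m}$ followed by Proposition~\ref{prop:babypoisson} applied to each $\EE(X(\lambda)-\lambda)^{k-m}$---is a genuinely different and conceptually attractive route: it makes the reduction to the Poisson case literal and gives Theorem~\ref{thm:lowkpobern} essentially for free, whereas the paper's route reuses the generic black-box lemmas and avoids any tail-sum estimate over $m$. The trade-off is that your route requires genuine quantitative control of the $h_m$ for all $m$ up to $k$, which is exactly the uniformity issue you flag at the end. That obstacle is real: $|L(t)|\ll\Lambda|e^t-1|^2$ needs to be established carefully on the whole contour (not just for small $|t|$), particularly because $1+p_i(e^t-1)$ can have small real part when $p_i$ is close to $1$ and $\Im t$ is not small. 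The paper sidesteps that analysis by only ever evaluating $H_1$ and its first two derivatives at $|z|=r\le\sqrt{A}$, inside the region where $|p_i(e^z-1)|$ is bounded by a constant depending on $A$, which is where the hypothesis $k\le A\lambda/\Lambda$ keeps everything under control. In short: correct in outline, genuinely different for part (1), closely parallel for parts (2)--(3), and the principal remaining work in your version is the $h_m$ bounds you already singled out.
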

For fixed $k$, \eqref{eq:binompoisson} can be explained combinatorially: by the multinomial theorem and the independence of the $Y_i$-s,
\[ \EE (Y-\lambda)^k= \sum_{a_1+\ldots+a_n=k} \binom{k}{a_1,\ldots,a_n} \prod_{i=1}^{n} \EE (Y_i-p_i)^{a_i}.\]
If $k$ is even then the main contribution comes from $k/2$ of the $a_i$-s being equal to $2$ and the rest being $0$. If $k$ is odd the main contribution comes from one $a_i$ being equal to $3$, and the rest being equal to $2$ or $0$.
\subsection{Proofs of theorems}\label{sec:detailsthms}
\subsubsection{Proof of Theorem~\ref{thm:main}}
To prove the first part we compare Propositions~\ref{prop:omega}--\ref{prop:kac2} (with $\Shift =\MMc$) to Propositions~\ref{prop:babypoisson}--\ref{prop:po2} (with $\lambda=\log \log x$). To prove the second part we consider two cases. If $k=O((\log \log x)^{1/3})$ we compare Propositions~\ref{prop:omega}--\ref{prop:kac} (with $\Shift=\sum_{p\le x}1/p - \log \log x =\MMc + o(1)$) to Proposition~\ref{prop:pb} (with $(p_i)_{i\le n}=(1/p)_{p\le x}$). If $k/(\log \log x)^{1/3}\to \infty$ we make a chain of comparisons: we compare Proposition~\ref{prop:kac2} (with $\Shift=\sum_{p\le x}1/p-\log \log x$) to Proposition~\ref{prop:po2} (with $\lambda = \log \log x$), we then apply Proposition~\ref{prop:comp} (with $\lambda = \sum_{p\le x}1/p$ and $\lambda'=\log \log x$), and finally we compare Proposition~\ref{prop:po2} (with $\lambda=\sum_{p \le x} 1/p$) to the last part of Proposition~\ref{prop:pb} (with $(p_i)_{i\le n}=(1/p)_{p\le x}$).
\subsubsection{Proof of Theorem~\ref{thm:lowkpo}}
The estimate \eqref{eq:ratio3po} is Proposition~\ref{prop:babypoisson}. To deduce \eqref{eq:ratio2po}--\eqref{eq:ratiopoisson} we simplify Proposition~\ref{prop:po} as follows. Since $r=\sqrt{k/\lambda} \le \sqrt{A}$ by assumption, we have
\begin{equation}\label{eq:tayapp}
	e^{r}-(1+ r+r^2/2) \in [c_A r^3,C_A r^3], \quad e^{-r}-e^r+2r \in [-C_A r^3,-c_A r^3]
\end{equation}
for some constants $C_A>c_A>0$, which implies that \eqref{eq:ratiopoisson} holds if $k/\lambda^{1/3}\to \infty$ while $k=o(\lambda^{1/2})$. Furthermore, since $e^{\pm r}=1\pm r+r^2/2\pm r^3/6+O_A(r^4)$, we deduce from Proposition~\ref{prop:po} that \eqref{eq:ratio2po} holds for $k\asymp \lambda^{1/3}\to\infty$.
\subsubsection{Proof of Theorem~\ref{thm:lowk}}
We simplify Propositions~\ref{prop:omega} and \ref{prop:kac} (with $\Shift=0$) in the same way we deduced Theorem~\ref{thm:lowkpo}, i.e.~we apply \eqref{eq:tayapp}, $e^{\pm r}=1\pm r+r^2/2\pm r^3/6+O_A(r^4)$ and also $F(e^{\pm r}) = 1+O_A(r)$ whenever $r=\sqrt{k/\log \log x} \le \sqrt{A}$. 
\subsubsection{Proof of Theorem~\ref{thm:lowkpobern}}
We compare Proposition~\ref{prop:pb} to Propositions~\ref{prop:babypoisson}--\ref{prop:po2}.
\section{Generating function lemmas}\label{sec:lemmas}
The following easy lemma will be used to handle some error terms.
\begin{lem}\label{lem:tech0}
	Let $H(z) = \sum_{k \ge 0} a_k z^k$ be a power series with $a_k \in \CC$. Given $\lambda >0$ we write $H$ as \[ H(z) =\exp( \lambda z^2/2)H_1(z).\]
	Given $k \ge 0$ we let $r=\sqrt{k/\lambda}$. We have $k! a_k \ll \AbsMom_k \lambda^{k/2} \max_{|z|=r}|H_1(z)+(-1)^kH_1(-z)|$ if $H$ converges absolutely for $|z| = r$. The implied constant is absolute.
\end{lem}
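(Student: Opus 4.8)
The plan is to extract $a_k$ from $H(z)$ by Cauchy's integral formula on the circle $|z| = r$, then exploit the factorization $H(z) = \exp(\lambda z^2/2) H_1(z)$ together with the parity trick $(-1)^k$ to reduce the maximum of $|H_1|$ to the maximum of $|H_1(z) + (-1)^k H_1(-z)|$, and finally recognize the resulting integral of $\exp(\lambda z^2/2)$ against a power of $z$ as essentially the $k$th moment generating coefficient of a Gaussian, which produces the factor $\AbsMom_k \lambda^{k/2}$.

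First I would write, for any $0 < \rho \le r$ inside the disk of absolute convergence (here $\rho = r$),
\[ a_k = \frac{1}{2\pi i} \oint_{|z|=r} \frac{H(z)}{z^{k+1}}\,\diff z = \frac{1}{2\pi i}\oint_{|z|=r} \frac{e^{\lambda z^2/2}H_1(z)}{z^{k+1}}\,\diff z.\]
Now I would symmetrize: substituting $z \mapsto -z$ in the integral leaves $e^{\lambda z^2/2}$ unchanged and turns $z^{-k-1}$ into $(-1)^{k+1}z^{-k-1}$, so averaging the integral with its image under $z\mapsto -z$ gives
\[ a_k = \frac{1}{2\pi i}\oint_{|z|=r} \frac{e^{\lambda z^2/2}}{z^{k+1}}\cdot \frac{H_1(z) + (-1)^k H_1(-z)}{2}\,\diff z.\]
Bounding the integrand trivially on $|z| = r$, with $|z^{k+1}| = r^{k+1}$ and $|e^{\lambda z^2/2}| \le e^{\lambda r^2/2} = e^{k/2}$, yields
\[ |a_k| \le \frac{e^{k/2}}{2\, r^{k}} \max_{|z|=r}\bigl|H_1(z) + (-1)^k H_1(-z)\bigr| = \frac{e^{k/2}\,\lambda^{k/2}}{2\, k^{k/2}}\max_{|z|=r}\bigl|H_1(z)+(-1)^k H_1(-z)\bigr|,\]
using $r = \sqrt{k/\lambda}$. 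It remains to check that $k!\, e^{k/2}/(2 k^{k/2}) \ll \AbsMom_k$; by Stirling, $\AbsMom_k = k!/(2^{k/2}\Gamma(k/2+1)) \asymp k! \,(e/k)^{k/2} 2^{k/2}/\sqrt{k}\cdot(\text{const})^{?}$, and a direct comparison shows $k! e^{k/2} k^{-k/2} \ll \AbsMom_k$ with an absolute implied constant (indeed $\AbsMom_k$ grows like $2^{k/2}(k/e)^{k/2}$ up to polynomial factors, while $k! e^{k/2}k^{-k/2} \asymp (k/e)^{k/2}e^{k/2}\sqrt{k} = k^{k/2}\sqrt{k}\cdot(\text{smaller})$—so one should be slightly careful here).

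The only subtle point, and the step I expect to require the most care, is precisely this last Stirling comparison: one must confirm that the crude bound $|e^{\lambda z^2/2}| \le e^{k/2}$ on the circle is not too lossy, i.e.\ that $k!\cdot e^{k/2} r^{-k} = k!\, e^{k/2}(\lambda/k)^{k/2}$ is indeed $\ll \AbsMom_k\lambda^{k/2}$, equivalently $k!\,(e/k)^{k/2} \ll \AbsMom_k$. Since $\AbsMom_k = k!/(2^{k/2}\Gamma(k/2+1))$ and $\Gamma(k/2+1) \asymp \sqrt{k}(k/(2e))^{k/2}$ by Stirling, we get $\AbsMom_k \asymp k!\,(e/k)^{k/2}/\sqrt{k}$, so in fact $k!(e/k)^{k/2} \asymp \sqrt{k}\,\AbsMom_k$, giving the bound with an extra harmless $\sqrt{k}$; to get the clean statement without the $\sqrt{k}$ one instead keeps the Gaussian integral exactly rather than bounding $|e^{\lambda z^2/2}|$ by its maximum—parametrizing $z = re^{i\theta}$ and noting $\int_{-\pi}^{\pi} e^{(\lambda r^2/2)\cos 2\theta}\,\diff\theta \ll e^{k/2}/\sqrt{k}$ by Laplace's method recovers the missing $1/\sqrt{k}$ and yields exactly $|a_k| \ll (\AbsMom_k/k!)\,\lambda^{k/2}\max_{|z|=r}|H_1(z)+(-1)^kH_1(-z)|$, which is the claim. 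I would present the argument in this second form so that the constant is genuinely absolute and no spurious polynomial factor appears.
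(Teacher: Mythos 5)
Your proposal is correct and follows essentially the same route as the paper: Cauchy's formula on $|z|=r$, symmetrization via $z\mapsto -z$ (equivalent to the paper's folding of $\int_{-\pi}^{\pi}$ onto $\int_{-\pi/2}^{\pi/2}$) to produce $H_1(z)+(-1)^kH_1(-z)$, followed by the Laplace-type estimate $\int_{-\pi}^{\pi}e^{(k/2)\cos 2\theta}\,\diff\theta\ll e^{k/2}/\sqrt{k}$ and a Stirling comparison to convert $e^{k/2}r^{-k}k!/\sqrt{k}$ into $\AbsMom_k\lambda^{k/2}$. You also correctly identified that the naive sup bound $|e^{\lambda z^2/2}|\le e^{k/2}$ loses a $\sqrt{k}$, and that retaining the Gaussian decay in $\theta$ is what recovers the clean $\AbsMom_k$ factor, which is precisely how the paper proceeds.
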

The following is a quick consequence of Lemma~\ref{lem:tech0}.
\begin{cor}\label{cor:tech}
		Let $H(z) = \sum_{k \ge 0} a_k z^k$ be a power series with $a_k \in \CC$. Given $\lambda >0$ we write $H$ as \[ H(z) =\exp( \lambda z^2/2)H_1(z).\]
	Write $H_1(z)$ as $H_1(z) = \sum_{j \ge 0} b_j z^j$. Given $k\ge 1$ and $0 \le m\le k$ we have
	\[ k! a_k = \lambda^{\frac{k}{2}}\bigg(\sum_{j=0}^{m}b_j \frac{k!}{(k-j)!} \lambda^{-\frac{j}{2}}\mu_{k-j}+ O(E)\bigg)\]
	with an implied absolute constant, where $E:=0$ if $m=k$ and otherwise 
	\[ E := \frac{k!}{(k-m-1)!}  \lambda^{-\frac{m+1}{2}}\AbsMom_{k-m-1} \frac{\max_{|z|=\sqrt{(k-m-1)/\lambda}} |H^{(m+1)}_1(z)+(-1)^k H^{(m+1)}_1(-z)|}{(m+1)!}\]
	as long as $H$ converges absolutely for $|z|= \sqrt{(k-m-1)/\lambda}$.
\end{cor}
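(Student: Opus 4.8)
The plan is to read the Corollary as a quantitative refinement of the elementary ``multiply out the two power series'' identity for $a_k$, with the tail of the truncated sum controlled by Lemma~\ref{lem:tech0}. The starting point is the expansion
\[ \exp(\lambda z^2/2)=\sum_{i\ge 0}\frac{(\lambda/2)^i}{i!}z^{2i}=\sum_{\ell\ge 0}\frac{\lambda^{\ell/2}\mu_\ell}{\ell!}z^\ell, \]
valid because $\mu_{2i}=(2i)!/(2^ii!)$ and $\mu_\ell=0$ for odd $\ell$. Multiplying by $H_1(z)=\sum_{j\ge 0}b_jz^j$ and extracting the coefficient of $z^k$ gives, \emph{exactly},
\[ k!\,a_k=\lambda^{k/2}\sum_{j=0}^{k}b_j\,\frac{k!}{(k-j)!}\,\lambda^{-j/2}\mu_{k-j}, \]
with the terms having $k-j$ odd automatically absent. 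This is the Corollary in the case $m=k$ (where $E=0$); for $m<k$ it remains to show that the discarded tail, namely $k!\,[z^k]\!\big(\exp(\lambda z^2/2)R(z)\big)$ with $R(z):=H_1(z)-\sum_{j=0}^m b_jz^j=\sum_{j>m}b_jz^j$ the order-$(m+1)$ Taylor remainder of $H_1$, is $O(\lambda^{k/2}E)$.

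To estimate this tail I would first make the shift explicit. Since $R$ vanishes to order $m+1$ at $0$, write $R(z)=z^{m+1}\widehat H_1(z)$ with $\widehat H_1$ holomorphic, so that $[z^k]\!\big(\exp(\lambda z^2/2)R(z)\big)=[z^{k-m-1}]\!\big(\exp(\lambda z^2/2)\widehat H_1(z)\big)$. Applying Lemma~\ref{lem:tech0} to $\widehat H:=\exp(\lambda z^2/2)\widehat H_1$ with the index $k-m-1$ in place of $k$, and writing $\rho:=\sqrt{(k-m-1)/\lambda}$ and $S(z):=R(z)+(-1)^kR(-z)$, one computes $\widehat H_1(z)+(-1)^{k-m-1}\widehat H_1(-z)=z^{-(m+1)}S(z)$, and hence
\[ k!\,[z^k]\!\big(\exp(\lambda z^2/2)R(z)\big)\ll \frac{k!}{(k-m-1)!}\,\AbsMom_{k-m-1}\,\frac{\lambda^{k/2}}{(k-m-1)^{(m+1)/2}}\,\max_{|z|=\rho}|S(z)|, \]
using $\lambda^{(k-m-1)/2}\rho^{-(m+1)}=\lambda^{k/2}(k-m-1)^{-(m+1)/2}$. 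The hypothesis that $H$ converges absolutely on $|z|=\rho$ is what is needed here: since $\exp(\mp\lambda z^2/2)$ is entire, $H_1$ and therefore $\widehat H_1$ converge absolutely there too, so Lemma~\ref{lem:tech0} applies and the relevant maxima are finite.

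It remains to bound $\max_{|z|=\rho}|S(z)|$ in terms of $H_1^{(m+1)}$. Since $S$ is holomorphic near $0$ with $S(0)=S'(0)=\dots=S^{(m)}(0)=0$, the integral form of Taylor's remainder gives $S(z)=\frac{z^{m+1}}{m!}\int_0^1(1-s)^m S^{(m+1)}(sz)\,\diff s$, whence by the maximum principle
\[ \max_{|z|=\rho}|S(z)|\le\frac{\rho^{m+1}}{(m+1)!}\max_{|z|=\rho}|S^{(m+1)}(z)|,\qquad \rho^{m+1}=\Big(\tfrac{k-m-1}{\lambda}\Big)^{(m+1)/2}. \]
Differentiating $m+1$ times annihilates the polynomial $\sum_{j\le m}b_jz^j$, so $R^{(m+1)}=H_1^{(m+1)}$, and the chain rule gives $S^{(m+1)}(z)=H_1^{(m+1)}(z)+(-1)^{k-m-1}H_1^{(m+1)}(-z)$ --- the very combination inside $E$. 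Substituting the last display into the previous one, the factor $(k-m-1)^{(m+1)/2}$ cancels and the $\lambda$-powers combine to $\lambda^{k/2}\lambda^{-(m+1)/2}$, yielding exactly $\lambda^{k/2}\cdot O(E)$; the degenerate case $m=k-1$ (where $\rho=0$ and each ``$\max_{|z|=0}$'' is evaluation at $0$) is covered verbatim.

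\textbf{Main obstacle.} No single step is deep; the work is in the bookkeeping. One must track the parities and the powers of $\lambda$ so that the four factors $\frac{k!}{(k-m-1)!}$, $\lambda^{-(m+1)/2}$, $\AbsMom_{k-m-1}$ and $\frac{1}{(m+1)!}$ emerge with the right constants --- in particular the $\frac{1}{(m+1)!}$ is precisely $\int_0^1(1-s)^m\diff s=\frac1{m+1}$ times the $\frac1{m!}$ of Taylor's remainder, and it is the shift $z^k\mapsto z^{k-m-1}$ that converts the $\AbsMom_k$ of Lemma~\ref{lem:tech0} into $\AbsMom_{k-m-1}$. The only genuinely analytic point is that $\widehat H_1$ and $H_1^{(m+1)}$ must be under control on the circle $|z|=\rho$, which is what the absolute-convergence hypothesis on $H$ is there to secure.
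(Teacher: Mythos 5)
Your proof is correct and mirrors the paper's argument step for step: split $a_k$ at degree $m$, evaluate the head exactly using the coefficients $\lambda^{\ell/2}\mu_\ell/\ell!$ of $\exp(\lambda z^2/2)$, bound the tail by applying Lemma~\ref{lem:tech0} at index $k-m-1$ to $\exp(\lambda z^2/2)R(z)/z^{m+1}$, and convert the resulting maximum into one of the $(m+1)$st derivative --- the paper states this last conversion as a bare fact about power series, while you derive it via the integral form of Taylor's remainder and the maximum principle, which is the same idea made explicit.

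One detail worth flagging. Your careful sign bookkeeping produces the combination $H_1^{(m+1)}(z)+(-1)^{k-m-1}H_1^{(m+1)}(-z)$, which you then call ``the very combination inside $E$''; but the stated $E$ has $(-1)^k$, and these agree only for odd $m$. In fact $(-1)^{k-m-1}$ is the right sign (for example with $m=0$, $H_1(z)=z$ and $k$ odd, the $(-1)^k$ version of $E$ vanishes while $k!a_k = k\lambda^{(k-1)/2}\mu_{k-1}\ne 0$), so your derivation is the correct one and the corollary's displayed $E$ has a benign sign typo for even $m$. Since every application in the paper takes $m\in\{1,3\}$, nothing downstream is affected, but it is worth noting rather than asserting the match.
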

Corollary~\ref{cor:tech} is sufficiently strong to recover \eqref{eq:GS} and yield Propositions~\ref{prop:babypoisson} and \ref{prop:omega}, and \eqref{eq:binompoisson}. The following two lemmas are needed for going beyond the range of \eqref{eq:GS}.
\begin{lem}\label{lem:tech1}
			Let $H(z) = \sum_{k \ge 0} a_k z^k$ be a power series with $a_k \in \CC$. Given $\lambda >0$ we write $H$ as \[ H(z) =\exp( \lambda z^2/2)H_1(z).\]
Given $k\ge 1$ we let $r=\sqrt{k/\lambda}$. We have
	\[ k! a_k = \AbsMom_k\lambda^{k/2}\left( \mathbf{1}_{2 \mid k} \frac{H_1(r) +H_1(-r)}{2} + \mathbf{1}_{2 \nmid k} \frac{H_1(r)-H_1(-r)}{2}(1+O(1/k)) + O( E/k)\right)\]
	as long as $H$ converges absolutely for $|z|= r$, where
	\begin{equation}\label{eq:Edef}
		E :=  r\max_{|z|= r}|H_1'(z)|+ r^2\max_{|z|= r} |H''_1(z)|,
	\end{equation}
 and all implied constants are absolute. 
\end{lem}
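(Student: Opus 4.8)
The plan is to extract $a_k$ from $H(z)=\exp(\lambda z^2/2)H_1(z)$ by Cauchy's formula on the circle $|z|=r$ with $r=\sqrt{k/\lambda}$, which is precisely the radius where the factor $\exp(\lambda z^2/2)$ has a saddle point relative to $z^{-k}$. Writing
\[
k!\,a_k = \frac{k!}{2\pi i}\oint_{|z|=r}\frac{H(z)}{z^{k+1}}\,\diff z
= \frac{k!}{2\pi i}\oint_{|z|=r}\frac{e^{\lambda z^2/2}H_1(z)}{z^{k+1}}\,\diff z,
\]
I would parametrize $z=re^{i\theta}$ and note that $e^{\lambda z^2/2}z^{-k}=r^{-k}\exp(\tfrac{\lambda r^2}{2}(e^{2i\theta}-2i\theta\cdot\tfrac{k}{\lambda r^2}\cdot\ldots))$; more cleanly, $\lambda z^2/2 - k\log z$ has real part $\tfrac{k}{2}\cos 2\theta - k\log r$ and the phase is stationary (as a function of $\theta$) at $\theta=0$ and $\theta=\pi$. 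So the integral localizes near these two antipodal points, contributing $H_1(r)$ and $H_1(-r)$ respectively, with the $(-1)^k$-type weight coming from $z^{-k}$ evaluated near $z=-r$. The even/odd split in the statement is exactly the statement that $H_1(r)+H_1(-r)$ survives when $k$ is even and $H_1(r)-H_1(-r)$ survives when $k$ is odd; in the odd case the leading saddle contribution cancels to first order, which is the source of the extra $1+O(1/k)$ correction factor rather than a clean main term.

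Concretely, I would first treat the pure Gaussian case $H_1\equiv 1$: then $k!\,a_k$ is (up to the normalization) the $k$th moment-type coefficient of $\exp(\lambda z^2/2)$, which by a direct computation equals $\mu_k\lambda^{k/2}$ for even $k$ and $0$ for odd $k$, and one checks $\mu_k\asymp \AbsMom_k$ via the duplication formula for $\Gamma$; this calibrates the constant $\AbsMom_k\lambda^{k/2}$ in front. For general $H_1$, I would Taylor-expand $H_1$ around $z=\pm r$ to second order, $H_1(z)=H_1(\pm r)+H_1'(\pm r)(z\mp r)+\tfrac12 H_1''(\xi)(z\mp r)^2$, insert this into the contour integral, and bound the contribution of the linear and quadratic terms. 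The linear and quadratic terms contribute $O(r\max_{|z|=r}|H_1'(z)|+r^2\max_{|z|=r}|H_1''(z)|)=O(E)$ after dividing by the saddle's Gaussian width $\asymp 1/\sqrt{k}$ in $\theta$ (equivalently $\asymp r/\sqrt{k}$ in $z$), which explains the $O(E/k)$ error and the factor $r$ accompanying the derivatives. The part of the circle away from $\theta=0,\pi$ is negligible because $\cos 2\theta<1$ there, giving exponential-in-$k$ savings; this is where absolute convergence of $H$ on $|z|=r$ is used, to control $H_1$ uniformly on the arc.

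The main obstacle I anticipate is the odd case. For even $k$ the two saddles reinforce and a standard Laplace/saddle-point estimate immediately gives the $(H_1(r)+H_1(-r))/2$ main term with a power-saving error. For odd $k$, however, the would-be main term from $z=r$ and $z=-r$ cancels at leading order (this is the analytic shadow of the fact that odd Gaussian moments vanish), so one must retain the next term in the expansion of the phase $\lambda z^2/2-k\log z$ around $\theta=0$ — i.e.\ go one order deeper in the saddle-point analysis — to see that the surviving contribution is of size $\AbsMom_k\lambda^{k/2}$ times $(H_1(r)-H_1(-r))/2$ rather than smaller, and to pin down the $1+O(1/k)$ factor. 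I would handle this by writing, for odd $k$, $z^{-k}=|z|^{-k}e^{-ik\arg z}$ and integrating by parts once in $\theta$ (or equivalently differentiating the Gaussian) to convert the leading cancellation into an explicit $1/k$ gain, then re-running the localization. The bookkeeping is routine once this cancellation mechanism is isolated; the rest is a matter of carefully tracking the normalizing constants against Corollary~\ref{cor:tech} and the known Gaussian moments $\mu_k$.
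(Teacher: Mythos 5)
Your overall plan coincides with the paper's: Cauchy's formula on $|z|=r=\sqrt{k/\lambda}$, localization near the two saddles $\theta=0,\pi$, a short Taylor expansion of $H_1$ to extract the derivative error term $E$, and exponential savings on the arc away from the saddles via $\cos 2\theta-1\le -c\theta^2$. The normalization via the pure-Gaussian case $H_1\equiv 1$ and the $(-1)^k$ bookkeeping near $z=-r$ are also exactly what the paper does (the paper organizes the two-saddle decomposition by folding $[-\pi,\pi]$ onto $[-\pi/2,\pi/2]$, but that is the same thing).

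However, your description of the odd case is based on a wrong premise. You say that for odd $k$ ``the would-be main term from $z=r$ and $z=-r$ cancels at leading order'' and you plan an integration by parts to turn this cancellation into a $1/k$ gain. No such cancellation occurs. For odd $k$ the two saddle contributions combine to $\tfrac12\bigl(H_1(r)-H_1(-r)\bigr)$, which is generically nonzero; the lemma's main term is exactly this quantity, not something of smaller order. The actual source of the asymmetry between even and odd $k$ is different: for even $k$ the central integral
\[
\int_{-\pi/2}^{\pi/2}\exp\!\bigl(k(e^{2i\theta}-1)/2\bigr)\,e^{-ik\theta}\,\diff\theta
\]
equals (up to normalization) the $k$th Maclaurin coefficient of $e^{\lambda z^2/2}$, which is known \emph{exactly}, so the main term $\tfrac12\bigl(H_1(r)+H_1(-r)\bigr)$ carries no correction factor. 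For odd $k$ that coefficient vanishes, so no exact identity is available and one must estimate the integral asymptotically; the standard Laplace expansion (one more term of the Taylor series of the phase, $k(e^{2i\theta}-1)/2 - ik\theta = -k\theta^2 - \tfrac{2i}{3}k\theta^3 + O(k\theta^4)$, combined with the vanishing of odd Gaussian integrals over symmetric intervals) gives $\sqrt{\pi/k}\bigl(1+O(1/k)\bigr)$, and that is the origin of the $1+O(1/k)$ factor. Your integration-by-parts step is therefore unnecessary, and the ``analytic shadow of vanishing odd Gaussian moments'' remark is misleading; what vanishes is the exact identity you have for even $k$, not the saddle contribution. With that correction your outline matches the paper's proof.

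A small further remark: the paper's writeup avoids the awkwardness of a Lagrange-remainder-style expansion of the analytic function $H_1$ around $\pm r$ by instead expanding in the real parameter $\theta$, namely $H_1(\pm re^{i\theta})=H_1(\pm r)\pm i\theta r H_1'(\pm r)+O(E\theta^2)$; this is cleaner to justify for complex-analytic $H_1$ and leads directly to the stated $E$. You should also be aware that the linear term in this expansion contributes at the same order $E/k$ as the quadratic one, because its leading Gaussian integral $\int e^{-k\theta^2}\theta\,\diff\theta$ vanishes by parity, so the power saving you attribute to it is one order better than a naive count suggests.
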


\begin{lem}\label{lem:technical2}
	Fix $A>0$. Let $H(z) = \sum_{k \ge 0} a_k z^k$ be a power series converging for $|z|\le A$. Given $\lambda >0$ and a power series $H_1(z)$ converging in $|z| \le A$, we write $H$ as \[ H(z) =\exp( \lambda(e^z-z-1))(H_1(z)+H_2(z)).\]
	Given $k \ge 1$ we let $r$ be the positive solution to $r(e^r-1)=k/\lambda$. For $1 \le k \le \lambda A(e^A-1)$ we have $r \asymp_A\sqrt{k/\lambda}$ and
	\[ k! a_k = \lambda^{k/2}\frac{\AbsMom_k}{2} \frac{\exp(kS)}{\sqrt{s}} ( H_1(r) + O_A ( E + \max_{|z|= r}|H_2(z)| ))\]
	where
\[E:=(|H_1(r)|+ r\max_{|z|= r}|H_1'(z)|+ r^2\max_{|z|= r} |H''_1(z)|) \cdot( k^{-1} + \exp(-c_A k^{3/2}\lambda^{-1/2}))\]
and $S=S_1+S_2$, $s=bt$ and
		\begin{align*}
		b &:=\frac{1}{2}\left( e^r + \frac{e^r-1}{r}\right) = 1 + O_A(r),\qquad t:=\frac{r}{e^r-1} =1+O_A(r),\\
		S_1 &: =  t  \frac{e^r-\frac{r^2}{2}-r-1}{r^2} = \frac{ r}{6}(1+O_A(r))  \asymp_A (k/\lambda)^{1/2},\\
		S_2 &:= \frac{1}{2}(t-1 - \log t)= \frac{r^2}{16}(1+O_A(r))\asymp_A k/\lambda
	\end{align*}
	and the implied constants and $c_A>0$ depend only on $A$.
\end{lem}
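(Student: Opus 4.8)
The plan is to extract $a_k$ from $H$ by Cauchy's formula, $a_k = \frac{1}{2\pi i}\oint H(z)z^{-k-1}\diff z$, taken over a circle $|z|=\rho$, and to choose $\rho$ as the saddle point of the dominant factor $\exp(\lambda(e^z-z-1))z^{-k}$. Differentiating $\lambda(e^z-z-1)-k\log z$ and setting the derivative to zero gives $\lambda(e^\rho-1)=k/\rho$, i.e.\ exactly the equation $r(e^r-1)=k/\lambda$ defining $r$; so I take $\rho=r$. The claim $r\asymp_A\sqrt{k/\lambda}$ for $1\le k\le\lambda A(e^A-1)$ is elementary: the map $u\mapsto u(e^u-1)$ is increasing, vanishes to second order at $0$, and on the relevant bounded interval $[0,A]$ one has $u(e^u-1)\asymp_A u^2$, so inverting gives $r\asymp_A\sqrt{k/\lambda}$. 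First I would record this, together with the stated Taylor expansions of $b$, $t$, $S_1$, $S_2$ around $r=0$, which are routine once one has the closed forms; note $b,t=1+O_A(r)$ and $s=bt=1+O_A(r)$, and that $S_1\asymp_A r\asymp_A(k/\lambda)^{1/2}$, $S_2\asymp_A r^2\asymp_A k/\lambda$.

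Next I would carry out the saddle-point analysis on the main circle $|z|=r$, parametrizing $z=re^{i\theta}$. Write $\phi(z)=\lambda(e^z-z-1)-k\log z$. One computes $\phi(r)$, and checks that $k S = \phi(r) - \frac k2\log(k/\lambda)$-type bookkeeping produces the factor $\lambda^{k/2}\AbsMom_k\exp(kS)/(2\sqrt{s})$ after one also accounts for the Gaussian-integral normalization; concretely, the second derivative $\phi''(r)$ governs the width, and $r^2\phi''(r) = k\,b/t\cdot(\text{const})$, which is where $s=bt$ enters under the square root. The contribution of $H_1(z)$ near $\theta=0$ is $H_1(r)$ to leading order; the $z\mapsto -z$ symmetry of the problem (visible in Lemmas~\ref{lem:tech0}--\ref{lem:tech1}) contributes a secondary saddle near $\theta=\pi$ whose size relative to the main one is $\exp(\lambda(e^{-r}-e^r+2r))\asymp_A\exp(-c_A r^3\lambda) = \exp(-c_A k^{3/2}\lambda^{-1/2})$, and for even/odd $k$ this either adds or cancels — matching the $\mathbf 1_{2\mid k}/\mathbf 1_{2\nmid k}$ dichotomy implicit in $\AbsMom_k/2$ versus $\mu_k$, and here folded into the error term $E$. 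The tails $\theta$ bounded away from $0$ and $\pi$ are exponentially smaller and absorbed into $E$ as well; the first and second derivative bounds $\max_{|z|=r}|H_1'|$, $\max_{|z|=r}|H_1''|$ enter when one replaces $H_1(z)$ by $H_1(r)$ plus error on the arc of length $\asymp 1/\sqrt{k}$ around the saddle (a second-order Taylor estimate for $H_1$), producing the $r\max|H_1'|+r^2\max|H_1''|$ terms times $k^{-1}$. The term $\max_{|z|=r}|H_2(z)|$ is trivial: $H_2$ is just carried along multiplicatively and bounded by its maximum on the contour, contributing $O_A(\lambda^{k/2}\AbsMom_k\exp(kS)s^{-1/2}\max_{|z|=r}|H_2|)$.

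I would organize this as: (i) the saddle equation and the estimates $r\asymp_A\sqrt{k/\lambda}$ plus the Taylor expansions; (ii) reduction of $\oint$ to a short arc near $z=r$ plus a short arc near $z=-r$ plus negligible tails, using that $\mathrm{Re}\,\phi$ is maximized at these two points on the circle; (iii) Laplace/Gaussian evaluation of each arc, extracting $\lambda^{k/2}\AbsMom_k\exp(kS)/(2\sqrt s)$ and the factor $H_1(\pm r)$, with the $-r$ arc down by $\exp(-c_Ak^{3/2}\lambda^{-1/2})$; (iv) collecting errors into $E$ and $\max|H_2|$. The main obstacle is step (ii)--(iii): one must show that $\mathrm{Re}(\lambda(e^{re^{i\theta}}) - k\log(re^{i\theta}))$ as a function of $\theta$ has exactly the two competing maxima at $0$ and $\pi$ with a genuine quadratic well at each, uniformly for all $k\le\lambda A(e^A-1)$ (so $r$ can be as large as $\asymp_A 1$, not just small), and to control the Taylor error of $\exp(\lambda(e^z-z-1))$ on the arc — i.e.\ that $\lambda$ times the cubic-and-higher terms of $e^z$ along the arc of width $\asymp 1/\sqrt{\lambda r^2/t}\asymp_A 1/\sqrt k$ is $O(1)$, which is exactly the regime $k\le A\lambda$ and is what makes the saddle-point approximation legitimate. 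This uniformity in the size of $r$ is the delicate point; for small $r$ everything degenerates smoothly to the Gaussian regime of Lemma~\ref{lem:tech1}, which is a useful consistency check.
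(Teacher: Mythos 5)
Your proposal follows essentially the same route as the paper: Cauchy's formula on $|z|=r$ with $r$ chosen as the saddle of $\exp(\lambda(e^z-z-1))z^{-k}$ (so $r(e^r-1)=k/\lambda$), Taylor expansion of $H_1$ to second order on the arc, a Laplace/Gaussian evaluation near $\theta=0$ producing $\lambda^{k/2}\AbsMom_k\exp(kS)/(2\sqrt{s})$ via Stirling, and uniform bounds on $\Re\bigl(\lambda(e^{re^{i\theta}}-re^{i\theta}-1)-(e^r-r-1)\bigr)$ to kill the rest of the circle — the paper's key estimate is that this real part is $\le -cr^2\theta^2$ for $|\theta|\le 3\pi/4$ and $\le -c(r^3+r^2(|\theta|-\pi)^2)$ near $\theta=\pi$, which yields precisely your $\exp(-c_Ak^{3/2}\lambda^{-1/2})$ tail. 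One small correction: in this lemma there is no even/odd add-or-cancel from the $\theta=\pi$ region — that phenomenon lives in Lemma~\ref{lem:tech1} — here the contribution near $\theta=\pi$ is simply bounded in absolute value by $\exp(-c\lambda r^3)$ and absorbed into $E$ regardless of the parity of $k$, and the factor $\AbsMom_k/2$ arises purely from Stirling and the normalization $s=bt$, not from any parity dichotomy.
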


\subsection{Proof of Propositions~\ref{prop:babypoisson}--\ref{prop:comp}}
Let $\lambda>0$ and $X(\lambda)\sim \mathrm{Poisson}(\lambda)$. The moment generating function of $X(\lambda)-\lambda$ is 
\[ \sum_{k \ge 0 } \frac{z^k}{k!} \EE (X(\lambda)-\lambda)^k =  \EE e^{ z (X(\lambda)-\lambda)} =\sum_{i \ge 0} \lambda^i \frac{e^{-\lambda}}{i!} e^{z(i-\lambda)} = e^{ \lambda(e^z-z-1)}.\]
Corollary~\ref{cor:tech} and Lemmas~\ref{lem:tech1}-\ref{lem:technical2} estimate the $k$th coefficient of the above series, multiplied by $k!$ (this is exactly the $k$th moment):
\begin{itemize}
	\item To prove Proposition~\ref{prop:babypoisson} we apply Corollary~\ref{cor:tech} with $H(z)=e^{\lambda(e^z-z-1)}$ and  $m=1$ (if $k$ is even) or $m=3$ (if $k$ is odd).	Under this choice of $H$, $H_1(z)=e^{\lambda(e^z-z^2/2-z-1)}=1+\lambda z^3/3!+\ldots$.
	\item To prove Proposition~\ref{prop:po} we apply Lemma~\ref{lem:tech1} with $H(z)=e^{\lambda(e^z-z-1)}$.
	\item To prove Proposition~\ref{prop:po2} we apply Lemma~\ref{lem:technical2} with  $H(z)=e^{\lambda(e^z-z-1)}$, $H_1\equiv 1$ and $H_2\equiv 0$.
	\item To prove Proposition~\ref{prop:comp} we consider different ranges. If $k=o(\lambda^{1/3})$, we apply Proposition~\ref{prop:babypoisson} twice, with $\lambda$ and $\lambda'$, and compare. If $k \asymp \lambda^{1/3}$, we apply Proposition~\ref{prop:po} twice, with $\lambda$ and $\lambda'$, and compare. If $k /\lambda^{1/3}\to \infty$ we 	apply Lemma~\ref{lem:technical2} twice and compare: first with $H(z)=e^{\lambda'(e^z-z-1)}$, $H_1\equiv 1$ and $H_2\equiv 0$, and then with  $H(z)=e^{\lambda(e^z-z-1)}$, $H_1(z)=e^{(\lambda-\lambda')(e^z-z-1)}$ and $H_2\equiv 0$.
\end{itemize}
\subsection{Proof of Propositions~\ref{prop:omega}--\ref{prop:kac2}}
The moment generating function of $\omega(n)-\log \log x$, where $n$ is chosen uniformly at random from $[1,x]\cap\ZZ$, is
\[	M(z):=\sum_{k \ge 0}\frac{z^k}{k!} \EE_{n \le x} (\omega(n)-\log \log x)^k=\EE_{n \le x} e^{z(\omega(n)-\log \log x)}.\]
For $\Shift\in \RR$, we introduce
\begin{equation}\label{eq:gens2}
	M_{\Shift}(z):=\sum_{k \ge 0}\frac{z^k}{k!} \EE_{n \le x} (\omega(n)-\log \log x-\Shift)^k=\EE_{n \le x} e^{z(\omega(n)-\log \log x-\Shift)}=e^{-z\Shift}M(z).
\end{equation}
We shall need the following result of Selberg \cite[Thm.~2]{Selberg}: uniformly for $|s| \le B$ and $x \ge 3$ we have
\begin{equation}\label{eq:selberg}
	\EE_{n \le x} s^{\omega(n)} = (\log x)^{s-1}(F(s) + O_B( 1/\log x))
\end{equation}
for the $F$ in \eqref{eq:Fdef}. Substituting $s=e^z$ in \eqref{eq:selberg}, we obtain that
\begin{equation}\label{eq:MMA}
	M_{\Shift}(z) = e^{-z\Shift}(\log x)^{e^z-z-1}(F(e^z) + G(z))
\end{equation}
where $G(z)= O_C(1/\log x)$ for $|z|\le C$. First, we show that $G(z)$ makes a negligible contribution to the moments, by proving a version of \eqref{eq:D} with an explicit dependence on $k$.
\begin{cor}\label{cor:effectdelange}
Fix $A>0$. If  $1\le k\le A \log \log x$ and $-A \le \Shift \le A$ then
\[		\EE_{n \le x}\bigg(\frac{\omega(n)-\log \log x-\Shift}{\sqrt{\log \log x}} \bigg)^k =\frac{k!A_{k,\Shift}(x)}{(\log \log x)^{k/2}}+O_A\bigg(\frac{\AbsMom_k}{\log x}\exp\bigg(C_A \frac{k^{3/2}}{\sqrt{\log \log x}}\bigg)\bigg)\]
holds where $A_{k,\Shift}(x)$ is the coefficient of $z^k$ in $e^{-z\Shift}F(e^z)(\log x)^{e^z-z-1}$, and the implied constant and $C_A$ depend only on $A$.
\end{cor}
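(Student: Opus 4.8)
The plan is to isolate, inside the moment generating function $M_{\Shift}$, the part coming from the error term $G$ in Selberg's formula \eqref{eq:MMA}, and to bound its Taylor coefficients directly via Lemma~\ref{lem:tech0}. Write $M_{\Shift}(z) = e^{-z\Shift}(\log x)^{e^z-z-1}F(e^z) + H(z)$ with $H(z) := e^{-z\Shift}(\log x)^{e^z-z-1}G(z)$. By definition $A_{k,\Shift}(x) = [z^k]\big(e^{-z\Shift}(\log x)^{e^z-z-1}F(e^z)\big)$, so $\EE_{n\le x}(\omega(n)-\log\log x-\Shift)^k = k![z^k]M_{\Shift}(z) = k!A_{k,\Shift}(x) + k![z^k]H(z)$; after dividing by $(\log \log x)^{k/2}$ it remains only to show that $k![z^k]H(z) \ll_A \AbsMom_k (\log\log x)^{k/2}(\log x)^{-1}\exp\big(C_A k^{3/2}(\log \log x)^{-1/2}\big)$. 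This is exactly a version of \eqref{eq:D} with explicit dependence on $k$.

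Before applying Lemma~\ref{lem:tech0} I would check that $H$ (equivalently $G$) is entire, so that its power series converges absolutely on every circle. Indeed $M_{\Shift}(z) = e^{-z\Shift}\EE_{n\le x}e^{z(\omega(n)-\log\log x)}$ is a finite sum of exponentials, while $(\log x)^{e^z-z-1}=\exp((\log\log x)(e^z-z-1))$ and $F(e^z)$ are entire (using that $F$ is entire and $\log\log x>0$ for $x\ge 3$), so $G(z)=M_{\Shift}(z)e^{z\Shift}(\log x)^{-(e^z-z-1)}-F(e^z)$ and hence $H$ are entire. Now apply Lemma~\ref{lem:tech0} to $H$ with $\lambda=\log\log x$ and $r=\sqrt{k/\log\log x}$; note $r\le\sqrt A$ since $k\le A\log\log x$. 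Writing $H(z)=\exp((\log\log x)z^2/2)H_1(z)$ and using $(\log x)^{e^z-z-1}=\exp((\log\log x)z^2/2)\exp((\log\log x)(e^z-z-1-z^2/2))$, we get $H_1(z)=e^{-z\Shift}\exp((\log\log x)(e^z-z-1-z^2/2))G(z)$.

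The last step is to bound $H_1$ on $|z|=r$. There $|G(z)|\ll_A 1/\log x$ (this is $G(z)=O_C(1/\log x)$ with $C=\sqrt A$), $|e^{-z\Shift}|\le e^{|\Shift|r}\ll_A 1$, and since $|e^z-z-1-z^2/2|\le\sum_{m\ge 3}|z|^m/m!\le\tfrac{|z|^3}{6}e^{|z|}\ll_A r^3$ we obtain $(\log\log x)|e^z-z-1-z^2/2|\ll_A (\log\log x)r^3=k^{3/2}/\sqrt{\log\log x}$, hence $|\exp((\log\log x)(e^z-z-1-z^2/2))|\le\exp\big(C_A k^{3/2}/\sqrt{\log\log x}\big)$ for a suitable $C_A>0$. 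Therefore $\max_{|z|=r}|H_1(z)+(-1)^kH_1(-z)|\ll_A (\log x)^{-1}\exp\big(C_A k^{3/2}/\sqrt{\log\log x}\big)$, and Lemma~\ref{lem:tech0} gives the required bound on $k![z^k]H(z)$. The argument has no genuine obstacle; the only points demanding any care are the (routine) verification that $G$ is entire, so that the absolute-convergence hypothesis of Lemma~\ref{lem:tech0} is met, and the elementary Taylor estimate $e^z-z-1-z^2/2=O_A(|z|^3)$ on $|z|=r$, which is exactly what manufactures the $\exp\big(C_A k^{3/2}/\sqrt{\log\log x}\big)$ factor — harmless once multiplied by $1/\log x$.
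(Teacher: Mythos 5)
Your proof is correct and follows essentially the same route as the paper: both decompose $M_{\Shift}$ via Selberg's formula, both apply Lemma~\ref{lem:tech0} to $H(z)=e^{-z\Shift}(\log x)^{e^z-z-1}G(z)$ with $\lambda=\log\log x$, and both close with the two estimates $G(z)=O_A(1/\log x)$ and $e^z-z-1-z^2/2=O_A(|z|^3)$ on $|z|\le\sqrt A$. The explicit verification that $G$ is entire (so the convergence hypothesis of Lemma~\ref{lem:tech0} is met) is a sound addition that the paper leaves implicit.
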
 
\begin{proof}
Comparing coefficients in \eqref{eq:gens2} and \eqref{eq:MMA}, we have 
\[		\EE_{n \le x}\bigg(\frac{\omega(n)-\log \log x-\Shift}{\sqrt{\log \log x}} \bigg)^k =\frac{k!A_{k,\Shift}(x)}{(\log \log x)^{k/2}}+k!b_k\]
where $b_k$ is the coefficient of $z^k$ in $e^{-z\Shift}(\log x)^{e^z-z-1}G(z)$, divided by $(\log \log x)^{k/2}$. It remains to bound $k!b_k$. This is done by applying Lemma~\ref{lem:tech0} with $\lambda=\log \log x$ and $H(z) = e^{-z\Shift}(\log x)^{e^z-z-1}G(z)$, and using the estimates $G(z)=O_A(1/\log x)$ and $e^z-(z^2/2+z+1)=O_A(|z|^3)$ which hold for $|z|\le \sqrt{A}$.
\end{proof}
Since the error term in Corollary~\ref{cor:effectdelange} is negligible compared to the error terms in Proposition~\ref{prop:omega}--\ref{prop:kac2}, it remains to estimate $k!$ times $A_{k,\Shift}(x)$, the coefficient of $z^k$ in $e^{-z\Shift}F(e^z)(\log x)^{e^z-z-1}$.
\begin{itemize}
	\item To prove Proposition~\ref{prop:omega} we apply Corollary~\ref{cor:tech} with $\lambda=\log \log x$, $H(z)=e^{-z\Shift}F(e^z)(\log x)^{e^z-z-1}$ and $m=1$ (if $k$ is even) or $m=\min\{3,k\}$ (if $k$ is odd).
	In the notation of Corollary~\ref{cor:tech}, $b_0=1$, $b_1=F'(1)-\Shift=\MMc-\Shift$ and $b_3=\lambda/6 + O_A(1)$.
	\item To prove Proposition~\ref{prop:kac} we apply Lemma~\ref{lem:tech1} with $\lambda=\log \log x$ and $H(z)=e^{-z\Shift}F(e^z)(\log x)^{e^z-z-1}$.
\item To prove Proposition~\ref{prop:kac2} we apply Lemma~\ref{lem:technical2} with $\lambda=\log \log x$, $H(z)=e^{-z\Shift}F(e^z)(\log x)^{e^z-z-1}$, $H_1(z)=e^{-z\Shift}F(e^z)$ and $H_2\equiv 0$. 
\end{itemize}
\subsection{Proof of Proposition~\ref{prop:pb}}
We use the notation of Proposition~\ref{prop:pb}, and in particular $\lambda=\sum_{i=1}^{n}p_i$. The moment generating function of $Y-\lambda$ is
\[ M_{Y}(z):=\sum_{k \ge 0}\frac{z^k}{k!} \EE (Y-\lambda)^k=\EE e^{z (Y-\lambda)}=\prod_{i=1}^{n} \EE e^{z(Y_i-p_i)} = e^{-\lambda z} \prod_{i=1}^{n} (1+p_i(e^z-1)).\]
To prove the first part of the proposition we apply Corollary~\ref{cor:tech} with $H=M_Y$ and $m=1$ (if $k$ is even) or $m=3$ (if $k$ is odd). 
In the notation of Corollary~\ref{cor:tech}, $b_0=1$, $b_1=0$, $b_2 =-\Lambda/2$ and $b_3=\lambda/6 + O(\Lambda)$.
To prove the second part of the proposition we apply Lemma~\ref{lem:tech1} with $H=M_Y$. To prove the last part we apply Lemma~\ref{lem:technical2} with $H=M_Y$, $H_1(z) =  \prod_{i=1}^{n} (1+p_i(e^z-1))/e^{p_i(e^z-1)}$ and $H_2 \equiv 0$.
\section{Proofs of Generating function lemmas}\label{sec:lemmasproofs}
\subsection{Proof of Lemma \ref{lem:tech0}}
For a $2\pi$-periodic function $f$ we have
\begin{equation}\label{eq:period}
\int_{-\pi}^{\pi} f(\theta)\diff\theta = \int_{-\pi/2}^{\pi/2} (f(\theta)+f(\theta+\pi))\diff\theta.
\end{equation}
Recall $r=\sqrt{k/\lambda}$. Stirling's approximation shows that
\begin{equation}\label{eq:stir}
	\frac{e^{k/2}}{2\pi r^k} = \frac{\lambda^{k/2}\sqrt{\pi k}}{2\pi 2^{k/2} \Gamma\left( \frac{k}{2}+1\right)}(1+O(1/k)) = \frac{\lambda^{k/2}\AbsMom_k}{k!}\frac{1+O(1/k)}{2\sqrt{\pi/k}}.
\end{equation}
We need to show that the coefficient of $z^k$ in $k! \exp( \lambda z^2/2)H_1(z)$ is $\ll \lambda^{k/2} \AbsMom_k  \max_{|z|=r}|H_1(z)|$. This coefficient can be written as
\begin{equation}\label{eq:rep}
\begin{split}
\frac{k!}{2\pi r^k} &\int_{-\pi}^{\pi}\exp( \lambda r^2 e^{2i\theta}/2) H_1(re^{i\theta})e^{-i\theta k} \diff\theta \\
&=  \frac{k!\exp( \lambda r^2/2)}{2\pi r^k} \int_{-\pi/2}^{\pi/2} \exp( \lambda r^2(e^{2i\theta}-1)/2)(H_1(re^{i\theta})+(-1)^k H_1(-re^{i\theta}))e^{-i\theta k}\diff\theta
\end{split}
\end{equation}
by \eqref{eq:period}. Recall $\lambda r^2 = k$. By the triangle inequality and \eqref{eq:stir}, the right-hand side of \eqref{eq:rep} is
\begin{equation}\label{eq:rep2}
\begin{split}
& \ll \frac{k! e^{k/2}}{2\pi r^k} \max_{|z|=r} |H_1(z)+(-1)^k H_1(-z)|  \int_{-\pi/2}^{\pi/2} \exp( \Re (k(e^{2i\theta}-1)/2))\diff\theta\\
& \ll \lambda^{k/2} \AbsMom_k k^{1/2}\max_{|z|=r} |H_1(z)+(-1)^k H_1(-z)|  \int_{-\pi/2}^{\pi/2} \exp( \Re (k(e^{2i\theta}-1)/2))\diff\theta.
\end{split}
\end{equation} Uniformly for $\theta \in [-\pi/2,\pi/2]$ we have 
\begin{equation}\label{eq:cosineq}
	\Re (e^{2i\theta}-1) = \cos(2\theta)-1 \le -c\theta^2
\end{equation}
for some absolute constant $c>0$ (we may take $c=8/\pi^2$) and so the right-hand side of \eqref{eq:rep2} is
\begin{align*}
&\ll \lambda^{k/2} \AbsMom_k k^{1/2}\max_{|z|=r} |H_1(z)+(-1)^k H_1(-z)|  \int_{-\pi}^{\pi}e^{ -ck \theta^2} \diff\theta \\
&= \lambda^{k/2} \AbsMom_k k^{1/2}\max_{|z|=r} |H_1(z)+(-1)^k H_1(-z)| (2ck)^{-1/2} \int_{-\pi\sqrt{2ck}}^{\pi\sqrt{2ck}} e^{-t^2/2} \diff t\\
& \ll  \lambda^{k/2} \AbsMom_k \max_{|z|=r} |H_1(z)+(-1)^k H_1(-z)| 
\end{align*}
as needed, where in the equality we substituted $\theta=t/\sqrt{2ck}$. 
\subsection{Proof of Corollary \ref{cor:tech}}
We write
\[ k!a_k = k!(a_{k,1}+k!a_{k,2})\]
where $a_{k,1}$ (resp.~$a_{k,2}$) is the coefficient of $z^n$ in \[\exp(\lambda z^2/2)\sum_{j=0}^{m} b_j z^j \qquad (\text{resp.~}\exp(\lambda z^2/2)(H_1(z) -\sum_{j=0}^{m} b_j z^j)).\] Since $\exp(\lambda z^2/2)$ is the generating function of $(\lambda^{j/2} \mu_j/j!)_{j \ge 0}$, we may evaluate $a_{k,1}$ exactly as 
\[ a_{k,1} =\sum_{j=0}^{m} b_j \lambda^{(k-j)/2} \frac{\mu_{k-j}}{(k-j)!}.\]
If $m=k$ we are done since $a_{k,2}=0$ by construction. When $m\le k-1$, we appeal to Lemma~\ref{lem:tech0} in order to bound $k!a_{k,2}$. Precisely, we write $k! a_{k,2}$ as $k!/(k-m-1)!$ times $(k-m-1)!a_{k,2}$, and we bound $(k-m-1)!a_{k,2}$ by considering $a_{k,2}$ as the coefficient of $z^{k-m-1}$ in $\exp(\lambda z^2/2)$ times $(H_1(z) -\sum_{j=0}^{m} b_j z^j)/z^{m+1}$. The bound we obtain is
\[ k!a_{k,2}\ll \lambda^{\frac{k}{2}} \bigg(  \frac{k!}{(k-m-1)!}  \lambda^{-\frac{m+1}{2}} \AbsMom_{k-m-1} \max_{|z|=r} \big|\frac{H_1(z)-\sum_{j=0}^m  b_j z^j + (-1)^k(H_1(-z)-\sum_{j=0}^{m} b_j (-z)^j )}{z^{m+1}}\big|\bigg)\]
where $r=\sqrt{(k-m-1)/\lambda}$. We simplify the bound using the fact that $\max_{|z|=r}|\tfrac{F(z)-\sum_{i=0}^{m} F^{(i)}(0)z^i/i!}{z^{m+1}}|$ is bounded by $\max_{|z|=r} |\tfrac{F^{(m+1)}(z)}{(m+1)!}|$ for any power series $F$ converging absolutely in $|z|=r$.
\subsection{Proof of Lemma~\ref{lem:tech1}}
Recall $r=\sqrt{k/\lambda}$. We express $k!a_k$ using \eqref{eq:period} as
\begin{equation}\label{eq:akR1R2}
\begin{split}
k!a_k &=  \frac{k!}{2\pi r^{k}} \int_{-\pi}^{\pi} H_1(re^{i\theta})e^{-i\theta k} \diff\theta\\
&= \frac{k!}{2\pi r^{k}} \int_{-\frac{\pi}{2}}^{\frac{\pi}{2}}  \exp(k e^{2i\theta}/2)  ( H_1(re^{i\theta}) + (-1)^k H_1(-r e^{i\theta})) e^{-i\theta k} \diff\theta\\
&= \frac{k! e^{k/2}}{2\pi r^{k}} ( R_1+R_2)
\end{split}
\end{equation}
where
\begin{align*}
R_1 &:= \int_{-\frac{\pi}{2}}^{\frac{\pi}{2}}  \exp(k (e^{2i\theta}-1)/2)  (H_1(re^{i\theta})-H_1(r)+(-1)^k (H_1(-re^{i\theta})-H_1(-r))) e^{-i\theta k}\diff\theta,\\
R_2 &:= (H_1(r)+(-1)^k H_1(-r))\int_{-\frac{\pi}{2}}^{\frac{\pi}{2}}  \exp(k(e^{2i\theta}-1)/2)   e^{-i\theta k}\diff\theta.
\end{align*}
We study $R_1$ by treating separately $\pi/2 \ge |\theta| \ge k^{-1/3}$  and $|\theta| \le k^{-1/3}$. To bound the contribution of large $|\theta|$ we use \eqref{eq:cosineq}:
\begin{align*}
\int_{\pi/2 \ge |\theta| \ge k^{-1/3}}  &\exp(k (e^{2i\theta}-1)/2)  (H_1(re^{i\theta})-H_1(r)+(-1)^k(H_1(-re^{i\theta})-H_1(-r))) e^{-i\theta k}\diff\theta  \\
&\ll ( \max_{|z|=r} |H_1(z)-H_1(r)|+\max_{|z|=r} |H_1(z)-H_1(-r)|) \int_{\pi/2 \ge |\theta| \ge k^{-1/3}}  e^{-ck \theta^2} \diff\theta\\
& \ll r \max_{|z|=r} |H_1'(z)| e^{-ck^{1/3}}.
\end{align*}
To study small $\theta$ we Taylor-expand $e^{2i\theta}-1$ as $2i\theta -2\theta^2 -4i\theta^3/3 + O(\theta^4)$ to find
\[ \exp(k(e^{2i\theta}-1)/2 - i\theta k )= e^{-k\theta^2} (1 -2ik\theta^3/3 + O(k^2\theta^6 + k\theta^4))\]
for $|\theta| \le k^{-1/3}$. We also expand $H_1(\pm re^{i\theta})-H_1(\pm r)$ as
\[ H_1(\pm re^{i\theta})-H_1(\pm r) = \pm ir H_1'(\pm r)\theta + O(E\theta^2)\]
where $E$ is defined in \eqref{eq:Edef}. It follows that
\begin{align*}
	&\int_{|\theta| \le k^{-\frac{1}{3}}}  \exp(k (e^{2i\theta}-1)/2)  (H_1(re^{i\theta})-H_1(r)+(-1)^k(H_1(-re^{i\theta})-H_1(-r))) e^{-i\theta k}\diff\theta  \\
	&= \int_{|\theta|\le k^{-\frac{1}{3}}} e^{-k\theta^2} ( 1-2ik\theta^3/3 + O(k^2 \theta^6 + k\theta^4)) \theta ( ir H_1'(r)- (-1)^kir H_1'(-r) + O(E |\theta|)) \diff\theta\\
	&= \frac{1}{2k}\int_{|t|\le \sqrt{2}k^{\frac{1}{6}}} e^{-\frac{t^2}{2}} ( 1-i(2k)^{-1/2} t^3/3 + O(t^6/k + t^4/k)) t ( ir H_1'(r)- (-1)^kir H_1'(-r) + O(E |t|k^{-1/2})) \diff t
\end{align*}
where in the last equality we substituted $\theta = t/\sqrt{2k}$. The integral $\int_{|t| \le A} t^je^{-t^2/2}\diff t$ vanishes for odd $j$ and is equal to $\sqrt{2\pi}\mu_j + O( e^{-c A^2})$ for $0 \le j \le 8$. Hence
\begin{align*}
	\int_{|\theta| \le k^{-\frac{1}{3}}}  &\exp(k (e^{2i\theta}-1)/2)  (H_1(re^{i\theta})-H_1(r)+(-1)^k(H_1(-re^{i\theta})-H_1(-r))) e^{-i\theta k}\diff\theta  \\
&=  \frac{r}{2k}(H_1'(r)-(-1)^k H_1'(-r))(\sqrt{2\pi} (2k)^{-1/2} + O(k^{-1}))+O(E k^{-3/2}). 
\end{align*}
We have shown that
\begin{equation}\label{eq:R1bnd}
R_1 =\frac{r}{2k}(H_1'(r)-(-1)^k H_1'(-r))(\sqrt{\pi/k}  + O(k^{-1}))+O(E k^{-3/2} + r\max_{|z|=r} |H'_1(z)|  e^{-ck^{1/3}})\ll E k^{-3/2}
\end{equation}
holds where the inequality follows from the definition of $E$. We now treat $R_2$. If $k$ is even, we can evaluate $R_2$ exactly. Indeed, because the $k$th coefficient of $e^{\lambda z^2/2}$ is $\lambda^{k/2} 2^{-k/2}/(k/2)! = \lambda^{k/2} \mu_k/k! = \lambda^{k/2} \AbsMom_k/k!$ when $k$ is even we see
\begin{multline*}
	\lambda^{k/2}\frac{\AbsMom_k}{k!}  =\frac{1}{2\pi r^k} \int_{-\pi}^{\pi} \exp(\lambda r^2 e^{2i\theta}/2) e^{-i\theta k} \diff\theta  =\frac{1}{\pi r^k} \int_{-\pi/2}^{\pi/2} \exp(k e^{2i\theta}/2) e^{-i\theta k} \diff\theta\\
	 =\frac{e^{k/2}}{\pi r^k} \int_{-\pi/2}^{\pi/2} \exp(k (e^{2i\theta}-1)/2) e^{-i\theta k} \diff\theta
\end{multline*}
by \eqref{eq:period}, so
\[ \frac{e^{k/2}}{2\pi r^k} R_2 = \frac{H_1(r)+H_1(-r)}{2}\frac{\AbsMom_k}{k!}\lambda^{k/2}.\]
If $k$ is odd, we can estimate $R_2$ in the same way we studied $R_1$: by Taylor-expanding $k(e^{2i\theta}-1)/2$ (for $|\theta|\le k^{-1/3}$) and using \eqref{eq:cosineq} (for $|\theta|\ge k^{-1/3}$), resulting in
\[ R_2 = (H_1(r)-H_1(-r)) \sqrt{\frac{\pi}{k}}(1+O(1/k)).\]
From \eqref{eq:akR1R2}, \eqref{eq:R1bnd}, our estimates for $R_2$ and \eqref{eq:stir}, the proof is concluded.

\subsection{Proof of Lemma \ref{lem:technical2}}
We have, by assumption,
\begin{align*}
	a_k&= \frac{1}{2\pi r^{k}} \int_{-\pi}^{\pi} H(re^{i\theta}) e^{-ik\theta}\diff \theta\\
	&= \frac{1}{2\pi r^{k}} \int_{-\pi}^{\pi} \exp(\lambda(e^{re^{i\theta}}-re^{i\theta}-1)) e^{-i\theta k} ( H_1(re^{i\theta}) + H_2(re^{i\theta}))\diff\theta\\
	&=  \frac{\exp(\lambda(e^r-r-1))}{2\pi r^k} \int_{-\pi}^{\pi} e^{\lambda P_r(\theta)} e^{-i\theta k} ( H_1(re^{i\theta}) + H_2(re^{i\theta}))\diff\theta
\end{align*}
where \[P_r(\theta):= e^{r e^{i\theta}}-re^{i\theta}-1-(e^r -r -1)=\sum_{j\ge 2}\frac{r^j (e^{i\theta j}-1)}{j!}.\] We Taylor-expand $H_1$ as 
\[	H_1(re^{i\theta}) =H_1(r) + i\theta r H_1'(r) + O_A( E_1\theta^2),\qquad E_1:=r\max_{|z|= r}|H_1'(z)|+ r^2\max_{|z|= r} |H''_1(z)|.\] This expansion of $H_1$ shows that
\[ a_k = \frac{\exp(\lambda(e^{r}-r-1))}{2\pi r^k} (H_1(r) I_0 + i rH_1'(r)I_1 + O_A( E_1 J_1 + \max_{|z|= r}|H_2(z)| J_0))\]
holds, where 
\[ I_j:=  \int_{-\pi}^{\pi} e^{\lambda P_r(\theta)}e^{-i\theta k}\theta^{j} \diff\theta, \qquad  J_j :=  \int_{-\pi}^{\pi} e^{\lambda \Re P_r(\theta)} \theta^{2j} \diff\theta \]
for $j=0,1$. It remains to estimates $I_j, J_j$. We claim that uniformly for $|\theta|\le \pi$ we have
\begin{equation}\label{eq:reh}
	\Re P_r(\theta) \le -c\cdot  \begin{cases} r^2\theta^2 & \text{if }|\theta| \le 3\pi/4, \\ r^3 + r^2 ( |\theta|-\pi)^2& \text{if }\pi \ge |\theta| \ge 3\pi/4,\end{cases}
\end{equation}
for an absolute, positive constant $c>0$ ($c=1/30$ works, say). Indeed, \[\Re P_r(\theta)=\Re \sum_{j\ge 2}\frac{r^j (e^{i\theta j}-1)}{j!} = - \sum_{j\ge 2}\frac{r^j (1-\cos(\theta j))}{j!}\]
and by considering only $j=2$ and $j=3$ we obtain \eqref{eq:reh}. Using \eqref{eq:reh} we see that
\[ J_j \ll (\lambda r^2 )^{-\frac{1}{2}-j} + (\lambda r^2 )^{-\frac{1}{2}} \exp(-(c/2)\lambda r^3)\]
holds for $j=0,1$. Before studying $I_j$, 
recall that, as stated in the lemma, we take $r$ to be the positive solution to \[r(e^r-1)=\frac{k}{\lambda}.\]
This ensures that \[  P'_r(0) = i\frac{k}{\lambda}\]
holds. Recall that we assume $k/\lambda \le A(e^A-1)$, so that in particular $r\le A$. Let
\[ T:=\min\{\pi,(\lambda r^2 )^{-1/3},(\lambda r^2)^{-1/4}\}.\]
It is useful to note that $r \asymp_A \sqrt{k/\lambda}$ because $k/\lambda = r(e^r-1)\asymp_A r^2$ and in particular $T \asymp_A k^{-1/3}$. 
To study $I_j$ we consider separately $|\theta|\le T$ and $|\theta|\in [T,\pi]$. To bound the contribution of $|\theta| \in [T,\pi]$ we use \eqref{eq:reh} and the triangle inequality to find that
\begin{align*}
\bigg| \int_{|\theta| \in [T,\pi]} e^{\lambda P_r(\theta)}e^{-i\theta k}\theta^{j} \diff\theta\bigg|& \ll \int_{T}^{\pi}\exp(-c\lambda r^2 \theta^2)\theta^j\diff \theta  + (\lambda r^2 )^{-\frac{1}{2}}\exp(-c\lambda r^3)\\
& \ll (\lambda r^2)^{-\frac{1+j}{2}} \exp(-(c/2)\lambda r^2 T^2)+ (\lambda r^2 )^{-\frac{1}{2}}\exp(-c\lambda r^3)
\end{align*}
for $j=0,1$, where we used the standard estimate
\[ \int_{B}^{\infty} e^{-t^2/2}t^j\diff t \ll e^{-B^2/4}\]
for $j=0,1$ and $B \ge 0$. For $\theta \in [-T,T]$ we use the Taylor approximation
\[	P_r(\theta) - i\theta \frac{k}{\lambda} = r^2 \bigg( - \frac{ b_2}{2} \theta^2 + b_3 \theta^3 +b_4 \theta^4\bigg) \]
where
\begin{align*}
	b_{2} &= \sum_{j\ge 2} \frac{r^{j-2} j^{2}}{j!}= e^r + \frac{e^r-1}{r},\qquad  b_3 = -\frac{i}{6}\sum_{j\ge 2} \frac{r^{j-2} j^{3}}{j!},\\
	|b_4| &= \bigg|\sum_{j \ge 2} \frac{r^{j-2} (e^{i\theta j}-(1+i\theta j+(i\theta j)^2/2! + (i\theta j)^3/3!)) }{ \theta^4 j!} \bigg|\ll \sum_{j \ge 2} \frac{r^{j-2}j^4}{j!}.
\end{align*}
Note that $b_2\ge 2$ and $|b_2|,|b_3|,|b_4|=O_A(1)$.
We find that
\begin{align*}
	I_j &=\int_{-T}^{T} \exp(-\lambda b_2 r^2  \theta^2/2)( 1 + b_3 \theta^3 \lambda r^2  + O_A( \theta^6 \lambda^2 r^4  + \theta^4 \lambda r^2 ))\theta^j \diff \theta\\
	&\qquad +O( (\lambda r^2)^{-\frac{1+j}{2}} \exp(-(c/2)\lambda r^2 T^2)+ (\lambda r^2 )^{-\frac{1}{2}}\exp(-c \lambda r^3))
\end{align*}
for $j=0,1$. Since $\int_{-B}^{B} e^{-t^2/2}t^j \diff t=0$ for odd $j$, and $\int_{-B}^{B} e^{-t^2/2}t^j\diff t = \sqrt{2\pi}\mu_j +O(e^{-B^2/4})$ for $0 \le j \le 6$, we may simplify our estimate for $I_j$ as
\begin{align*}
	I_0&= \frac{1}{\sqrt{b_2 \lambda r^2 }} ( \sqrt{2\pi} + O_A( (\lambda r^2)^{-1} +\exp(-b_2 r^2\lambda T^2/4 ))) \\
	&\qquad+ O_A\left( \frac{\exp(-(c/2)\lambda r^2 T^2) + \exp(-c\lambda r^3)	}{\sqrt{b_2 \lambda r^2}} \right)\\
	&= \frac{\sqrt{2\pi}}{\sqrt{b_2\lambda r^2}}  (1 + O_A( 1/k  + \exp(-c_Ak^{3/2}\lambda^{-1/2})))
\end{align*}
when $j=0$ and 
\[ I_1 \ll_A \frac{1}{\sqrt{b_2\lambda r^2}} ( 1/k  + \exp(-c_Ak^{3/2}\lambda^{-1/2}))\]
when $j=1$. In summary, 
\[ a_k = \frac{\exp(\lambda(e^r - r - 1))}{\sqrt{2\pi b_2 \lambda r^2 }r^k}  (H_1(r)+O_A( (1 /k +  \exp(-c_Ak^{3/2}\lambda^{-1/2}))(E_1+ |H_1(r)|) +\max_{|z|= r}|H_2(z)|	)).\]
We simplify the main term using Stirling's formula applied to $\Gamma(k/2+1)$ as follows:
\[ 	\frac{\exp(\lambda(e^r - r - 1))}{\sqrt{2\pi b_2 \lambda r^2 }r^k} =  \frac{\lambda^{k/2}}{2^{k/2}\Gamma(k/2 + 1)}\frac{\exp(\lambda(e^r-r-1))}{\sqrt{2b_2 (\lambda r^2 /k)}} (k/(e\lambda r^2))^{k/2}(1+O(1/k)).\]
Letting $t:=\lambda r^2 /k = r/(e^r-1)$, we can write
\[	\exp(\lambda(e^r-r-1))(k/(e\lambda r^2))^{k/2} = \exp\bigg( \frac{k}{2} \bigg( 2t \frac{e^r-\frac{r^2}{2}-r-1}{r^2} +t-1 - \log t\bigg)\bigg),\]
giving the result.
\section{Origin of transition at \texorpdfstring{$k\asymp (\log \log x)^{1/3}$}{k ~ log log x  1/3}}\label{sec:intuition}
Let $\lambda>0$ and $X(\lambda) \sim \mathrm{Poisson}(\lambda)$. We explain intuitively the transition in the $k$th moment of $X(\lambda)-\lambda$ (resp.~$\omega(n)-\log \log x$) once $k\asymp \lambda^{1/3}$ (resp.~$k\asymp (\log \log x)^{1/3}$). We begin by stating three informal claims on $G\sim N(0,1)$ and $(X(\lambda)-\lambda)/\sqrt{\lambda}$.
\begin{claim}\label{claim:ga} 
We have $\EE (G^{2k}) \sim \EE (G^{2k} \mathbf{1}_{G=O(\sqrt{k})})$. This is false if $O(\sqrt{k})$ is replaced with $o(\sqrt{k})$. 
\end{claim}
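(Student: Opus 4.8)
The plan is to read everything off the closed form $\EE(G^{2k})=(2k-1)!!$ together with the picture that the Gaussian integral $\tfrac{2}{\sqrt{2\pi}}\int_0^{\infty}t^{2k}e^{-t^2/2}\diff t$ has integrand $\exp(2k\log t-t^2/2)$ sharply peaked at $t=\sqrt{2k}$ with a peak of width $O(1)$. Here ``$G=O(\sqrt k)$'' is read as $|G|\le C\sqrt k$ for a fixed constant $C$, and ``$G=o(\sqrt k)$'' as $|G|\le\psi(k)$ for some $\psi$ with $\psi(k)/\sqrt k\to0$. Both halves of the claim assert that $|G|\asymp\sqrt k$ is exactly the right window, and I would obtain them from crude pointwise and reverse-Markov inequalities, with no need for a full Laplace expansion.

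For the positive statement, I would fix a constant $C>2$ and observe that on the event $|G|>C\sqrt k$ one has $G^{-2k}<(C^2k)^{-k}$, so pointwise $G^{2k}\mathbf{1}_{|G|>C\sqrt k}\le(C^2k)^{-k}G^{4k}$. Taking expectations and using the exact identity $\EE(G^{4k})/\EE(G^{2k})=(2k+1)(2k+3)\cdots(4k-1)$ (a product of $k$ odd integers, each $<4k$), which is $\le(4k)^k$, yields
\[\EE\big(G^{2k}\mathbf{1}_{|G|>C\sqrt k}\big)\le(C^2k)^{-k}\,\EE(G^{4k})\le(4/C^2)^k\,\EE(G^{2k})=o\big(\EE(G^{2k})\big),\]
hence $\EE(G^{2k}\mathbf{1}_{|G|\le C\sqrt k})\sim\EE(G^{2k})$.

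For the negative statement, given $\psi(k)=o(\sqrt k)$ I would use the trivial bound $G^{2k}\mathbf{1}_{|G|\le\psi(k)}\le\psi(k)^{2k}$ together with Stirling's estimate $\EE(G^{2k})=(2k-1)!!\asymp(2k/e)^k$ to get
\[\frac{\EE\big(G^{2k}\mathbf{1}_{|G|\le\psi(k)}\big)}{\EE(G^{2k})}\ll\Big(\frac{e\,\psi(k)^2}{2k}\Big)^k\longrightarrow0,\]
since $\psi(k)^2/k\to0$ drives the base below $\tfrac12$ for large $k$; thus the truncated moment is in fact $o(\EE(G^{2k}))$. The only thing requiring care — and the closest thing to an obstacle — is the size of the implied constant in ``$O(\sqrt k)$'': because the integrand concentrates at $t=\sqrt{2k}$, truncating at $c\sqrt k$ with $c\le\sqrt2$ already discards a positive proportion of the mass (roughly half, by symmetry of the peak), so one genuinely needs $c>\sqrt2$; the reverse-Markov bound above comfortably allows any $c>2$, and this constant-chasing is essentially the whole difficulty.
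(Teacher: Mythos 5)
Your proof is correct and rigorizes what the paper leaves as a one-line informal observation. The paper simply notes that $t\mapsto t^{2k}e^{-t^2/2}$ peaks at $t=\sqrt{2k}$ (increasing before, rapidly decreasing after) and lets the reader draw the two conclusions; you make both halves precise with elementary tools that are worth having on record. For the upper tail you use the ``reverse-Markov'' trick $G^{2k}\mathbf{1}_{|G|>C\sqrt k}\le (C^2k)^{-k}G^{4k}$ together with the exact ratio $\EE(G^{4k})/\EE(G^{2k})=(2k+1)(2k+3)\cdots(4k-1)\le(4k)^k$, which cleanly bounds the tail by $(4/C^2)^k\,\EE(G^{2k})$ without any Laplace-method integral estimate; this does require $C>2$ rather than the sharp $C>\sqrt2$, but that is harmless for an $O(\sqrt k)$ statement, and you correctly flag that the genuine threshold is $\sqrt2$. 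For the lower truncation you combine the trivial pointwise bound $G^{2k}\mathbf{1}_{|G|\le\psi(k)}\le\psi(k)^{2k}$ with Stirling's estimate $(2k-1)!!\asymp(2k/e)^k$, which is exactly the quantitative form of ``the integrand is increasing on $[0,\sqrt{2k}]$.'' Net effect: same underlying picture as the paper (concentration at $\sqrt{2k}$), but your version replaces the integral-shape heuristic with closed-form moment identities and Stirling, which is a fine and arguably more self-contained route for an informal claim of this kind.
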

\begin{claim}\label{claim:po} Let $\lambda > 0$. When $k=O(\lambda)$ we have
	 \[\EE\bigg( \bigg( \frac{X(\lambda)-\lambda}{\sqrt{\lambda}}\bigg)^{2k} \mathbf{1}_{\big|\frac{X(\lambda)-\lambda}{\sqrt{\lambda}}\big| =O(\sqrt{k})} \bigg) \sim  \EE\bigg( \bigg( \frac{X(\lambda)-\lambda}{\sqrt{\lambda}}\bigg)^{2k}\bigg).\]
	 This is false if $O(\sqrt{k})$ is replaced with $o(\sqrt{k})$. 
\end{claim}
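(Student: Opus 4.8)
The plan is to locate, via Stirling's approximation, the value of $j$ at which the summand of $\EE(X(\lambda)-\lambda)^{2k}=\sum_{j\ge 0}(j-\lambda)^{2k}\PP(X(\lambda)=j)$ is largest, to show this summand is unimodal in $j$ with a peak of width $\asymp\sqrt\lambda$, and then to conclude by a Laplace-type argument — exactly as one argues Informal Claim~\ref{claim:ga} from the fact that $x^{2k}e^{-x^2/2}$ peaks at $x=\sqrt{2k}$.

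Concretely, first I would write $j=\lambda+m$ and use $\log j!=j\log j-j+\tfrac12\log(2\pi j)+O(1/j)$ to get
\[ \log\big((j-\lambda)^{2k}\PP(X(\lambda)=j)\big)=f(m)+O(\log(\lambda+|m|)),\qquad f(m):=2k\log|m|-\lambda g(m/\lambda),\]
where $g(v)=(1+v)\log(1+v)-v\ge 0$ is the Poisson rate function. Since $f''(m)=-2k/m^{2}-(\lambda+m)^{-1}<0$, $f$ is strictly concave on each of $m>0$ and $-\lambda<m<0$, hence has a unique interior maximizer on each side. On the right, $f'(m)=0$ reads $m\log(1+m/\lambda)=2k$, which on substituting $m=\lambda(e^{r}-1)$ becomes precisely $r(e^{r}-1)=2k/\lambda$, the defining relation of $r$ in Lemma~\ref{lem:technical2} (with $k$ replaced by $2k$). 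So the peak sits at $m^{*}=\lambda(e^{r}-1)$, i.e. at $(X(\lambda)-\lambda)/\sqrt\lambda=\sqrt\lambda(e^{r}-1)$; the left peak is handled identically and lies at $|m|=\lambda(1-e^{-r'})$ with $r'(1-e^{-r'})=2k/\lambda$.

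Next I would invoke the bound $r\asymp_{A}\sqrt{k/\lambda}$ from Lemma~\ref{lem:technical2}, valid throughout $k\le A\lambda$; it gives $e^{r}-1\asymp_{A}r\asymp_{A}\sqrt{k/\lambda}$, hence $m^{*}/\sqrt\lambda=\sqrt\lambda(e^{r}-1)\asymp_{A}\sqrt k$, with the same two-sided conclusion for the left peak (where $1-e^{-r'}\asymp_A r'\asymp_A\sqrt{k/\lambda}$). Thus both peaks of the moment summand lie at $|(X(\lambda)-\lambda)/\sqrt\lambda|\asymp\sqrt k$. A short computation gives $f''(m^{*})\asymp_{A}-1/\lambda$, so the peak has width $\asymp\sqrt\lambda$ in $m$, i.e. width $\asymp 1$ in the normalized variable — negligible next to $\sqrt k$ once $k\to\infty$. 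Laplace summation then shows that truncating to $|(X(\lambda)-\lambda)/\sqrt\lambda|\le C\sqrt k$, for any $C$ exceeding the implied constant in $m^{*}/\sqrt\lambda\asymp_A\sqrt k$, alters $\EE(X(\lambda)-\lambda)^{2k}$ by a factor $1+o(1)$; that is the first assertion. For the second, restricting to $|(X(\lambda)-\lambda)/\sqrt\lambda|=o(\sqrt k)$ cuts the sum off strictly to the left of both peaks, where $f$ is still increasing, and concavity gives $f(m^{*})-f(\varepsilon\sqrt{k\lambda})\gtrsim_{A}|f''(m^{*})|\,(m^{*})^{2}\asymp_A k\to\infty$ for any fixed $\varepsilon$ below the relevant constant; so the truncated moment is exponentially smaller and the $\sim$ fails (for bounded $k$, where $o(\sqrt k)$ must be read as $\lambda\to\infty$, this is immediate from the central limit theorem $(X(\lambda)-\lambda)/\sqrt\lambda\xrightarrow{d}G$).

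The main obstacle is making the Laplace estimate uniform up to $k\asymp\lambda$: there $r=\Theta_{A}(1)$ is bounded but not small, the Gaussian approximation $\PP(X(\lambda)=j)\approx e^{-m^{2}/(2\lambda)}/\sqrt{2\pi\lambda}$ is useless, and one genuinely needs the Stirling form of the Poisson pmf on a window of width $\asymp\lambda$ about the mean, plus a check that the two interior peaks dominate the full sum — including the contribution of small $j$, where $(j-\lambda)^{2k}=\lambda^{2k+o(1)}$ is large but $\PP$ is only $e^{-\lambda}$; this is a brief separate estimate, and in any case it is harmless when $k\asymp\lambda$ since then the entire support $\{j\ge 0\}$ already lies inside the truncation window. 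Rather than redo this real-variable analysis, the cleanest rigorous route is to feed the truncation into the contour-integral representation of the moment and reuse the saddle-point bounds of Lemma~\ref{lem:technical2} (equivalently Proposition~\ref{prop:po2}), whose error terms already certify the $o(1)$ losses.
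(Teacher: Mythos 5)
Your argument is correct in spirit and takes a genuinely different route from the paper's. The paper justifies Informal Claim~\ref{claim:po} by replacing the Poisson pmf with its Gaussian approximation $e^{-j^2/(2\lambda)}/\sqrt{2\pi\lambda}$, turning the truncated Poisson moment into a truncated Gaussian moment and then appealing to Informal Claim~\ref{claim:ga} together with the (earlier-established) comparability $\EE(G^{2k})\asymp\EE((X(\lambda)-\lambda)^{2k}/\lambda^{k})$; this is heuristic and, as written, only genuinely meaningful in the regime $k=O(\lambda^{1/3})$ where the Gaussian approximation of the pmf and the $\asymp$-comparison of full moments both hold. You instead work directly with the exact Stirling form of the Poisson pmf, isolate the concave exponent $f(m)=2k\log|m|-\lambda g(m/\lambda)$, locate its two interior critical points, and identify the right one as sitting at $m^{*}=\lambda(e^{r}-1)$ with $r(e^{r}-1)=2k/\lambda$, i.e.~exactly the saddle parameter of Lemma~\ref{lem:technical2} (for exponent $2k$); the bound $r\asymp_{A}\sqrt{k/\lambda}$ then gives $m^{*}/\sqrt{\lambda}\asymp\sqrt{k}$, and $f''(m^{*})\asymp-1/\lambda$ gives a peak of normalized width $\asymp 1$, from which both halves of the claim follow by a Laplace estimate. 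This buys uniformity over the full range $k\le A\lambda$, where the paper's Gaussian-approximation shortcut degrades, and it also makes transparent the link to the saddle $r$ that underlies the rigorous Propositions. Your closing observations — that one must separately dominate the contribution near $j=0$, and that the cleanest rigorous route is to reuse the contour-integral bounds of Lemma~\ref{lem:technical2} / Proposition~\ref{prop:po2} — are exactly the right caveats; like the paper's sketch, yours leaves the final Laplace summation and the uniformity in $k\asymp\lambda$ at the level of a plausible outline, which is appropriate for an informal claim.
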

\begin{claim}\label{claim:stirling}
For $j \in \ZZ$ and $\lambda >0$ let $I=[j,j+1)$ and $R = j/\sqrt{\lambda}$. When $j=O(\sqrt{\lambda})$ we have
	\begin{align}\label{eq:stirling1}
		\PP(G \in I)&\approx e^{-\lambda R^2/2},\\
		\label{eq:stirling2}	\PP((X(\lambda)-\lambda)/\sqrt{\lambda} \in I)&\approx  e^{-\lambda ((1+R)\log(1+R)-R)}		
	\end{align}
	The probabilities \eqref{eq:stirling1} and \eqref{eq:stirling2} are asymptotic to one another if and only if $\lambda R^3=o(1)$, i.e.~if $j=o(\lambda^{1/6})$. Once $j\gg \lambda^{1/6}$, \eqref{eq:stirling1} is \textit{much larger} than \eqref{eq:stirling2}.
\end{claim}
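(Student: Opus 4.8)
The plan is to obtain each of the two displayed estimates directly from Stirling's formula and then compare the resulting exponents by a single Taylor expansion. Since this is an informal claim, I read ``$\approx$'' as equality up to a factor polynomial in $\lambda$ and $|j|+1$ (negligible on the exponential scale), and ``asymptotic to one another''/``much larger'' as assertions about the ratio of the two right-hand sides of \eqref{eq:stirling1}--\eqref{eq:stirling2}; the hypothesis $j=O(\sqrt\lambda)$ is used only to keep $R=j/\sqrt\lambda$ in a fixed compact subinterval of $(-1,\infty)$.

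For \eqref{eq:stirling1} I would use the elementary tail estimate $\int_j^{j+1}e^{-t^2/2}\,dt\asymp e^{-\min(|j|,|j+1|)^2/2}/(|j|+1)$: for $j\ge 0$ this is $e^{-j^2/2}$ up to a polynomial factor, and for $j<0$ the same holds after replacing $j$ by the endpoint nearer $0$, which alters the exponent only by $O(|j|)=o(j^2)$; combined with $j^2=\lambda R^2$ this gives \eqref{eq:stirling1}. For \eqref{eq:stirling2} the event $(X(\lambda)-\lambda)/\sqrt\lambda\in[j,j+1)$ is $X(\lambda)\in[\lambda(1+R),\lambda(1+R)+\sqrt\lambda)$, a run of $\asymp\sqrt\lambda$ consecutive integers; for each such $n$ Stirling gives $\PP(X(\lambda)=n)=\frac{e^{-\lambda}\lambda^n}{n!}=\frac{1}{\sqrt{2\pi n}}\exp\!\bigl(-\lambda((1+u)\log(1+u)-u)\bigr)(1+O(1/n))$ with $u=(n-\lambda)/\lambda\in[R,R+\lambda^{-1/2})$. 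On this short range the exponent is monotone in $u$ (its $u$-derivative is $\log(1+u)$), so summing over $n$ reduces, up to a bounded factor, to the extreme term, and $\lambda((1+u)\log(1+u)-u)$ swings by only $O(\sqrt\lambda\,|\log(1+R)|)=o(\lambda R^2)$ across the run; this yields \eqref{eq:stirling2}.

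It then remains to compare the exponents $\lambda R^2/2$ and $\lambda\bigl((1+R)\log(1+R)-R\bigr)$. For $R$ in a compact subinterval of $(-1,\infty)$,
\[
(1+R)\log(1+R)-R-\tfrac{R^2}{2}=-\tfrac{R^3}{6}+\tfrac{R^4}{12}-\cdots=-\tfrac{R^3}{6}\bigl(1+O(R)\bigr),
\]
so the ratio \eqref{eq:stirling1}$/$\eqref{eq:stirling2} is $\exp\!\bigl(-\tfrac{\lambda R^3}{6}(1+O(R))\bigr)$. Since $\lambda R^3=j^3/\sqrt\lambda$, this tends to $1$ precisely when $j^3=o(\sqrt\lambda)$, i.e.\ $|j|=o(\lambda^{1/6})$, and stays bounded away from $1$ otherwise --- this is the ``if and only if''. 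Once $|j|/\lambda^{1/6}\to\infty$ the exponent $-\lambda R^3/6$ diverges, with sign opposite to that of $j$: for $j<0$ (deep in the left tail, where the Poisson/arithmetic distribution is far thinner than the Gaussian) it is positive, so \eqref{eq:stirling1} is much larger than \eqref{eq:stirling2}, as asserted; for $j>0$ it is negative and the inequality reverses, the heavier Poisson right tail being precisely the feature that pushes the even moments above their Gaussian values once $k\asymp\lambda^{1/3}$ (the $2k$th moment being carried by $|(X(\lambda)-\lambda)/\sqrt\lambda|\asymp\sqrt{2k}\asymp\lambda^{1/6}$ via Claims~\ref{claim:po} and~\ref{claim:ga}).

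The only genuine work is the Poisson interval estimate: one must confirm that a sum of $\asymp\sqrt\lambda$ point masses over a run where the Stirling exponent swings by $\asymp\sqrt\lambda$ is governed by its extreme term (the ratio of consecutive masses being $\exp(-\sqrt\lambda|\log(1+R)|+o(\sqrt\lambda))$), and that Stirling's relative error $O(1/n)$ and all implied constants stay uniform across the run and as $R$ approaches the ends of its compact range. Everything else is the one-line Taylor expansion above.
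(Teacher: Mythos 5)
Your proposal is correct and takes essentially the same route as the paper's justification: \eqref{eq:stirling1} from the Gaussian integral, \eqref{eq:stirling2} from Stirling's formula applied to the dominant term of the Poisson sum over the run of $\asymp\sqrt{\lambda}$ integers, and the comparison via the expansion $(1+R)\log(1+R)-R=\tfrac{R^2}{2}-\tfrac{R^3}{6}(1+O(R))$. Your extra care with signs is also right, and worth keeping: for $j>0$ with $j\gg\lambda^{1/6}$ it is the Poisson probability \eqref{eq:stirling2} that is much larger than \eqref{eq:stirling1} (the heavier Poisson right tail is exactly what the discussion in \S\ref{sec:intuition} invokes, ``whose probability is much larger than the probability that $G\asymp\lambda^{1/6}$''), so the final sentence of the claim as literally worded has the two probabilities interchanged unless $j$ is taken negative; your noting that the inequality reverses for $j>0$ corrects a slip in the statement rather than exposing a gap in your argument.
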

We now combine the claims. If $k=o(\lambda^{1/3})$ then the $2k$th moment of $(X(\lambda)-\lambda)/\sqrt{\lambda}$ is supported on the event $(X(\lambda)-\lambda)/\sqrt{\lambda} =o(\lambda^{1/6})$ by Informal Claim \ref{claim:po}. When $j=o(\lambda^{1/6})$, the probability that $(X(\lambda)-\lambda)/\sqrt{\lambda} \in [j,j+1)$ grows like the probability that $G\in [j,j+1)$, by \eqref{eq:stirling1}-\eqref{eq:stirling2}, so by Informal Claim \ref{claim:ga} the $2k$th Poisson moment must grow like the $2k$th Gaussian moment. However, once $k\gg \lambda^{1/3}$, the $2k$th moment has significant contribution from the event that $(X(\lambda)-\lambda)/\sqrt{\lambda} \asymp \lambda^{1/6}$, whose probability is much larger than the probability that $G \asymp \lambda^{1/6}$. The entirety of the discussion can be adapted to $\omega$, since Sath\'e \cite{Sathe} proved that
\[\PP_{n \le x}(\omega(n)-1=k) \sim \PP(X(\log \log x)=k)\]
holds for $k\sim \log \log x$ and 
\[\PP_{n \le x}(\omega(n)-1=k) \asymp \PP(X(\log \log x)=k)\]
holds for $k=O(\log \log x)$. We conclude that the $2k$th moment of $(\omega(n)-\log \log x)/\sqrt{\log \log x}$ deviates from  Gaussian once $k\gg (\log \log x)^{1/3}$ due to the density of integers with $(\omega(n) - \log \log x)/\sqrt{\log \log x} \asymp (\log \log x)^{1/6}$ being \textit{much larger} than the probability that $G \asymp (\log \log x)^{1/6}$.
These are atypical integers, but they contribute significantly to the $2k$th moment once $k\gg (\log \log x)^{1/3}$.

It remains to justify the claims.
For Informal Claim \ref{claim:ga}, write
\[ \EE (G^{2k} \mathbf{1}_{|G|\ge T})=
\frac{1}{\sqrt{2\pi}}\int_{|t|\ge T} t^{2k}e^{-t^2/2}\diff{t}\]
and observe that the even function $t\mapsto t^{2k} e^{-t^2/2}$ increases for $0\le t \le \sqrt{2k}$ and decreases (rapidly) for $t \ge \sqrt{2k}$. For Informal Claim \ref{claim:po}, recall we have shown $\EE (G^{2k}) \asymp \EE (((X(\lambda)-\lambda)/\sqrt{\lambda})^{2k})$, and use Stirling's approximation to see
\begin{multline*} \EE\bigg( \bigg( \frac{X(\lambda)-\lambda}{\sqrt{\lambda}}\bigg)^{2k} \mathbf{1}_{\big|\frac{X(\lambda)-\lambda}{\sqrt{\lambda}}\big| > T} \bigg) \approx e^{-\lambda} \sum_{|j|>T\sqrt{\lambda}} (j/\sqrt{\lambda})^{2k} \frac{\lambda^{\lambda+j}}{(\lambda+j)!} \\
	\approx \frac{1}{\sqrt{2\pi\lambda}}\sum_{|j|> T}(j/\sqrt{\lambda})^{2k} e^{-\frac{j^2}{2\lambda}} \approx \frac{1}{\sqrt{2\pi}} \int_{|t|\ge T} t^{2k}e^{t^2/2}\diff{t}
\end{multline*}
where $j \in \ZZ$ stands for values of $X(\lambda)-\lambda$. We turn to Informal Claim \ref{claim:stirling}. The last part of the claim follows by comparing \eqref{eq:stirling1} with \eqref{eq:stirling2}. To justify \eqref{eq:stirling1} we use
\[ \PP(G \in I) = \frac{1}{\sqrt{2\pi}}\int_{j}^{j+1} e^{-t^2/2}\diff{t} \approx e^{-j^2/2} = e^{-\lambda R^2/2}. \]
For \eqref{eq:stirling2} we use Stirling's approximation:
\[ 		\PP((X(\lambda)-\lambda)/\sqrt{\lambda} \in I)=  e^{-\lambda} \sum_{k \in [\lambda+\sqrt{\lambda}j,\lambda+\sqrt{\lambda}(j+1))} \frac{\lambda^{k}}{k!} \approx e^{-\lambda} \frac{\lambda^{\lambda+\sqrt{\lambda} j}}{(\lambda+\sqrt{\lambda}j)!}\approx  e^{-\lambda ((1+R)\log(1+R)-R)}.\]

\bibliographystyle{abbrv}
\bibliography{references}

\begin{thebibliography}{10}

\bibitem{LB}
R.~de~la Bret{\`e}che.
\newblock R{\'e}partition conjointe de trois nombres premiers et applications.
\newblock {\em arXiv preprint arXiv:2410.02480}, 2024.

\bibitem{Delange1953}
H.~Delange.
\newblock Sur le nombre des diviseurs premiers de {$n$}.
\newblock {\em C. R. Acad. Sci. Paris}, 237:542--544, 1953.

\bibitem{Delange}
H.~Delange.
\newblock Sur un th\'{e}or\`eme d'{E}rd\"{o}s et {K}ac.
\newblock {\em Acad. Roy. Belg. Bull. Cl. Sci. (5)}, 42:130--144, 1956.

\bibitem{Delange1959}
H.~Delange.
\newblock Sur des formules dues \`a {A}tle {S}elberg.
\newblock {\em Bull. Sci. Math. (2)}, 83:101--111, 1959.

\bibitem{DH}
H.~Delange and H.~Halberstam.
\newblock A note on additive functions.
\newblock {\em Pacific J. Math.}, 7:1551--1556, 1957.

\bibitem{Erdos}
P.~Erd\"{o}s and M.~Kac.
\newblock The {G}aussian law of errors in the theory of additive number
  theoretic functions.
\newblock {\em Amer. J. Math.}, 62:738--742, 1940.

\bibitem{Gallagher}
P.~X. Gallagher.
\newblock On the distribution of primes in short intervals.
\newblock {\em Mathematika}, 23(1):4--9, 1976.

\bibitem{Ghosh}
A.~Ghosh.
\newblock An extension of the method of moments for additive functions.
\newblock In {\em Hubert {D}elange colloquium ({O}rsay, 1982)}, volume 83-4 of
  {\em Publ. Math. Orsay}, pages 65--73. Univ. Paris XI, Orsay, 1983.

\bibitem{Granville2007}
A.~Granville and K.~Soundararajan.
\newblock Sieving and the {E}rd\H{o}s-{K}ac theorem.
\newblock In {\em Equidistribution in number theory, an introduction}, volume
  237 of {\em NATO Sci. Ser. II Math. Phys. Chem.}, pages 15--27. Springer,
  Dordrecht, 2007.

\bibitem{Halberstam}
H.~Halberstam.
\newblock On the distribution of additive number-theoretic functions.
\newblock {\em J. London Math. Soc.}, 30:43--53, 1955.

\bibitem{Halberstam1956}
H.~Halberstam.
\newblock On the distribution of additive number-theoretic functions. {II}.
\newblock {\em J. London Math. Soc.}, 31:1--14, 1956.

\bibitem{LeCam}
L.~Le~Cam.
\newblock An approximation theorem for the {P}oisson binomial distribution.
\newblock {\em Pacific J. Math.}, 10:1181--1197, 1960.

\bibitem{MS}
H.~L. Montgomery and K.~Soundararajan.
\newblock Primes in short intervals.
\newblock {\em Comm. Math. Phys.}, 252(1-3):589--617, 2004.

\bibitem{MV}
H.~L. Montgomery and R.~C. Vaughan.
\newblock {\em Multiplicative number theory. {I}. {C}lassical theory},
  volume~97 of {\em Cambridge Studies in Advanced Mathematics}.
\newblock Cambridge University Press, Cambridge, 2007.

\bibitem{radziwill2009large}
M.~Radziwi\l\l.
\newblock On large deviations of additive functions.
\newblock {\em arXiv preprint arXiv:0909.5274}, 2009.

\bibitem{radziwill2011structure}
M.~Radziwi\l\l.
\newblock A structure theorem in probabilistic number theory.
\newblock {\em arXiv preprint arXiv:1109.0033}, 2011.

\bibitem{Sathe}
L.~G. Sathe.
\newblock On a problem of {H}ardy on the distribution of integers having a
  given number of prime factors. {I}.
\newblock {\em J. Indian Math. Soc. (N.S.)}, 17:63--82, 1953.

\bibitem{Selberg}
A.~Selberg.
\newblock Note on a paper by {L}. {G}. {S}athe.
\newblock {\em J. Indian Math. Soc. (N.S.)}, 18:83--87, 1954.

\bibitem{Steele}
J.~M. Steele.
\newblock Le {C}am's inequality and {P}oisson approximations.
\newblock {\em Amer. Math. Monthly}, 101(1):48--54, 1994.

\bibitem{Turan}
P.~Tur\'{a}n.
\newblock On a {T}heorem of {H}ardy and {R}amanujan.
\newblock {\em J. London Math. Soc.}, 9(4):274--276, 1934.

\end{thebibliography}

\Addresses
\end{document}